\documentclass[12pt]{article}
\usepackage{pdfrender,xcolor}
\usepackage{graphicx,color}
\usepackage{amsmath,amsfonts,amssymb,amsthm}
\usepackage{mathabx}
\usepackage[hidelinks,colorlinks=true,linkcolor=blue,citecolor=blue]{hyperref}
\usepackage{rsfso}
\usepackage{comment}
\usepackage{enumerate}

\usepackage[numbers]{natbib}
\newcommand{\vc}[1]{{\boldsymbol #1}}

\newcommand{\sr}[1]{{\mathcal #1}}
\newcommand{\dd}[1]{\mathbb{#1}}

\newcommand{\ol}{\overline}
\newcommand{\ul}{\underline}
\newcommand{\os}{\overset}

\newcommand{\eq}[1]{(\ref{eq:#1})}
\newcommand{\lem}[1]{Lemma~\ref{lem:#1}}
\newcommand{\cor}[1]{Corollary~\ref{cor:#1}}
\newcommand{\thr}[1]{Theorem~\ref{thr:#1}}

\newcommand{\dfn}[1]{Definition~\ref{dfn:#1}}

\newcommand{\rem}[1]{Remark~\ref{rem:#1}}

\newcommand{\app}[1]{Appendix~\ref{app:#1}}
\newcommand{\sectn}[1]{Section~\ref{sec:#1}}

\newcommand{\lemt}[1]{\ref{lem:#1}}

\newcommand{\thrt}[1]{\ref{thr:#1}}

\newcommand{\sect}[1]{\ref{sec:#1}}

\newcommand{\pend}{\hfill \thicklines \framebox(6.6,6.6)[l]{}}

\newenvironment{proof*}[1]{\noindent {\sc  #1} \rm}{\pend}
\newenvironment{keyword}[1]{ {\bf #1} \rm}{}

\newtheorem{theorem}{Theorem}[section]
\newtheorem{lemma}{Lemma}[section]

\newtheorem{remark}{Remark}[section]

\newtheorem{corollary}{Corollary}[section]
\newtheorem{definition}{Definition}[section]

\newenvironment{mylist}[1]{\begin{list}{}
{\setlength{\itemindent}{#1mm}}
{\setlength{\itemsep}{0ex plus 0.2ex}}
{\setlength{\parsep}{0.5ex plus 0.2ex}}
{\setlength{\labelwidth}{10mm}}
}{\end{list}}

\newcommand{\setnewcounter} {
\setcounter{subsection}{0}
\setcounter{equation}{0}
\setcounter{figure}{0}
\setcounter{conjecture}{0}
\setcounter{assumption}{0}
\setcounter{conjecture}{0}
\setcounter{question}{0}
\setcounter{definition}{0}
\setcounter{theorem}{0}
\setcounter{corollary}{0}
\setcounter{lemma}{0}
\setcounter{proposition}{0}
\setcounter{remark}{0}
}

\begin{document}
 \title{\Large \bf Heavy-traffic limit of stationary distributions of\\ a state-dependent queue}

\author{Masahiro Kobayashi\\ Tokai University \and Masakiyo Miyazawa\\ Tokyo University of Science \and Yutaka Sakuma \\National Defense Academy of Japan}
\date{\today, R1-3\_12}

\maketitle

\begin{abstract}

Inspired by the work of \citet{AtarMiya2026} for a multi-level queue as well as applications to energy-saving problems, we are interested in the heavy-traffic limit of the stationary queue length distribution, which is not addressed in \cite{AtarMiya2026}. In this paper, we consider this heavy-traffic limit for the single server queue which has the most general possible state-dependence. Namely, arrival and service processes are generated by renewal processes, and their time scales, called speeds, may take any values depending on the queue length, not limited to specific values such as levels in the multi-level queue. Here, the terminology, heavy-traffic limit, stands for a diffusion-scaled limit in heavy-traffic for processes, distributions and modeling primitives.

This general model is referred to as a state-dependent queue. There are two motivations for this generalization. One is interest in the state-dependent queue itself because it allows finer control of service speed in application. Another is making it clear how the heavy-traffic limit is obtained under what conditions for the state-dependent queue.

Thus, we start to study basic properties of this state-dependent queue, including its stability. We then take the sequence of the stationary distributions of its diffusion scaled queue-length processes. We have three main results for this sequence. We first show that it is tight if the heavy-traffic limit of their drifts exists and is negative as the queue length goes to infinity, where a drift is the arrival speed minus the service speed. We next assume the condition that the limit of every vaguely convergent subsequence has a density, which is referred to as a density condition, and show that the heavy-traffic limit of the stationary distributions is obtained in a closed form if and only if that negative drift condition holds. We then show that the density condition is always satisfied for the multi-level queue, so the problem is nicely solved for the multi-level queue.

Our tool to prove those results is the BAR approach developed for the 2-level $GI/G/1$ queue in \citet{Miya2025}, but the present work {requires} many auxiliary results for this approach, which are of independent interest as they show what is needed to widen the applicability of the BAR approach.

\end{abstract}

\keyword{Keywords:}{single-server queue, state-dependent arrivals and services, multi-level queue, stationary distribution, heavy-traffic, diffusion approximation, Palm distribution.}


\section{Introduction}
\label{sec:introduction}
\setnewcounter

We are interested in the diffusion approximation of a single-server queue with an unlimited buffer whose arrival and service processes are queue-length dependent, motivated by the optimal control of service speed. For example, such control problems have been studied for service stations in communication networks. The diffusion approximation is known as a useful tool to study them because it {effectively} captures the variability of arrivals and service times, which is hard to study by a Markovian queue, that is, a model which {is described by a continuous-time Markov chain}.

Among such single-server queues, we are inspired by the multi-level queue introduced by \citet{AtarMiya2026}. This multi-level queue is a simpler version of the 2-level $GI/G/1$ queue studied in \cite{Miya2025}. It has a single server and finitely many disjoint intervals constituting a partition of $\dd{R}_{+} \equiv [0,\infty)$, and its service speed depends on such an interval to which the queue length belongs. Those intervals are referred to as level sets. This {service} dynamics is exactly the same as {in} the 2-level $GI/G/1$ queue. However, their arrival processes are different. In the multi-level queue of \cite{AtarMiya2026}, the arrival times of customers are generated by a single renewal process whose time scale, which is called an arrival speed, {changes} similarly to the service speed, while different renewal processes are chosen {depending} on the queue length in the 2-level $GI/G/1$ queue. This causes analytical difficulty in the study of the 2-level $GI/G/1$ queue, while the multi-level queue has much simpler dynamics but is still tractable for applications.

For the multi-level queue, it is shown in \cite{AtarMiya2026} that the heavy-traffic limit of the queue length process is a reflected diffusion with a drift and a diffusion coefficient which discontinuously change at the boundaries of level sets. These boundary states are called levels. However, the heavy-traffic limit of the stationary distribution of its queue length is not addressed in \cite{AtarMiya2026}. We are particularly interested in this distributional limit because of its importance in applications. Furthermore, we extend the multi-level queue so that it has countably many levels which are not accumulated at a finite point. We still call this extended model a multi-level queue for convenience.

In this paper, we study the heavy-traffic limit of the stationary queue length distribution for the single server queue whose arrival and service speeds have the most general possible state-dependence. Namely, arrival and service speeds may take any positive values depending on the queue length, not limited to specific values such as levels in the multi-level queue. There are two motivations for this generalization. One is interest in the state-dependent queue itself because it allows finer control of service speed in application. Another is making it clear how the heavy-traffic limit is obtained under what conditions for the state-dependent queue. In this paper, a heavy-traffic limit stands for a diffusion-scaled limit for processes, distributions and modeling primitives in heavy-traffic. 

Thus, our first task is to precisely describe the state-dependent queue, and study its basic properties, including its stability. We then take the sequence of the stationary distributions of its diffusion scaled queue-length processes, and introduce heavy-traffic conditions for them. Assuming these heavy traffic conditions, we consider the weak limit of this sequence, and have three main results in this paper, which are listed below.

\begin{enumerate}[i)]
\item  The sequence of those stationary distributions is tight if the heavy-traffic limit of their drifts exists and is negative as the diffusion scaled queue length goes to infinity, which is referred to as a \emph{negative drift condition}, where a drift is the arrival speed minus the service speed (see \thr{tight-2}). Here, tightness is known as a key condition for for the weak convergence of distributions.

\item We introduce the condition that the vague limit has a density for any vaguely convergent subsequence of the sequence of the stationary distributions, which is referred to as a \emph{density condition}. Assuming this density condition, we prove that the heavy-traffic limit of the stationary distributions exists and is obtained in a closed form if and only if the negative drift condition holds (see \thr{SDQ-main-1}).

\item For the multi-level queue, it is proved that the density condition always holds, and therefore its heavy-traffic limit of the stationary distributions is obtained if and only if the negative drift condition holds (see \thr{MLQ-main-1}).
\end{enumerate}

Our tool to prove those results is the BAR approach based on Palm distributions{,} which {was} developed for the 2-level $GI/G/1$ queue in \cite{Miya2025}, but we need many auxiliary results for applying this approach to the state-dependent queue. {These results are of independent interest as} they show what is needed to widen the applicability of the BAR approach.

As for the process limit for the state-dependent queue, no results are available except for {the multi-level queue of \cite{AtarMiya2026}}. Thus, it is interesting to study the heavy-traffic limit of the queue length process for the state-dependent queue. Intuitively, it must exist as a reflected diffusion, but its proof is quite technical even for the level-dependent queue in \cite{AtarMiya2026}, so we leave it in future study. Based on this process-limit and \thr{tight-2}, we conjecture that the density condition in \thr{SDQ-main-1} is not needed (see \rem{SDQ-main}).

This paper is {organized into} five sections. In \sectn{preliminaries}, we introduce the sequence of indexed state-dependent queues, present the heavy-traffic assumptions, and discuss a stationary framework including Palm distributions. The main results, \thr{tight-2}, Theorems \thrt{SDQ-main-1} and \thrt{MLQ-main-1}, are presented in \sectn{main}. Auxiliary results to prove these theorems are prepared in \sectn{asym-BAR}, and the main results are proved in \sectn{p-main}. In the Appendix, proofs of some lemmas are provided.

\section{Preliminaries and assumptions}
\label{sec:preliminaries}
\setnewcounter

This section consists of six subsections. In \sectn{framework}, we introduce a stochastic framework to be used in this paper, and discuss basic facts on vague and weak convergences. Then, in Sections \sect{state-dependent} and \sect{Markov}, we define a state-dependent queue and describe it by a Markov process. Here, {``}state-dependent{''} means that arrival and service speeds depend on the current queue length. This state-dependent queue can be considered as a generalization of the level-dependent queue of \cite{AtarMiya2026} in which the speeds are similarly changed but only at the time when the queue length crosses one of finitely many fixed levels.

We then present heavy-traffic conditions on this state-dependent queue for diffusion approximation in \sectn{HT-c}, and discuss its time evolution in \sectn{time-evolution}. Finally, in \sectn{stationary}, we introduce the stationary framework for the diffusion approximation of the stationary distribution, following \cite{Miya2024,Miya2025}.

We use standard notations such as $a \vee b = \max(a,b)$, $a \wedge b = \min(a,b)$ and {$a^{+} = \max(a, 0)$} for $a,b \in \dd{R}$, where $\dd{R}$ is the set of all real numbers. Similarly, let $\dd{R}_{+}, \dd{Z}_{+}$ and $\dd{N}$ be the sets of all nonnegative real numbers, all nonnegative integers and all positive integers, respectively. Denote {by $\sr{B}_{+}$} the Borel field on $\dd{R}_{+}$. As usual, all processes are assumed to be right-continuous with left limits unless stated otherwise. A function $f$ is also denoted by $f(\cdot)$ when emphasized to be a function. For a real-valued function $f$ on $\dd{R}_{+}$ which has left-limits $f(t-)$ at $t > 0$, we define the difference operator $\Delta$ by
\begin{align*}
  \Delta f(t) = f(t) - f(t-),
\end{align*}
where this difference is defined at $t=0$ when $f(0-)$ is defined. We also use the small order notation $o(\cdot)$. Namely, we write $f(n) = o(g(n))$ as $n \to \infty$ for real-valued functions $f$ and $g$ if $f(n)/g(n) \to 0$ as $n \to \infty$.

\subsection{Stochastic framework and basic facts}
\label{sec:framework}

We introduce the stochastic framework which will be used in this paper. Let $(\Omega,\sr{F},\dd{P})$ be a probability space, and let $\dd{F} \equiv \{\sr{F}_{t}; t \ge 0\}$ be a filtration, where $\sr{F}_{t}$ is a $\sigma$-field on $\Omega$ which is a subset of $\sr{F}$ and nondecreasing in $t \ge 0$, then the quartet $(\Omega,\sr{F},\dd{F},\dd{P})$ is called a stochastic basis. On this stochastic basis, we define a time-shift operator semigroup $\Theta_{\bullet} \equiv \{\Theta_{t}; t \ge 0\}$ on $\Omega$, {where} $\Theta_{t}$ is an operator on $\Omega$ such that
\begin{align*}
 & \Theta_{s+t} (\omega) = \Theta_s (\Theta_{t} (\omega)), \qquad s,t \ge 0, \omega \in \Omega, \nonumber\\
 & \Theta_{t}(A^{-1}) \equiv \{\omega \in \Omega; \Theta_{t} (\omega) \in A\} \in \sr{F}, \qquad t \ge 0, A \in \sr{F}.
\end{align*}

Throughout the paper, we denote by $X(\cdot)$ a stochastic process $\{X(t); t \ge 0\}$. Then, $X(\cdot)$ on the stochastic basis{, $(\Omega,\sr{F},\dd{F},\dd{P})$,} is said to be consistent with $\Theta_{\bullet}$ if
\begin{align*}
 X(s)(\Theta_{t} (\omega)) = X(s+t) (\omega), \qquad s,t \ge 0, \omega \in \Omega.
\end{align*}
Similarly, the consistency of a counting process is defined. The stochastic basis with the time shift operator $\Theta_{\bullet}$ will be used to define Palm distributions {with respect to} counting processes in \sectn{stationary}, following \cite{Miya2024}.

As mentioned in \sectn{introduction}, the tightness of a sequence of probability measures is {key} to {proving} their weak convergence, where probability measure is the synonym of distribution. Here, a sequence of probability measures $\pi_{n}$ on $(\dd{R}_{+},\sr{B}_{+})$ for $n \ge 1$ is said to be tight if, for any $\varepsilon > 0$, there is an $x \in \dd{R}_{+}$ such that
\begin{align*}
 \pi_{n}([0,x]) > 1 - \varepsilon, \qquad \forall n \ge 1.
\end{align*}

To present a basic property of tightness, we recall two modes of convergence of probability measures. A sequence of probability measures $\{\pi_{n}; n \ge 1\}$ on $(\dd{R}_{+},\sr{B}_{+})$ is said to {converge vaguely} if there is a sub-probability measure $\pi$ on $(\dd{R}_{+},\sr{B}_{+})$ such that $\pi_{n}((a,b]) \to \pi((a,b])$ as $n \to \infty$ for any $a, b \in \dd{R}_{+}$ satisfying $a < b$, which is denoted by $\pi_{n} \os{v}{\rightarrow} \pi$, where $\pi$ is called a sub-probability measure if $\pi(\dd{R}_{+}) \le 1$. In particular, if this $\pi$ is a probability measure, $\pi_{n}$ is said to {converge weakly} to $\pi$ as $n \to \infty$, which is denoted by $\pi_{n} \os{w}{\rightarrow} \pi$. The following results {are} well known (e.g., see Theorem 5.19 and Proposition 5.21 of \cite{Kall2001} and Theorem 4.3.4 of \cite{Chun2001}).

\begin{lemma}[Helly's selection Theorem]
 \label{lem:Helly}
 For any {subsequence} of probability measures $\{\pi_{n}; n \ge 1\}$ on $(\dd{R}_{+},\sr{B}_{+})$, there {exists a further} subsequence $\{\pi_{n_{k}}; k \ge 1\}$ {that converges vaguely}, that is, $\pi_{n_{k}} \os{v}{\rightarrow} \pi$ as $k \to \infty${,} for some sub-probability measure $\pi$.
\end{lemma}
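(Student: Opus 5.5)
The plan is the classical route through distribution functions and a diagonal argument. For each $n$, set $F_{n}(x) = \pi_{n}([0,x])$ for $x \in \dd{R}_{+}$. Each $F_{n}$ is nondecreasing, right-continuous and takes values in $[0,1]$. Let $\dd{Q}_{+}$ denote the nonnegative rationals, enumerated as $q_{1}, q_{2}, \ldots$. Since $\{F_{n}(q_{1})\}$ is a bounded real sequence, Bolzano--Weierstrass gives a convergent subsequence. From that subsequence we extract a further subsequence along which $F_{n}(q_{2})$ also converges, and we continue in this way. The diagonal sequence $\{n_{k}\}$ then satisfies $F_{n_{k}}(q) \to G(q)$ for every $q \in \dd{Q}_{+}$, for some $G$ on $\dd{Q}_{+}$ that is nondecreasing with $0 \le G \le 1$. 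If we start from an arbitrary subsequence of $\{\pi_{n}\}$, the same construction applies verbatim to it.

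Next we pass from $G$ to a measure. Define $F(x) = \inf\{G(q); q \in \dd{Q}_{+}, q > x\}$ for $x \ge 0$. This $F$ is nondecreasing, right-continuous and bounded by $1$. Let $\pi$ be the Lebesgue--Stieltjes measure on $\dd{R}_{+}$ with $\pi((a,b]) = F(b) - F(a)$, and set $\pi(\{0\}) = F(0)$. Then $\pi(\dd{R}_{+}) = \lim_{x\to\infty} F(x) \le 1$, so $\pi$ is a sub-probability measure. The standard sandwich argument gives $F_{n_{k}}(x) \to F(x)$ at every continuity point $x$ of $F$. Indeed, for rationals $r < x < s$,
\begin{align*}
 G(r) \le \liminf_{k} F_{n_{k}}(x) \le \limsup_{k} F_{n_{k}}(x) \le G(s),
\end{align*}
and both bounds can be made close to $F(x)$ because $F(x-) = F(x)$.

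The main obstacle is the gap between convergence at continuity points and the statement as written. The lemma asks for $\pi_{n_{k}}((a,b]) \to \pi((a,b])$ for \emph{all} $a<b$, which fails in general at atoms of $\pi$; for example, take $\pi_{n}$ to be the unit mass at $1+1/n$. I would therefore read the paper's definition of vague convergence in its standard sense, namely convergence for $a<b$ that are continuity points of $F$ (equivalently, $\pi(\{a\}) = \pi(\{b\}) = 0$). This matches the cited Kallenberg and Chung statements and is equivalent to $\int f\,d\pi_{n_{k}} \to \int f\,d\pi$ for continuous $f$ with compact support. With this reading, $\pi_{n_{k}}((a,b]) = F_{n_{k}}(b) - F_{n_{k}}(a) \to F(b) - F(a) = \pi((a,b])$ whenever $a$ and $b$ are continuity points. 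Because $F$ is monotone, it has only countably many discontinuities, so such points are dense and the limit $\pi$ is uniquely determined. If equivalence with compactly supported test functions is wanted, it follows by approximating $f$ uniformly with step functions whose jumps lie at continuity points of $F$.
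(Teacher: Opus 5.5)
Your argument is correct. It is the classical diagonal-extraction proof of Helly's theorem: convergence on the rationals, right-continuous regularization, then convergence at continuity points. This is the proof in the references the paper cites for this lemma (Kallenberg and Chung), since the paper gives no proof of its own. You are also right that the paper's definition of vague convergence must be read with $a,b$ restricted to continuity points of the limit (or to a dense set, as in Chung's definition). Under the literal definition, $\pi_{n} = \delta_{1+1/n}$ has no vaguely convergent subsequence.
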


\begin{lemma}
 \label{lem:tight-1}
 A sequence of probability measures $\{\pi_{n}; n \ge 1\}$ is tight if and only if, for any subsequence of $\{\pi_{n}\}$, there {exists a} further subsequence {that converges weakly}. In particular, if this weak limit, denoted by $\pi$, is unique, then $\pi_{n} \os{w}{\to} \pi$ as $n \to \infty$.
\end{lemma}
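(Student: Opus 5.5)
The plan is to combine Helly's selection theorem (\lem{Helly}) with the tightness condition, in both directions, and then handle the uniqueness clause with the standard subsequence argument.

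\emph{Sufficiency of tightness.} Suppose $\{\pi_{n}\}$ is tight, and take any subsequence. By \lem{Helly} it has a further subsequence $\pi_{n_k} \os{v}{\rightarrow} \pi$ for some sub-probability measure $\pi$. It remains to show that $\pi(\dd{R}_{+}) = 1$.

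Fix $\varepsilon>0$ and choose $x$ from tightness, so that $\pi_{n}([0,x]) > 1-\varepsilon$ for all $n$. The atoms of $\pi$ are at most countable, so we may choose $b > x$ that is not an atom of $\pi$. Then
\begin{align*}
 \pi([0,b]) = \lim_{k\to\infty} \pi_{n_k}([0,b]) \ge 1-\varepsilon .
\end{align*}
A small point arises here. The definition of vague convergence in the paper uses intervals $(a,b]$, so the mass at $0$ needs care. I would handle it by extending the measures to $(-1,\infty)$ and using the interval $(-1,b]$. Alternatively, I would note that the vague convergence supplied by \lem{Helly} is really convergence of distribution functions at continuity points, which includes the point $0$. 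Letting $\varepsilon \downarrow 0$ gives $\pi(\dd{R}_{+})=1$, so the convergence is weak.

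\emph{Necessity of tightness.} Suppose $\{\pi_n\}$ is not tight. Then there is an $\varepsilon>0$ such that for every $m$ there is an $n_m$ with $\pi_{n_m}([0,m]) \le 1-\varepsilon$. If $\{n_m\}$ took only finitely many values, some single $\pi_{n}$ would satisfy $\pi_n([0,m]) \le 1-\varepsilon$ for infinitely many $m$. This contradicts $\pi_n([0,m]) \to 1$ for a probability measure. We may therefore pass to a strictly increasing subsequence.

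By hypothesis this subsequence has a further subsequence converging weakly to some probability measure $\pi$. Choose a continuity point $b$ of $\pi$ with $\pi([0,b]) > 1-\varepsilon/2$. For all sufficiently large indices in the subsequence we have $m \ge b$, and hence
\begin{align*}
 \pi_{n_m}([0,b]) \le \pi_{n_m}([0,m]) \le 1-\varepsilon .
\end{align*}
On the other hand, $\pi_{n_m}([0,b]) \to \pi([0,b])$ along the further subsequence. This gives $\pi([0,b]) \le 1-\varepsilon$, a contradiction.

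\emph{The uniqueness clause.} Suppose that every convergent subsequence has the same weak limit $\pi$, but $\pi_n \os{w}{\rightarrow} \pi$ fails. Then there are an interval $(a,b]$ whose endpoints carry no mass under $\pi$, a $\delta>0$, and a subsequence along which
\begin{align*}
 |\pi_n((a,b]) - \pi((a,b])| \ge \delta .
\end{align*}
Extracting a further weakly convergent subsequence gives the limit $\pi$ by uniqueness, which contradicts the displayed inequality.

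The only real obstacle is this bookkeeping about continuity points and the mass at the origin. The paper's definition of vague convergence uses all intervals $(a,b]$, and this has to be reconciled with the standard definition via continuity points, as cited from \cite{Kall2001,Chun2001}. Everything else is standard.
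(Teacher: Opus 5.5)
Your proposal is correct. The paper gives no proof of this lemma and only cites Kallenberg and Chung, and your argument (Helly plus tightness in both directions, and the usual subsequence trick for the uniqueness clause) is the standard proof those references contain.

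Your caveat about the definition is also right, and it is needed. The paper asks for $\pi_{n}((a,b]) \to \pi((a,b])$ for \emph{all} $a<b$, which is too strong: under it $\pi_{n} = \delta_{1/n}$ would have no vaguely convergent subsequence, so Helly's theorem would fail. Reading convergence at non-atoms of $\pi$, and treating $[0,b]$ as you describe, fixes this.
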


\subsection{Sequence of state-dependent queues}
\label{sec:state-dependent}

We introduce the state-dependent queue. This is a single-server queue with an unlimited buffer in which customers bring $i.i.d.$ amounts of work for service, which are independent of arrival times, and are served in the first-come first-served manner. Its arrival process is generated by a renewal process in such a way that the residual arrival time decreases at a rate that depends on the current queue length. We call this rate an arrival speed. Similarly, its service speed depends on the queue length.

To present heavy-traffic, we use the sequence of the state-dependent queues which are indexed by positive integers $n$. For each $n \ge 1$, denote the arrival and service speeds when the queue length is $\ell$ by $\lambda^{(n)}(\ell)$ and $\mu^{(n)}(\ell)$, respectively. For convenience, we assume $\mu^{(n)}(0) = \lambda^{(n)}(0)$. This assumption has no influence on this queueing model because there is no service when the system is empty. Those speeds are defined on $\dd{Z}_{+}$. However, it is convenient to extend them to be defined on $\dd{R}_{+}$ because their diffusion-scaled limit is real valued in general. To this end, we redefine functions $\lambda^{(n)}(\cdot)$ and $\mu^{(n)}(\cdot)$ as
\begin{align}
 \label{eq:i-to-u}
 \lambda^{(n)}(u) = \lambda^{(n)}(\ell), \qquad \mu^{(n)}(u) = \mu^{(n)}(\ell), \qquad u \in (\ell-1,\ell], \ell \ge 1,
\end{align}
where $\lambda^{(n)}(0)$ and $\mu^{(n)}(0)$ are as they are. Obviously, these functions are left-continuous with right limits in $u$. Define the drift of the $n$-th state-dependent queue as
\begin{align}
\label{eq:drift-n}
  \gamma^{(n)}(u) = \lambda^{(n)}(u) - \mu^{(n)}(u) 1(u > 0), \qquad u \in \dd{R}_{+}.
\end{align}
This drift and its heavy-traffic limit play an important role in our analysis.

We are also interested in the multi-level queue of \cite{AtarMiya2026}, which is a special case of the state-dependent queue. We introduce a slightly extended version of this multi-level queue below.

\begin{definition}[Multi-level queue]
\label{dfn:MLQ-1}
For the $n$-th state-dependent queue, if the arrival and service speeds $\lambda^{(n)}(\cdot)$ and $\mu^{(n)}(\cdot)$ are step functions on $\dd{R}_{+}$ such that
\begin{mylist}{0}
\item [(M1)] they have finitely many discontinuous points in each finite interval,
\item [(M2)] the limits of $\lambda^{(n)}(u)$ and $\mu^{(n)}(u)$ as $u \to \infty$ exist and positive.
\end{mylist}
Let $K^{(n)}$ be the total number of the discontinuous points in (M1), which may be infinite. Define $\ell^{(n)}_{0} = 0$, and inductively define $\ell^{(n)}_{i}$ for $i \le K^{(n)}$ as
\begin{align}
\label{eq:level-i}
  \ell^{(n)}_{i} = \inf\{ u > \ell^{(n)}_{i-1}; |\Delta \lambda^{(n)}(u)| + |\Delta \mu^{(n)}(u)| > 0\},
\end{align}
then $\ell^{(n)}_{i}$ is called the $i$th level. Let $\ell^{(n)}_{\infty} = \ell^{(n)}_{K^{(n)}}$ if $K^{(n)} < \infty$, and $\ell^{(n)}_{\infty} = \infty$ otherwise. We call the state-dependent queue satisfying (M1) annd (M2) the $n$-th multi-level queue. 
\end{definition}

\begin{remark}
\label{rem:MLQ-1}
(i) For the multi-level queue, $\lambda^{(n)}(u)$ and $\mu^{(n)}(u)$ are constants for $u$ in either $[0,\ell^{(n)}_{1}]$ or $(\ell^{(n)}_{i-1},\ell^{(n)}_{i}]$ for $2 \le i < \infty$ or $(\ell^{(n)}_{K^{(n)}}, \infty)$ if $K^{(n)} < \infty$.\\
(ii) We will assume that $K^{(n)}$ is independent of $n$ in heavy-traffic conditions, and denote it by $K$ (see (M3) in \sectn{HT-c}). However, in this and next subsections, we do not assume $K^{(n)} = K$ because basic properties are studied for the $n$-th model for a fixed $n$.
\end{remark}

This definition of the multi-level queue extends that of \cite{AtarMiya2026} in which the total numbers of the levels are finite. As we will see, the multi-level queue will help us to better understand the state-dependent queue because of its simpler state-dependence.

For the state-dependent queue, we assume the following condition.
\begin{mylist}{3}
 \item [(\sect{preliminaries}.a)] For each $n \ge 1$, $\lambda^{(n)}(\cdot)$ and $\mu^{(n)}(\cdot)$ are measurable nonnegative functions such that
 \begin{align*}
  & \lambda^{(n)}_{\sup} \equiv \sup_{u \in \dd{R}_{+}} \lambda^{(n)}(u) < \infty, \qquad \mu^{(n)}_{\sup} \equiv \sup_{u \in \dd{R}_{+}} \mu^{(n)}(u) < \infty,\\
  & \lambda^{(n)}_{\inf} \equiv \inf_{u \in \dd{R}_{+}} \lambda^{(n)}(u) > 0, \qquad \mu^{(n)}_{\inf} \equiv \inf_{u \in \dd{R}_{+}} \mu^{(n)}(u) > 0.
 \end{align*}
\end{mylist}
The condition (\sect{preliminaries}.a) is obviously satisfied by the multi-level queue.

We next introduce common counting processes $A(\cdot)$ and $S(\cdot)$ which generate the {arrivals} and service completions of the $n$-th state-dependent queue for each $n \ge 1$. Following \cite{AtarMiya2026}, denote the $j$-th counting times of $A$ and $S$ by $t_{A,j}$ and $t_{S,j}$, respectively, for $j=1,2,\ldots$. Let $t_{A,0} = t_{S,0} = 0$, and let
\begin{align*}
 T_{A}(j) = t_{A,j+1} - t_{A,j}, \qquad T_{S}(j) = t_{S,j+1} - t_{S,j}, \qquad j=0, 1, \ldots.
\end{align*}
Then, we have
\begin{align*}
 A(t) = \inf \left\{k \ge 0; t_{A,k+1} > t \right\}, \qquad S(t) = \inf \left\{ k \ge 0; t_{S,k+1} > t \right\}, \qquad t \ge 0.
\end{align*}

Assume the following conditions for these counting processes.

\begin{mylist}{3}
 \item [(\sect{preliminaries}.b)] $A(\cdot)$ and $S(\cdot)$ are delayed renewal processes and independent of each other. Namely, $T_{A}(j) > 0$, $j=1,2,\ldots$ are $i.i.d.$ random variables, while $T_{A}(0) \ge 0$ is independent of $T_{A}(j)$ for $j \ge 1$. Similar {independence is} assumed for $T_{S}(j)$ for $j=0,1,\ldots$.
 
 \item [(\sect{preliminaries}.c)]  Denote by $T_{A}$ and $T_{S}$ random variables subject to the common distributions of $T_{A}(j)$ and $T_{S}(j)$ for $j \ge 1$, respectively. These $T_{A}$ and $T_{S}$ have unit means and finite third moments, namely,
 \begin{align}
  \label{eq:moment-c}
  & \dd{E}[T_{A}] = 1, \quad \dd{E}[T_{A}^{3}] < \infty, \qquad \dd{E}[T_{S}]= 1, \quad \dd{E}[T_{S}^{3}] < \infty.
 \end{align}
 while $T_{A}(0)$ and $T_{S}(0)$ do not necessarily have the same distributions as $T_{A}$ and $T_{S}$, respectively, but have finite means and variances. Furthermore, it is assumed that $\sigma_{A}^{2} + \sigma_{S}^{2} > 0$, where $\sigma_{A}^{2} = \dd{E}[(T_{A} - 1)^{2}]$ and $\sigma_{S}^{2} = \dd{E}[(T_{S} - 1)^{2}]$, namely, the variances of $T_{A}$ and $T_{S}$, respectively, which are finite by \eq{moment-c}.
 
\end{mylist}

\begin{remark}
 \label{rem:moment-c}
 {The finiteness of the second moments of $T_{A}$ and $T_{S}$ is the standard assumption for diffusion approximations, but we here assume the stronger condition that their third moments are finite.} However, these finiteness conditions are crucial in the present BAR approach similarly to \cite{Miya2025}.
\end{remark}

\subsection{Markov process for the state-dependent queue}
\label{sec:Markov}

In this section, we construct a Markov process which describes the state-dependent queue indexed by positive integer $n$, called the $n$-th state-dependent queue, and give conditions for this Markov process to be positive recurrent.

We first inductively define the queue length process $L^{(n)}(\cdot) \equiv \{L^{(n)}(t); t \ge 0\}$ for the $n$-th state-dependent queue. To this end, define
\begin{align*}
 & U^{(n)}(L^{(n)},t) = \int_{0}^{t} \lambda^{(n)}(L^{(n)}(s)) ds, \;\; V^{(n)}(L^{(n)},t) = \int_{0}^{t} \mu^{(n)}(L^{(n)}(s)) 1(L^{(n)}(s) > 0) ds, \;\; t \ge 0,
\end{align*}
and let
\begin{align}
\label{eq:ZLn-t}
 & Z^{(n)}(L^{(n)},t) = A(U^{(n)}(L^{(n)},t)) - S(V^{(n)}(L^{(n)},t)).
\end{align}
Then, if $L^{(n)}(\cdot)$ is well defined, $A(U^{(n)}(L^{(n)},t))$ and $S(V^{(n)}(L^{(n)},t))$ {represent} the total numbers of {arrivals} and departures, respectively, up to time $t$. Hence, we must have
\begin{align}
 \label{eq:Ln-t1}
 & L^{(n)}(t) = L^{(n)}(0-) + Z^{(n)}(L^{(n)},t), \qquad t \ge 0,
\end{align}
where $L^{(n)}(0-)$ is the number of customers at time $0$ which does not include an exogenous arrival and service completion at time $0$ if they are. Conversely, we have the following lemma, which is intuitively obvious, but its proof requires to solve the non-linear functional equation \eq{Ln-t1}, so we prove it in \app{Ln-unique}.
\begin{lemma}
\label{lem:Ln-unique}
For a given $L^{(n)}(0-)$, $L^{(n)}(\cdot)$ is uniquely determined by \eq{Ln-t1}.
\end{lemma}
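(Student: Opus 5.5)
The plan is to construct $L^{(n)}(\cdot)$ pathwise, one jump at a time, and to show that any solution of \eq{Ln-t1} must agree with this construction on each inter-jump interval. Fix $\omega$ and the sample paths of $A(\cdot)$ and $S(\cdot)$. These are càdlàg counting functions with finitely many jumps on bounded intervals, because $T_A(j),T_S(j)>0$ for $j \ge 1$.

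Let $L$ be any solution. Between its jumps $L$ is constant, say $L=\ell$. Then $U^{(n)}$ and $V^{(n)}$ grow linearly there, at rates $\lambda^{(n)}(\ell)$ and $\mu^{(n)}(\ell)1(\ell>0)$. The next jump of $L$ therefore occurs exactly when $U^{(n)}$ reaches the next arrival epoch $t_{A,A(u)+1}$, or when $V^{(n)}$ reaches the next service epoch $t_{S,S(v)+1}$, whichever comes first. Here $u$ and $v$ denote the current values of $U^{(n)}$ and $V^{(n)}$.

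This gives an explicit recursion. Set $\tau_0=0$, and let $L(0)=L^{(n)}(0-)+A(0)-S(0)$. Note $S(0)$ is relevant only if service is running, so the case $V=0$ at time $0$ must be handled: $S(V(0))=S(0)$, which is $0$ unless $t_{S,1}=0$. Given $\tau_k$, $L(\tau_k)=\ell_k$, $U(\tau_k)=u_k$ and $V(\tau_k)=v_k$, define
\[
\tau_{k+1}=\tau_k+\min\Bigl(\frac{t_{A,A(u_k)+1}-u_k}{\lambda^{(n)}(\ell_k)},\ \frac{t_{S,S(v_k)+1}-v_k}{\mu^{(n)}(\ell_k)1(\ell_k>0)}\Bigr),
\]
with the convention $1/0=\infty$. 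On $[\tau_k,\tau_{k+1})$ set $L=\ell_k$, let $U$ and $V$ grow linearly, and update $\ell_{k+1}$ by the number of arrival and service counting points reached. Since $\lambda^{(n)}_{\inf}>0$ by (\sect{preliminaries}.a), each minimum is strictly positive and finite. Simultaneous events are allowed and are accounted for by $Z^{(n)}$ directly.

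Existence follows by checking that the constructed $L$ satisfies \eq{Ln-t1} on each $[\tau_k,\tau_{k+1})$: $A(U(t))$ and $S(V(t))$ are constant there and jump exactly as $L$ does at $\tau_{k+1}$. Uniqueness follows by induction on $k$. If two solutions agree on $[0,\tau_k)$, then they agree at $\tau_k$ by right-continuity and \eq{Ln-t1}. Up to the first time either one changes, both have identical $U$ and $V$, and hence identical right-hand sides of \eq{Ln-t1}. Neither can change before $\tau_{k+1}$, and both jump identically at $\tau_{k+1}$.

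One point needs care. Queue length cannot go negative, because service stops when $L=0$: $V$ freezes, so $S(V)$ does not increase. Sanity requires $L\ge 0$, which the $1(L>0)$ factor ensures.

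The main obstacle is showing $\tau_k\to\infty$, i.e., no accumulation of jumps in finite time, so that the recursion defines $L$ on all of $\dd{R}_+$. The plan is to use the bounds $\lambda^{(n)}(u)\le\lambda^{(n)}_{\sup}$ and $\mu^{(n)}(u)\le\mu^{(n)}_{\sup}$, which give $U(t)\le \lambda^{(n)}_{\sup} t$ and $V(t)\le\mu^{(n)}_{\sup}t$. The number of jumps of $L$ in $[0,t]$ is then at most $A(\lambda^{(n)}_{\sup}t)+S(\mu^{(n)}_{\sup}t)$, which is finite a.s. since renewal processes with positive interarrival times do not explode. Hence $\tau_k\to\infty$ a.s.

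A minor subtlety is $V$ at time $0$ when $L(0-)=0$ and there is an arrival at time $0$. This is handled by the convention that $\mu^{(n)}(0)=\lambda^{(n)}(0)$ is irrelevant, and by taking the $1(L>0)$ factor inside the integral. Since that factor enters only through an integral, it causes no ambiguity.
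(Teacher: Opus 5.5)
Your proposal is correct and follows essentially the same route as the paper: an induction over the successive arrival/departure event times, with $L^{(n)}$ frozen between them and each next event time determined by the integrated speeds reaching the next renewal epoch of $A$ or $S$. Your bound of the number of events in $[0,t]$ by $A(\lambda^{(n)}_{\sup}t)+S(\mu^{(n)}_{\sup}t)$ supplies the justification that the event times diverge, which the paper only asserts.
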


 Using the $L^{(n)}(\cdot)$ of \lem{Ln-unique}, define the counting processes $N^{(n)}_{e}(\cdot)$ and $N^{(n)}_{d}(\cdot)$ by
\begin{align*}
  N^{(n)}_{e}(t) = A(U^{(n)}(L^{(n)},t)), \qquad N^{(n)}_{d}(t) = S(V^{(n)}(L^{(n)},t)), \qquad t \ge 0.
\end{align*}
 Obviously, they count the arrivals and departures of the $n$-th system.
 
 Thus, the joint process $\{(L^{(n)}(t), N^{(n)}_{e}(t), N^{(n)}_{d}(t)); t \ge 0\}$ is well defined. However, this joint process is not Markov in general. So, we next introduce supplementary variables for $L^{(n)}(\cdot)$ to be a component of a Markov process. 

For these supplementary variables, we take $R^{(n)}_{e}(t)$ and $R^{(n)}_{d}(t)$ which are defined as
\begin{align}
 \label{eq:Re-t1}
 & R^{(n)}_{e}(t) = T_{A}(0) + \sum_{i=1}^{N^{(n)}_{e}(t)} T_{A}(i) - U^{(n)}(L^{(n)},t), \\
 \label{eq:Rd-t1}
 & R^{(n)}_{d}(t) = T_{S}(0) + \sum_{i=1}^{N^{(n)}_{d}(t)} T_{{S}}(i) - V^{(n)}(L^{(n)},t), \qquad t \ge 0.
\end{align}
We call $R^{(n)}_{e}(t)$ and $R^{(n)}_{d}(t)$ nominal residual times for arrivals and service completions, respectively.

Note that {$R^{(n)}_{e}(t)$} is determined by $L^{(n)}(\cdot)$ up to time $t$ and the sequence of the inter-arrival times $\{T_{A}(\ell); \ell =0,1,\ldots\}$ of $A$, and different from the actual residual arrival time $\widehat{R}^{(n)}_{e}(t)$ which is defined as
\begin{align*}
  \widehat{R}^{(n)}_{e}(t) = \inf\{s \ge 0; N^{(n)}_{e}(t+s) > N^{(n)}_{e}(t)\}, \qquad t \ge 0.
\end{align*}
Because the actual residual arrival time $\widehat{R}^{(n)}_{e}(t)$ depends on the evolution $\{L^{(n)}(s); s > t\}$, it cannot be used for constructing a Markov process. The same applies to the actual residual service time. 

Let $t^{(n)}_{e,j}$ and $t^{(n)}_{d,j}$ be the $j$-th counting time of $N^{(n)}_{e}$ and $N^{(n)}_{d}$, respectively, then, from the definitions \eq{Re-t1} and \eq{Rd-t1}, we have
\begin{align}
 \label{eq:Re-t2}
 & R^{(n)}_{e}(t^{(n)}_{e,j}) = T_{A}(j), \quad \frac {d^{+}}{dt} R^{(n)}_{e}(t) = - \lambda^{(n)}(L^{(n)}(t)),\\
 \label{eq:Rd-t2}
 & R^{(n)}_{d}(t^{(n)}_{d,j}) = T_{S}(j), \quad \frac {d^{+}}{dt} R^{(n)}_{d}(t) = - \mu^{(n)}(L^{(n)}(t)) 1(L^{(n)}(t) > 0), \qquad t \ge 0,
\end{align}
where $\frac {d^{+}}{dt}$ stands for taking a derivative from right. From these facts and their definitions, we can see that $R^{(n)}_{e}(t-) = 0$ if and only if $t$ is the counting time of $N^{(n)}_{e}(\cdot)$. Similarly, $R^{(n)}_{d}(t-) = 0$ and $R^{(n)}_{d}(t) > 0$ if and only if $t$ is the counting time of $N^{(n)}_{d}(\cdot)$. Hence, $N^{(n)}_{e}(\cdot)$ and $N^{(n)}_{d}(\cdot)$ are uniquely determined by the nominal residual times $R^{(n)}_{e}(\cdot)$ and $R^{(n)}_{d}(\cdot)$.

Define processes $R^{(n)}(\cdot) \equiv \{R^{(n)}(t); t \ge 0\}$ and $X^{(n)}(\cdot) \equiv \{X^{(n)}(t); t \ge 0\}$ as
\begin{align*}
 R^{(n)}(t) = (R^{(n)}_{e}(t), R^{(n)}_{d}(t)), \qquad X^{(n)}(t) = (L^{(n)}(t), R^{(n)}_{e}(t), R^{(n)}_{d}(t)), \qquad t \ge 0,
\end{align*}
and define the filtration $\dd{F}^{(n)} \equiv \{\sr{F}^{(n)}_{t}; t \ge 0\}$ by
\begin{align}
\label{eq:Fn-t}
 \sr{F}^{(n)}_{t} = \sigma(\{X^{(n)}(s); s \in [0,t]\}),
\end{align}
that is, the $\sigma$-field generated by $X^{(n)}(s)$ for all $s \in [0,t]$. Further, define the counting process $N^{(n)}(\cdot)$ as
\begin{align}
 \label{eq:Nn-t}
 N^{(n)}(t) = \sum_{0<s \le t} 1(\Delta N^{(n)}_{e}(s) + \Delta N^{(n)}_{d}(s) \ge 1), \qquad t \ge 0,
\end{align}
where recall that $\Delta N^{(n)}_{v}(t) = N^{(n)}_{v}(t) - N^{(n)}_{v}(t-)$ for $v=e,d$. Then all the discontinuous times of $X^{(n)}(\cdot)$ are counted by $N(\cdot)$. Hence, by \eq{Re-t2} and \eq{Rd-t2}, $X^{(n)}(\cdot)$ is a piecewise linear Markov process of \cite{Davi1993}. Thus, we have the next lemma.

\begin{lemma}
\label{lem:Xn-defined}
$X^{(n)}(\cdot)$ is a strong Markov process with respect to the filtration $\dd{F}^{(n)}$, and $N^{(n)}_{e}(\cdot)$ and $N^{(n)}_{d}(\cdot)$ are adapted to $\dd{F}^{(n)}$.
\end{lemma}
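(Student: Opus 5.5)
The plan is to realize $X^{(n)}(\cdot)$ as a piecewise deterministic Markov process in the sense of \cite{Davi1993}, built from the i.i.d. inter-event times, and then read off the strong Markov property from Davis' construction. The state space is $E = \dd{Z}_{+} \times \dd{R}_{+} \times \dd{R}_{+}$. First I will write down the flow. From the state $x=(\ell,r_e,r_d)$, by \eq{Re-t2} and \eq{Rd-t2}, the deterministic motion is
\begin{align*}
 \phi_t(x) = \bigl(\ell,\; r_e - \lambda^{(n)}(\ell) t,\; r_d - \mu^{(n)}(\ell)1(\ell>0) t\bigr).
\end{align*}
This motion continues until the exit time
\begin{align*}
 t^*(x) = \min\bigl(r_e/\lambda^{(n)}(\ell),\; r_d/(\mu^{(n)}(\ell)1(\ell>0))\bigr),
\end{align*}
with the convention $r/0 = \infty$. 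Condition (\sect{preliminaries}.a) makes the speeds bounded and strictly positive, so $t^*(x)$ is finite and the flow is Lipschitz.

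Second, I will specify the jumps. There is no random jump intensity: all jumps occur at the boundary, forced, and are governed by a transition kernel $Q$. If the arrival coordinate hits $0$, then $\ell$ increases by one and $r_e$ is replaced by an independent copy of $T_A$. If the service coordinate hits $0$, then $\ell$ decreases by one and $r_d$ is replaced by a copy of $T_S$. Simultaneous hitting is handled by applying both. The state $\ell = 0$ freezes $r_d$, consistent with $\mu^{(n)}(0)$ being irrelevant.

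Next I will check that the process obtained from \lem{Ln-unique} through \eq{Re-t1} and \eq{Rd-t1} coincides pathwise with this Davis construction. Both evolve by the same flow between the counting times of $N^{(n)}$ in \eq{Nn-t}, and both jump according to the same rule. Moreover, the marks are the consecutive i.i.d. variables $T_A(j)$ and $T_S(j)$, and by (\sect{preliminaries}.b) these are independent of the past up to the jump time. This last point is what gives the Markov and strong Markov property at jump times.

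The main obstacle is the standing assumption in \cite{Davi1993} that the number of jumps is finite on finite time intervals almost surely, that is, non-explosion. I would handle it directly by bounding jumps in terms of renewal counts. The renewal clocks run at speed at most $\lambda^{(n)}_{\sup}$ and $\mu^{(n)}_{\sup}$, so $N^{(n)}(t) \le A(\lambda^{(n)}_{\sup}t) + S(\mu^{(n)}_{\sup}t) + 2$. The right-hand side is finite a.s. because $T_A$ and $T_S$ are positive with mean $1$. With this, Davis' theorem (Theorem 25.5 in \cite{Davi1993}) yields the strong Markov property with respect to the natural filtration, which is $\dd{F}^{(n)}$ of \eq{Fn-t}.

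Finally, adaptedness follows from the observation already made before \lem{Xn-defined}. A time $t$ is a counting time of $N^{(n)}_{e}$ exactly when $R^{(n)}_{e}(t-)=0$, and similarly for $N^{(n)}_{d}$. Hence
\begin{align*}
 N^{(n)}_{e}(t) = \sum_{0\le s\le t} 1\bigl(R^{(n)}_{e}(s-)=0\bigr),
\end{align*}
which is $\sr{F}^{(n)}_t$-measurable, and the analogous formula holds for $N^{(n)}_{d}$. These are countable sums over jump times of a càdlàg adapted process, so measurability can be checked via the usual approximation over rational times.
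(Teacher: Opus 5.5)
Your proposal is correct and follows the same route as the paper. The paper notes that all jumps of $X^{(n)}(\cdot)$ are counted by $N^{(n)}(\cdot)$ and that between jumps the residuals decrease linearly by \eq{Re-t2}--\eq{Rd-t2}, so $X^{(n)}(\cdot)$ is a piecewise-deterministic Markov process in Davis's sense. It also recovers $N^{(n)}_{e}$ and $N^{(n)}_{d}$ from the zeros of $R^{(n)}_{e}(t-)$ and $R^{(n)}_{d}(t-)$. You simply supply the details the paper leaves implicit: the flow, the forced-jump kernel, the independence of the marks, and non-explosion.

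One small precision: Davis's standard conditions require $\dd{E}_{x}[N^{(n)}(t)]<\infty$, not just a.s.\ finiteness. Your bound $N^{(n)}(t)\le A(\lambda^{(n)}_{\sup}t)+S(\mu^{(n)}_{\sup}t)+2$ gives this as well, since renewal functions are finite.
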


Let $\dd{F}$ be the filtration generated by $\dd{F}^{(n)}$ for all $n \ge 1$, then it follows from the construction of $X^{(n)}(\cdot)$ and the filtration $\dd{F}^{(n)}$ that there is a time-shift operator $\Theta_{\bullet}$ on stochastic basis $(\Omega,\sr{F},\dd{F},\dd{P})$ such that $X^{(n)}(\cdot)$ is consistent with $\Theta_{\bullet}$ for all $n \ge 1$ because $\{X^{(n)}(\cdot); n \ge 1\}$ is a collection of countably many stochastic processes. Throughout the paper, we consider this $X^{(n)}(\cdot)$ for $n \ge 1$.

As for the stability of $X^{(n)}(\cdot)$, we assume the following two conditions.
\begin{mylist}{3}
 \item [(\sect{preliminaries}.d)]
 $\dd{P}[T_{A} \ge x] > 0$ for all $ x > 0$, and there is a measurable nonnegative function $p(\cdot)$ and a positive integer $k$ such that
 \begin{align*}
  & \dd{P}\left[ a \le \sum_{i=1}^{k} T_{A}(i) \le b\right] \ge \int_a^{b} p(x) dx, \quad \forall a,b,\ 0 \le a < b, \qquad \int_{0}^{\infty} p(x) dx > 0.
 \end{align*}
 
 \item [(\sect{preliminaries}.e)] $\gamma^{(n)}_{\infty} \equiv \lim_{x \to \infty} \gamma^{(n)}(x)$ exists and $\gamma^{(n)}_{\infty} < 0$ for $n \ge 1$, where recall the definition \eq{drift-n} of the drift $\gamma^{(n)}(\cdot)$.
 
\end{mylist}

\begin{lemma}
 \label{lem:Xn-stability}
 For the $n$-th state-dependent queue satisfying (\sect{preliminaries}.a)--(\sect{preliminaries}.c), the strong Markov process $X^{(n)}(\cdot)$ is positive recurrent if (\sect{preliminaries}.d) and (\sect{preliminaries}.e) hold.
\end{lemma}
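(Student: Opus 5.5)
We prove positive recurrence of $X^{(n)}(\cdot)$ by the standard two-part criterion for piecewise deterministic Markov processes (cf.\ \cite{Davi1993} and the Harris recurrence framework of Meyn--Tweedie). First, we show that a suitable bounded set is a petite (small) set. Second, we show that the return time to this set has finite mean, uniformly over starting points in the set. Throughout, $n$ is fixed, so we drop the superscript where harmless.

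\emph{Step 1 (drift to a bounded set).} Fix $x_{0}$ so large that $\gamma^{(n)}(u) \le \gamma^{(n)}_{\infty}/2 < 0$ for $u \ge x_{0}$, which is possible by (\sect{preliminaries}.e). Let $C = \{(\ell,r_{e},r_{d}); \ell \le x_{0}, r_{e} \le c, r_{d} \le c\}$ for a constant $c$ to be chosen. While $L(s) > x_{0}$, the arrivals are $A(U(t))$ and the departures are $S(V(t))$, with $\frac{d}{dt}(U - V) = \gamma^{(n)}(L(t)) \le \gamma^{(n)}_{\infty}/2$, where $U$ and $V$ are the time changes $U^{(n)}(L^{(n)},t)$ and $V^{(n)}(L^{(n)},t)$. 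Two consequences follow.
\begin{itemize}
\item Since $\lambda_{\sup},\mu_{\sup}<\infty$ and $\lambda_{\inf},\mu_{\inf}>0$, the time changes grow linearly.
\item The renewal SLLN and Wald/Lorden bounds (finite second and third moments from (\sect{preliminaries}.c)) give $\dd{E}[A(U(t)) - S(V(t))] \le \gamma^{(n)}_{\infty} t/2 + O(1+R_{e}(0)+R_{d}(0))$.
\end{itemize}
We therefore use the Lyapunov-type function $f(\ell,r_{e},r_{d}) = \ell + \kappa(r_{e}+r_{d})$ (or, more simply, a fluid-limit argument in the style of Dai). Scaling initial states $|x| \to \infty$, the fluid limit of $L$ is $(\ell_{0} + \gamma^{(n)}_{\infty} t)^{+}$ after the residuals are exhausted, which reaches a neighbourhood of zero in a time proportional to $|x|$. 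By Dai's fluid-model criterion this yields $\dd{E}_{x}[\tau_{C}] \le \kappa' (1+|x|)$. Uniform integrability of the scaled counts is handled by the finite second moments. I expect this step to be the main technical work. The subtle point is that the speeds are arbitrary measurable functions below $x_{0}$, so the fluid limit is not identified there. Scaling removes this issue, because below level $x_{0}$ the drift is only bounded, and $x_{0}/|x| \to 0$.

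\emph{Step 2 (petiteness of $C$).} From any state in $C$, we wait for the queue to empty and the residual clocks to regenerate. This is where (\sect{preliminaries}.d) enters. Because $\dd{P}[T_{A} \ge x]>0$ for all $x$, a long interarrival time occurs with positive probability. During it, service (at speed at least $\mu_{\inf}$) clears at most $x_{0}$ customers within a bounded time, with probability bounded below uniformly on $C$; this uses $r_{d} \le c$ and finitely many $T_{S}$ being bounded with positive probability. Once the queue is empty, $R_{d}$ is frozen at a fresh service time or at its current value, and $R_{e}$ decreases at speed $\lambda^{(n)}(0)$. The spread-out condition in (\sect{preliminaries}.d) on $\sum_{i=1}^{k}T_{A}(i)$ then provides a density component for the arrival epochs. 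By a standard argument (as for $GI/G/1$ with spread-out interarrivals), this makes the time of the first arrival into an empty system have a minorizing absolutely continuous component. This yields a common minorization measure for the embedded chain at arrival epochs to an empty system, so $C$ is petite.

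\emph{Step 3 (conclusion).} Combining Steps 1 and 2, $X^{(n)}$ is a $\psi$-irreducible T-process (or, equivalently, we obtain positive Harris recurrence via the regenerative structure at the minorization times). Here petiteness of $C$ and $\sup_{x\in C}\dd{E}_{x}[\tau_{C}]<\infty$ give positive Harris recurrence by Meyn--Tweedie's Theorem for continuous-time processes. Strong Markovness from \lem{Xn-defined} justifies the regeneration argument. We therefore conclude that $X^{(n)}(\cdot)$ is positive recurrent.
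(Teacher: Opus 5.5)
Your proposal takes essentially the same route as the paper, which verifies Dai's fluid-limit criterion and then appeals to Theorem 4.2 of Dai (1995), with (\sect{preliminaries}.d) supplying petiteness of bounded sets; the paper derives $\ol{L}(t)=(1-|\gamma^{(n)}_{\infty}|t)^{+}$ from the semimartingale decompositions of $A(U(\cdot))$ and $S(V(\cdot))$. One caution on your Step 2: the $GI/G/1$ spread-out argument does not carry over verbatim, because the arrival clock runs at the state-dependent speed $\lambda^{(n)}(L^{(n)}(t))$, but petiteness of $C$ still holds by sampling at an exponential time while the system is empty, since then $R^{(n)}_{e}$ decreases linearly at the constant speed $\lambda^{(n)}(0)$, $R^{(n)}_{d}$ is frozen at a fresh service time, and the unbounded support of $T_{A}$ makes such an empty period with a large residual occur with probability bounded below on $C$.
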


\begin{remark}
 \label{rem:Xn-stability}
 The condition (\sect{preliminaries}.d) is typically {assumed} for a Markov process describing a queueing model to be irreducible {in the literature} (e.g., see \cite[(1.4) and (1.5)]{Dai1995}).
\end{remark}

Intuitively, \lem{Xn-stability} is clear because (\sect{preliminaries}.e) suggests that the fluid limit for $L^{(n)}(\cdot)$ has a negative drift. However, the proof is a bit technical, so we detail it in \app{Xn-stability} in the Appendix.

\subsection{Heavy-traffic conditions}
\label{sec:HT-c}

We introduce heavy-traffic conditions for the sequence of the state-dependent queues indexed by $n$. First{,} recall some of the modeling primitives of the $n$-th system:
\begin{align*}
 & \lambda^{(n)}(u), \quad \mu^{(n)}(u), \quad u \in \dd{R}_{+},\\
 & \dd{E}[T_{A}] = 1, \quad \sigma_{A}^{2} = \dd{E}[(T_{A} - 1)^{2}], \qquad \dd{E}[T_{S}]= 1, \quad \sigma_{S}^{2} = \dd{E}[(T_{S} - 1)^{2}].
\end{align*}
Using these notations, we define, for $u \in \dd{R}_{+}$,
\begin{align}
 \label{eq:bsber-n}
 \begin{split}
  & b^{(n)}(u) = n^{1/2} (\lambda^{(n)}(u) - \mu^{(n)}(u)), \qquad \sigma^{(n)}(u) = (\lambda^{(n)}(u) \sigma_{A}^{2} + \mu^{(n)}(u) \sigma_{S}^{2})^{1/2},\\
  & \beta^{(n)}(u) = \frac {2 b^{(n)}(u)} {(\sigma^{(n)})^{2}(u)}, \qquad \rho^{(n)}(u) = \lambda^{(n)}(u)/\mu^{(n)}(u),
 \end{split}
\end{align}
where $\sigma^{(n)}(u) > 0$ for any $u \in \dd{R}_{+}$ by (\sect{preliminaries}.a) and (\sect{preliminaries}.b), so $\beta^{(n)}(u)$ is well defined. Note that $b^{(n)}(u)$ is slightly different from the drift $\gamma^{(n)}(u)$ of \eq{drift-n}.

For heavy-traffic approximation, we scale the queue length $L^{(n)}(t)$ by $n^{-1/2}$ but do not scale time because we are only concerned with the stationary distribution of the process $X^{(n)}(\cdot)$. Thus, define stochastic processes $\widehat{L}^{(n)}(\cdot)$ and $\widehat{X}^{(n)}(\cdot)$ for heavy-traffic approximation as
\begin{align}
\label{eq:h-LX}
 \widehat{L}^{(n)}(t) = n^{-1/2} L^{(n)}(t), \qquad \widehat{X}^{(n)}(t) = (\widehat{L}^{(n)}(t), R^{(n)}(t)), \qquad t \ge 0,
\end{align}
where $\widehat{L}^{(n)}(t)$ may not be integer-valued, but $n^{1/2} \widehat{L}^{(n)}(t)$ must be integer-valued. To present this integer, we define
\begin{align}
\label{eq:qn-x}
  q^{(n)}_{x} = [n^{1/2} x], \qquad x \in \dd{R}_{+}, n \ge 1,
\end{align}
where $[a]$ is the largest integer not greater than $a \in \dd{R}$, then
\begin{align}
\label{eq:hLn-Ln}
  \{\widehat{L}^{(n)}(t) \le x\} = \{L^{(n)}(t) \le n^{1/2} x\} = \{L^{(n)}(t) \le q^{(n)}_{x}\}.
\end{align}
To describe the dynamics of $\widehat{X}(\cdot)$, we modify the arrival and service speeds as
\begin{align}
 \label{eq:la-mu-hn}
 & \widehat{\lambda}^{(n)}(u) = \lambda^{(n)}(n^{1/2}u), \qquad \widehat{\mu}^{(n)}(u) = \mu^{(n)}(n^{1/2}u), \qquad u \in \dd{R}_{+},
\end{align}
{then} it is easy to see that
\begin{align*}
 \widehat{\lambda}^{(n)}(\widehat{L}^{(n)}(t)) = \lambda^{(n)}(L^{(n)}(t)), \qquad \widehat{\mu}^{(n)}(\widehat{L}^{(n)}(t)) = \mu^{(n)}(L^{(n)}(t)).
\end{align*}
Note that the hat notations, $\widehat{\lambda}^{(n)}(u)$ and $\widehat{\mu}^{(n)}(u)$, work nicely for $u = \widehat{L}^{(n)}(t)$ in heavy-traffic because $\widehat{L}^{(n)}(t)$ is likely finite as $n \to \infty$, while this is not the case for $\lambda^{(n)}(u)$ and $\mu^{(n)}(u)$ for $u = L^{(n)}(t)$ because $L^{(n)}(t)$ diverges in heavy-traffic as $n \to \infty$.

We assume the following condition for heavy-traffic approximation.
\begin{mylist}{3}
 \item [(\sect{preliminaries}.f)] There are positive-valued functions $\lambda(\cdot)$, $\lambda^{*}(\cdot)$, $\mu(\cdot)$ and $\mu^{*}(\cdot)$ satisfying the following conditions.
 \begin{itemize}
  \item [(f1)] They are bounded from above and away from 0, and left-continuous with right limits at their discontinuous points.
  \item [(f2)] For each finite interval, the set of all their discontinuous points is finite. Namely, for each $t > 0$, the set $\bigcup_{f = \lambda, \lambda^{*}, \mu,\mu^{*}} \{u \in (0,t]; f(u) \not= f(u+)\}$ is finite, where $f(t+)$ is the right limit of function $f$ at $t \ge 0$.
  \item [(f3)] $\lambda(\cdot) = \mu(\cdot)$, and
  \begin{align}
   \label{eq:heavy-t-la1}
   & \lim_{n \to \infty} \sup_{u \in \dd{R}_{+}} \left|n^{1/2} (\widehat{\lambda}^{(n)}(u) - \lambda(u)) - \lambda^{*}(u)\right| = 0,\\
   \label{eq:heavy-t-mu1}
   & \lim_{n \to \infty} \sup_{u \in \dd{R}_{+}} \left|n^{1/2} (\widehat{\mu}^{(n)}(u) - \mu(u) ) - \mu^{*}(u)\right| = 0.
  \end{align}
 \end{itemize}
 \end{mylist}

The intuitive meanings of $\lambda(u)$, $\mu(u)$, $\lambda^{*}(u)$, $\mu^{*}(u)$ and related quantities can be seen by \lem{h-limit} below. In particular, if those functions are constant, then (f3) is a typical condition for heavy-traffic in the literature. Similar to (f3), we have asymptotic behaviors for other characteristics. To see them, we introduce further notations for \eq{bsber-n} similarly to \eq{la-mu-hn}. They are defined as
\begin{align}
 \label{eq:bs-hn}
 & \widehat{b}^{(n)}(u) =b^{(n)}(n^{1/2} u), \qquad \widehat{\sigma}^{(n)}(u) = \sigma^{(n)}(n^{1/2}u), \qquad \widehat{\beta}^{(n)}(u) = \beta^{(n)}(n^{1/2}u).
\end{align}
We call $\widehat{b}^{(n)}(\cdot)$ and $\widehat{\sigma}^{(n)}(\cdot)$ a diffusion-scaled drift and a diffusion-scaled variance, respectively. Define, for $u \in \dd{R}_{+}$,
\begin{align}
 \label{eq:bsber-u}
 & b(u) = \lambda^{*}(u) - \mu^{*}(u), \quad \sigma(u) = (\lambda(u) \sigma_{A}^{2} + \mu(u) \sigma_{S}^{2})^{1/2}, \quad \beta(u) = \frac {2 b(u)} {\sigma^{2}(u)}.
\end{align}
Note that $\sigma^{2}(u)$ is bounded from above and away from $0$ by (f1) of (\sect{preliminaries}.f). Then we have the following lemma, which easily follows from (\sect{preliminaries}.f), but its proof is outlined in \app{h-limit} for completeness.
\begin{lemma}
 \label{lem:h-limit}
 Under the conditions (\sect{preliminaries}.a) and (\sect{preliminaries}.f), we have
 \begin{align}
  \label{eq:limit-la-mu-n}
  & \lim_{n \to \infty} \sup_{u \in \dd{R}_{+}} \left|\widehat{\lambda}^{(n)}(u) - \lambda(u)\right| = 0, \quad \lim_{n \to \infty} \sup_{u \in \dd{R}_{+}} \left| \widehat{\mu}^{(n)}(u) - \mu(u)\right| = 0,\\
  \label{eq:limit-b-sg-n}
  & \lim_{n \to \infty} \sup_{u \in \dd{R}_{+}} \left|\widehat{b}^{(n)}(u) - b(u) \right| = 0, \quad \lim_{n \to \infty} \sup_{u \in \dd{R}_{+}} |\widehat{\sigma}^{(n)}(u) - \sigma(u)| = 0,
 \end{align}
 which yield
 \begin{align}
  \label{eq:beta-limit}
  &  \lim_{n \to \infty} \sup_{u \in \dd{R}_{+}} |\widehat{\beta}^{(n)}(u) - \beta(u)| = 0.
 \end{align}
 \end{lemma}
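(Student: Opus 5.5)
The plan is to derive everything from the two uniform limits \eq{heavy-t-la1} and \eq{heavy-t-mu1}, using the uniform boundedness of $\lambda^{*}$ and $\mu^{*}$ from (f1).

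\textbf{Step 1: \eq{limit-la-mu-n}.} Write
\begin{align*}
 \widehat{\lambda}^{(n)}(u) - \lambda(u) = n^{-1/2}\bigl(n^{1/2}(\widehat{\lambda}^{(n)}(u) - \lambda(u)) - \lambda^{*}(u)\bigr) + n^{-1/2}\lambda^{*}(u).
\end{align*}
The supremum over $u$ of the first bracket tends to $0$ by \eq{heavy-t-la1}, and $\sup_{u}\lambda^{*}(u) < \infty$. Hence the whole supremum is $O(n^{-1/2})$. The same argument, using \eq{heavy-t-mu1}, gives the claim for $\widehat{\mu}^{(n)}$.

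\textbf{Step 2: the drift part of \eq{limit-b-sg-n}.} By definition, $\widehat{b}^{(n)}(u) = n^{1/2}(\widehat{\lambda}^{(n)}(u) - \widehat{\mu}^{(n)}(u))$. Since $\lambda(u) = \mu(u)$ by (f3), we can subtract $n^{1/2}(\lambda(u) - \mu(u)) = 0$ and obtain
\begin{align*}
 \widehat{b}^{(n)}(u) - b(u) = \bigl(n^{1/2}(\widehat{\lambda}^{(n)}(u) - \lambda(u)) - \lambda^{*}(u)\bigr) - \bigl(n^{1/2}(\widehat{\mu}^{(n)}(u) - \mu(u)) - \mu^{*}(u)\bigr).
\end{align*}
Both brackets vanish uniformly in $u$ by (f3). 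This identity is the only place where $\lambda = \mu$ is used, and it is the one point that needs care; the rest is routine.

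\textbf{Step 3: the variance part of \eq{limit-b-sg-n}.} We have
\begin{align*}
 (\widehat{\sigma}^{(n)})^{2}(u) - \sigma^{2}(u) = \sigma_{A}^{2}\bigl(\widehat{\lambda}^{(n)}(u) - \lambda(u)\bigr) + \sigma_{S}^{2}\bigl(\widehat{\mu}^{(n)}(u) - \mu(u)\bigr),
\end{align*}
which converges to $0$ uniformly by Step 1. For the square roots we use
\begin{align*}
 |\sqrt{x} - \sqrt{y}| \le |x-y|/\sqrt{y}.
\end{align*}
The quantity $\sigma^{2}(u) \ge (\sigma_{A}^{2} + \sigma_{S}^{2}) \inf_{v}\min(\lambda(v),\mu(v))$ is bounded away from $0$ uniformly in $u$, by (f1) and $\sigma_{A}^{2} + \sigma_{S}^{2} > 0$ from (\sect{preliminaries}.c). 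This gives uniform convergence of $\widehat{\sigma}^{(n)}$ to $\sigma$.

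\textbf{Step 4: \eq{beta-limit}.} Write
\begin{align*}
 \widehat{\beta}^{(n)} - \beta = \frac{2(\widehat{b}^{(n)} - b)}{(\widehat{\sigma}^{(n)})^{2}} + 2b\left(\frac{1}{(\widehat{\sigma}^{(n)})^{2}} - \frac{1}{\sigma^{2}}\right).
\end{align*}
By Step 3 and the uniform lower bound $\inf_{u} \sigma^{2}(u) =: c > 0$, we have $(\widehat{\sigma}^{(n)})^{2} \ge c/2$ uniformly in $u$ for large $n$. Also $b$ is bounded, because $\lambda^{*}$ and $\mu^{*}$ are bounded. Moreover,
\begin{align*}
 \bigl|1/(\widehat{\sigma}^{(n)})^{2} - 1/\sigma^{2}\bigr| \le 2c^{-2}\,\bigl|(\widehat{\sigma}^{(n)})^{2} - \sigma^{2}\bigr|.
\end{align*}
Combining these bounds with Steps 2 and 3 shows that the supremum of $|\widehat{\beta}^{(n)}(u) - \beta(u)|$ over $u$ tends to $0$.

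Condition (\sect{preliminaries}.a) is used only to guarantee that $\widehat{\beta}^{(n)}$ is well defined for each $n$.
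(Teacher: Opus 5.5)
Your proposal is correct and follows essentially the same route as the paper's proof in the appendix. Both use the same triangle-inequality decomposition from \eq{heavy-t-la1} and \eq{heavy-t-mu1} (with $\lambda=\mu$ used for $\widehat{b}^{(n)}$), followed by continuity arguments for $\widehat{\sigma}^{(n)}$ and $\widehat{\beta}^{(n)}$ based on the uniform positive lower bound of $\sigma^{2}$; you simply spell out the square-root and reciprocal estimates, and the reliance on $\sigma_{A}^{2}+\sigma_{S}^{2}>0$ from (\sect{preliminaries}.c), which the paper leaves implicit.
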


Note that, by (f1) of (\sect{preliminaries}.f), $|b(u)|$ and $|\beta(u)|$ are locally integrable, that is,
\begin{align}
 \label{eq:bs-locint}
 & \int_{0}^{x} |b(u)| du < \infty, \qquad \int_{0}^{x} |\beta(u)| du < \infty, \qquad \forall x > 0.
\end{align}
The heavy-traffic condition (\sect{preliminaries}.f) also allows for limiting operations to {work nicely} for the diffusion-scaled process $\widehat{L}^{(n)}(\cdot)$. We present it as a lemma.

\begin{lemma}
 \label{lem:limit-hbn}
 Assume the condition (\sect{preliminaries}.f) and that $\widehat{L}^{(n)}(\cdot)$ has the stationary distribution $\nu^{(n)}$ for each $n \ge 0$ and $\nu^{(n)}$ converges vaguely to a sub-probability measure $\nu$ as $n \to \infty$. Then, for $0 \le x < y \le \infty$,
 \begin{align}
  \label{eq:limit-hbn}
   & \lim_{n \to \infty} \int_{x}^{y} \widehat{b}^{(n)}(u) d\nu^{(n)}(u) = \int_{x}^{y} b(u) d\nu(u),\\
  \label{eq:limit-hsn}
   & \lim_{n \to \infty} \int_{x}^{y} (\widehat{\sigma}^{(n)})^{2}(u) d\nu^{(n)}(u) = \int_{x}^{y} \sigma^{2}(u) d\nu(u),
 \end{align}
 where $\int_{x}^{y}$ stands for the integration over $(x,y] \cap [0,\infty)$ for $0 < x < y \le \infty$, and $\int_{0}^{y}$ stands for the integration over $[0,y] \cap [0,\infty)$ for $0 < y \le \infty$.
\end{lemma}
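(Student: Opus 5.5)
We split each integrand into a uniformly small error and a fixed limit function, then pass to the limit in the fixed-function part. Write
\begin{align*}
 \int_{x}^{y} \widehat{b}^{(n)}(u) d\nu^{(n)}(u) = \int_{x}^{y} \bigl(\widehat{b}^{(n)}(u) - b(u)\bigr) d\nu^{(n)}(u) + \int_{x}^{y} b(u) d\nu^{(n)}(u).
\end{align*}
The first term is bounded in absolute value by $\sup_{u \in \dd{R}_{+}} |\widehat{b}^{(n)}(u) - b(u)|$, because $\nu^{(n)}$ is a probability measure. By \eq{limit-b-sg-n} of \lem{h-limit}, this bound tends to $0$. The same decomposition with $(\widehat{\sigma}^{(n)})^{2} - \sigma^{2}$ handles \eq{limit-hsn}. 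Here we use $|(\widehat{\sigma}^{(n)})^{2} - \sigma^{2}| \le |\widehat{\sigma}^{(n)} - \sigma| (\widehat{\sigma}^{(n)} + \sigma)$, and $\widehat{\sigma}^{(n)}$ is uniformly bounded for large $n$ by uniform convergence and (f1). So it remains to show $\int_{x}^{y} f\, d\nu^{(n)} \to \int_{x}^{y} f\, d\nu$ for the fixed functions $f = b$ and $f = \sigma^{2}$.

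For finite $y$, we use (f1) and (f2). On $[0,y]$, $f$ is bounded, left-continuous, and has finitely many discontinuities. Take a partition $0 = u_{0} < u_{1} < \cdots < u_{m} = y$ that contains $x$ and all discontinuity points in $(0,y]$. On each $(u_{i-1},u_{i}]$ the function $f$ is continuous, with a right limit at $u_{i-1}$; extending it continuously to the closed interval gives a uniformly continuous function. Approximate it uniformly within $\varepsilon$ by a step function $\sum_{j} c_{j} 1_{(a_{j-1},a_{j}]}$ on a finer partition of $(u_{i-1},u_{i}]$. Vague convergence in the paper's sense gives $\nu^{(n)}((a,b]) \to \nu((a,b])$ for each such interval. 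Hence the integrals of the step function converge, and the uniform $\varepsilon$-error costs at most $\varepsilon$ in both $\nu^{(n)}$ and $\nu$, since both have mass at most $1$. The atom at $0$, which appears when $x = 0$, needs care: the convention $\int_{0}^{y}$ over $[0,y]$ requires $\nu^{(n)}(\{0\}) \to \nu(\{0\})$, or equivalently convergence of $\nu^{(n)}([0,a])$. This is the standard meaning of vague convergence on $\dd{R}_{+}$, namely convergence of distribution functions at continuity points. If one insists on the literal definition with half-open intervals, one should check that $\nu^{(n)}([0,a]) = 1 - \nu^{(n)}((a,\infty))$ can still be handled. I would use distribution-function convergence $\nu^{(n)}([0,a]) \to \nu([0,a])$, which is what Helly's theorem (\lem{Helly}) delivers.

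The main obstacle is $y = \infty$, because vague convergence allows mass to escape to infinity, and then $\int_{x}^{\infty} f\, d\nu^{(n)}$ need not converge to $\int_{x}^{\infty} f\, d\nu$. I would first try to use structure of $f$ at infinity: if $b$ and $\sigma^{2}$ have limits $b(\infty)$ and $\sigma^{2}(\infty)$, which (f2) alone does not give, then
\begin{align*}
 \int_{x}^{\infty} f\, d\nu^{(n)} = \int_{x}^{z} f\, d\nu^{(n)} + f(\infty)\, \nu^{(n)}((z,\infty)) + o_{z}(1),
\end{align*}
where $o_{z}(1)$ means a term that vanishes as $z \to \infty$ uniformly in $n$. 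One would then need $\nu^{(n)}((z,\infty)) \to \nu((z,\infty))$, which holds only if $\nu$ is a probability measure, i.e.\ under tightness. I therefore expect that the statement for $y = \infty$ must be read under an implicit tightness or weak-convergence assumption. Alternatively, it can be derived by subtraction from the total integral if that is known to converge. In the proof I would make this explicit: for $y = \infty$, use tightness, so that $\nu^{(n)}((z,\infty)) < \varepsilon$ uniformly in $n$ for large $z$, together with boundedness of $f$ from (f1). This reduces the case $y = \infty$ to the finite case, with an error of at most $\varepsilon \sup|f|$.
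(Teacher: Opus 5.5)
For finite $y$ your argument is the paper's. You split off $\int_{x}^{y}(\widehat{b}^{(n)}-b)\,d\nu^{(n)}$ using the uniform convergence in \lem{h-limit}, cut at the jumps of $b$ (finitely many in each bounded interval), and combine uniform continuity on each piece with convergence of $\nu^{(n)}$ on intervals. Your remark about the atom at $0$ is justified: under the literal definition in \sectn{framework}, the constant sequence $\nu^{(n)}=\delta_{0}$ ``converges vaguely'' to the zero measure, so convergence of $\nu^{(n)}([0,a])$ has to be built into the notion. Do not justify it by \lem{Helly}, though. Helly's theorem gives convergence only at continuity points of $\nu$, whereas your partition points (the jumps of $f$, and $x$, $y$) are fixed. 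Under that weaker notion the conclusion fails: $\nu^{(n)}=\delta_{\ell+1/n}\to\delta_{\ell}$ with $b$ jumping at $\ell$ gives $\int b\,d\nu^{(n)}\to b(\ell+)\neq b(\ell)$. Atoms at jumps of $b$ are exactly what the proof of \thr{MLQ-main-1} has to exclude. So your step-function argument, like the paper's, needs convergence on every interval $(a,b]$ and $[0,a]$ with these fixed endpoints.

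For $y=\infty$ your diagnosis is correct, and it points to an unjustified step in the paper's own proof. The paper bounds the tail by $\sup_{u\ge y_{0}}|b(u)|\,\bigl(\nu^{(n)}((y_{0},\infty))+\nu((y_{0},\infty))\bigr)<\varepsilon$ for $n\ge n_{1}$. That is a uniform tail bound, i.e.\ tightness, and it does not follow from vague convergence. For $\sigma^{2}$ the claim at $y=\infty$ is in fact false without tightness, because $\liminf_{n}\int_{x}^{\infty}(\widehat{\sigma}^{(n)})^{2}\,d\nu^{(n)}\ge\int_{x}^{\infty}\sigma^{2}\,d\nu+(1-\nu(\dd{R}_{+}))\inf_{u}\sigma^{2}(u)$.

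For $b$, however, you overlooked the case $f(\infty)=0$. There both tails are bounded by $\sup_{u>z}|b(u)|$ however much mass escapes, so no tightness is needed. This is the case that matters. By (\sect{preliminaries}.e) and \lem{h-limit}, $\limsup_{u\to\infty}b(u)\le 0$, so under (\sect{preliminaries}.g) either $b_{\infty}<0$ (the tight case of \thr{tight-2}) or $b_{\infty}=0$. The latter is the non-tight regime in which the ``only if'' part of \thr{SDQ-main-1} applies \eq{limit-hbn} with $y=\infty$, and a tightness-only version would not cover that use. The correct statement is: \eq{limit-hbn} with $y=\infty$ holds if $\nu(\dd{R}_{+})=1$ or $b(u)\to 0$ as $u\to\infty$, and \eq{limit-hsn} with $y=\infty$ holds if and only if $\nu(\dd{R}_{+})=1$.
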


This lemma is easily proved, but we give its proof in \app{limit-hbn} because of its importance. For the multi-level queue of \cite{AtarMiya2026}, the heavy traffic conditions are given in a different way. In the rest of this section, we show that they are equivalent to (\sect{preliminaries}.f).

Consider the sequence of the multi-level queues of \dfn{MLQ-1}. Let $J_{i,j} = \{i \in \dd{Z}_{+}; i \le k \le j\}$ by $J_{i,j}$ for $i,j \in \dd{Z}_{+} \cup \{\infty\}$, and recall that $K^{(n)}$ is the total number of the levels of the $n$-th multi-level queue . For the multi-level queue, we assume:
\begin{mylist}{3}
\item [(M3)] For each $n \ge 1$, $K^{(n)}$ is independent of $n$, and denoted by $K$.
\item [(M4)] There are a strictly increasing sequence of positive real numbers $\ell_{1}, \ell_{2}, \ldots$ and such that
 \begin{align}
  \label{eq:level-ni}
  \ell^{(n)}_{i} = n^{1/2} \ell_{i}, \qquad i \in J_{1,K}, \qquad n \ge 1.
 \end{align}
\end{mylist}

In \cite{AtarMiya2026}, this $\ell^{(n)}_{i}$ is defined as $\ell^{(n)}_{i} = n^{1/2} \ell_{i} + o(n^{1/2})$. Hence, our assumption \eq{level-ni} is slightly stronger, but this difference is minor in application, while it simplifies our analysis. Note that $\ell_{i}$ and $\ell_{i}^{(n)}$ may not be integers while $q^{(n)}_{x}$ is an integer for any $x \in \dd{R}_{+}$, and
\begin{align*}
  \ell_{i}^{(n)}-1 \le q^{(n)}_{\ell_{i}} = [n^{1/2} \ell_{i}] \le \ell_{i}^{(n)}.
\end{align*}

Define $S_{i}$ and $S^{(n)}_{i}$ for $i \in J_{1,K+1}$ as
 \begin{align}
  \label{eq:l-set-i}
  & S_{1} = [0, \ell_{1}], \qquad S_{i} = (\ell_{i-1}, \ell_{i}], \qquad i \in J_{2,K}, \qquad S_{K+1} = (\ell_{K}, \infty),\\
  \label{eq:ln-set-i}
  & S^{(n)}_{1} = [0,\ell^{(n)}_{1}], \quad S^{(n)}_{i} = (\ell^{(n)}_{i-1}, \ell^{(n)}_{i}], \quad i \in J_{2,K}, \quad S^{(n)}_{K+1} = (\ell^{(n)}_{K}, \infty).
 \end{align}
Note that $S_{K+1} = S^{(n)}_{K+1} = \emptyset$ for $K=\infty$ because $\ell_{K} = \ell^{(n)}_{K} = \infty$, and therefore a proposition on $S_{K+1}$ can be ignored for $K=\infty$. We call $S^{(n)}_{i}$ the $i$-th level set for the $n$-th multi-level queue. Note that the collection of all the level sets is a partition of $\dd{R}_{+}$. 

By the assumption of (M1) and the definition of $\ell^{(n)}_{i}$, there are constants $\lambda^{(n)}_{i}$ and $\mu^{(n)}_{i}$ such that
 \begin{align}
  \label{eq:constant-la-mu}
  \lambda^{(n)}(u) = \lambda^{(n)}_{i}, \qquad \mu^{(n)}(u) = \mu^{(n)}_{i}, \qquad \forall u \in S^{(n)}_{i}, \qquad i \in J_{1,K+1}.
 \end{align}
 
Thus, the multi-level queue has a more specific structure than the state-dependent queue. In particular, from \eq{constant-la-mu},
\begin{align*}
   \widehat{\lambda}^{(n)}(u) = \lambda^{(n)}(n^{1/2} u) = \lambda^{(n)}_{i}, \qquad n^{1/2} u \in S^{(n)}_{i}\; \mbox{ equivalently }\; u \in S_{i}.
\end{align*}
Hence, define $\widehat{\lambda}^{(n)}_{i} = \widehat{\lambda}^{(n)}(u)$ for $u \in S_{i}$, then $\widehat{\lambda}^{(n)}_{i} = \lambda^{(n)}_{i}$ for $i \in J_{1,K+1}$. Similarly, define
 \begin{align*}
  \widehat{\mu}^{(n)}_{i} = \widehat{\mu}^{(n)}(u), \qquad \widehat{b}^{(n)}_{i} = \widehat{b}^{(n)}(u), \qquad \widehat{\sigma}^{(n)}_{i} = \widehat{\sigma}^{(n)}(u), \qquad u \in S_{i},\ i \in J_{1,K+1},
\end{align*}
then $\widehat{\mu}^{(n)}_{i} = \mu^{(n)}_{i}$, $\widehat{b}^{(n)}_{i} = b^{(n)}_{i}$ and $\widehat{\sigma}^{(n)}_{i} = \sigma^{(n)}_{i}$ for $i \in J_{1,K+1}$.

\begin{remark}
\label{rem:MLQ-2}
The arrival and service speeds (called rates) in the $n$-th multi-level queue of \cite{AtarMiya2026} are $n$ times larger than those of the multi-level queue of \dfn{MLQ-1}. Namely, $\lambda^{n}_{i}$ and $\mu^{n}_{i}$ of \cite{AtarMiya2026} are equal to $n\, \lambda^{(n)}_{i}$ and $n\, \mu^{(n)}_{i}$, respectively. This is because the time scaling for diffusion approximation is included in $\lambda^{n}_{i}$ and $\mu^{n}_{i}$ of \cite{AtarMiya2026}.
\end{remark}

By this remark, $\lambda^{n}_{i}$ and $\mu^{n}_{i}$ of \cite{AtarMiya2026} are related to $\lambda^{(n)}_{i}$ and $\mu^{(n)}_{i}$ of the present paper as
\begin{align*}
  \lambda^{n}_{i} = n \lambda^{(n)}_{i} = n \widehat{\lambda}^{(n)}_{i}, \qquad \mu^{n}_{i} = n \mu^{(n)}_{i} = n \widehat{\mu}^{(n)}_{i}, \qquad i \in J_{1,K+1},
\end{align*}
where $K$ of \cite{AtarMiya2026} is replaced $K+1$. Therefore, the heavy-traffic conditions of \cite[(2.14)]{AtarMiya2026} can be written as (M3), (M4) and
\begin{mylist}{3}
\item [(M5)] There are $\lambda_{i} > 0, \; \lambda^{*}_{i} \in \dd{R}$ for $i \in J_{1,K+1}$ and $\mu_{i} > 0, \; \mu^{*}_{i} \in \dd{R}$ for $i \in J_{1,K+1}$ such that $\lambda_{i} = \mu_{i}$ and
  \begin{align*}
   \widehat{\lambda}^{(n)}_{i} = \lambda_{i} + \lambda^{*}_{i} n^{-1/2} + o(n^{-1/2}) > 0, \qquad \widehat{\mu}^{(n)}_{i} = \mu_{i} + \mu^{*}_{i} n^{-1/2} + o(n^{-1/2}) > 0.
  \end{align*}
\end{mylist}
 
 Let $b_{i} = \lambda^{*}_{i} - \mu^{*}_{i}$, $\sigma_{i} = \sigma_{i}(u)$ and $\beta_{i} = \beta_{i}(u)$ for $u \in S_{i}$, then, from the condition (M5), we have
 \begin{align}
  \label{eq:2g-heavy}
\begin{split}
 & n^{1/2} (\widehat{\lambda}^{(n)}_{i} - \lambda_{i}) \to \lambda^{*}_{i}, \qquad n^{1/2} (\widehat{\mu}^{(n)}_{i} - \mu_{i}) \to \mu^{*}_{i}, \\
 & \widehat{b}^{(n)}_{i} \to b_{i}, \qquad \widehat{\sigma}^{(n)}_{i} \to \sigma_{i}, \qquad \widehat{\beta}^{(n)}_{i} \to \beta_{i}, 
\end{split} \qquad n \to \infty.
 \end{align}
 
 \begin{lemma}\rm
\label{lem:MLQ-heavy}
For the multi-level queue, the heavy traffic conditions (M3), (M4) and (M5) are equivalent to (\sect{preliminaries}.f).
\end{lemma}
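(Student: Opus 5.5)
The plan is to prove the two implications separately, with the limit functions written explicitly as step functions on the level sets $S_{i}$. Assuming (M3)--(M5), I will define
\begin{align*}
 \lambda(u) = \lambda_{i}, \quad \mu(u) = \mu_{i}, \quad \lambda^{*}(u) = \lambda^{*}_{i}, \quad \mu^{*}(u) = \mu^{*}_{i}, \qquad u \in S_{i},\ i \in J_{1,K+1}.
\end{align*}
I will then check (f1)--(f3) of (\sect{preliminaries}.f).

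For (f2), the only discontinuities lie in $\{\ell_{i}\}$. This set is strictly increasing, and under (M2) and (\sect{preliminaries}.f) its points cannot accumulate at a finite value. That follows from the definition of levels in \dfn{MLQ-1}, where (M1) gives finitely many levels in each bounded interval, combined with (M4): $\ell^{(n)}_{i} = n^{1/2}\ell_{i}$, so accumulation of $\ell_{i}$ at a finite point would contradict (M1) for each fixed $n$. Left-continuity holds because the $S_{i}$ are left-open, right-closed.

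For (f3), I will use that $\widehat{\lambda}^{(n)}(u) = \widehat{\lambda}^{(n)}_{i}$ for $u \in S_{i}$. This gives
\begin{align*}
 \sup_{u} \left|n^{1/2}(\widehat{\lambda}^{(n)}(u)-\lambda(u))-\lambda^{*}(u)\right| = \sup_{i \in J_{1,K+1}} \left|n^{1/2}(\widehat{\lambda}^{(n)}_{i}-\lambda_{i})-\lambda^{*}_{i}\right|,
\end{align*}
and the same identity holds for $\mu$.

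The main obstacle is $K=\infty$. In that case (M5) gives only pointwise $o(n^{-1/2})$ for each $i$, whereas (f3) demands uniformity over $i$. Likewise (f1) requires $\lambda_{i},\mu_{i}$ to be bounded and bounded away from $0$ uniformly in $i$, and $\lambda^{*}_{i},\mu^{*}_{i}$ to be bounded. For finite $K$ the supremum is over finitely many terms, so it tends to $0$ and positivity/boundedness is automatic. For $K=\infty$ I would first try to get uniformity from (M2), which says the speeds converge as $u\to\infty$, together with (\sect{preliminaries}.a). If that fails, I would read the $o(n^{-1/2})$ in (M5) as uniform in $i$, and the constants as bounded and away from zero, in order to match \cite{AtarMiya2026}. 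I would state this interpretation explicitly as the point where care is needed. One further point is that (f1) asks for positive $\lambda^{*},\mu^{*}$. Since only $\lambda^{*}-\mu^{*}$ matters, I can add a common large constant $c$ to $\lambda^{*}$ and $\mu^{*}$ and absorb it by replacing $\lambda$ with $\lambda - cn^{-1/2}$. If that is not allowed, I would note that positivity is inessential.

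For the converse, assume (\sect{preliminaries}.f) for a multi-level queue. (M3) and (M4) are structural, so I would argue them from the fact that the levels of $\widehat{\lambda}^{(n)}$ are $n^{-1/2}\ell^{(n)}_{i}$. The idea is that uniform convergence to $\lambda$ forces the scaled level positions to stabilize at the discontinuity points of the limit functions. Since this is weak, in practice I would treat (M3)--(M4) as standing structural assumptions and prove only that (f3) implies (M5). Fix $i$ and pick any $u \in S_{i}$. Set $\lambda_{i}=\lambda(u)$, $\lambda^{*}_{i}=\lambda^{*}(u)$, and similarly for $\mu$; these are constant on $S_{i}$. They are constant because $\widehat{\lambda}^{(n)}$ is constant there, and the uniform limits of $\widehat{\lambda}^{(n)}$ and of $n^{1/2}(\widehat{\lambda}^{(n)}-\lambda)$ are then constant on $S_{i}$ as well. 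Evaluating \eq{heavy-t-la1} at $u$ then gives $\widehat{\lambda}^{(n)}_{i} = \lambda_{i}+\lambda^{*}_{i}n^{-1/2}+o(n^{-1/2})$, and $\lambda_{i}=\mu_{i}>0$ follows from (f1) and (f3).
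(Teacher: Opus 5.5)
Your route is the paper's. In the forward direction you define $\lambda,\mu,\lambda^{*},\mu^{*}$ as step functions on the $S_{i}$ and reduce the supremum over $u$ to a supremum over $i\in J_{1,K+1}$. In the converse you read the constants off (\sect{preliminaries}.f) on each level set.

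The caveats you raise are genuine, and they are gaps in the paper's own proof, not in yours.
\begin{itemize}
\item For $K=\infty$ the paper simply asserts that $\max_{i\in J_{1,K+1}}|n^{1/2}(\widehat{\lambda}^{(n)}_{i}-\lambda_{i})-\lambda^{*}_{i}|\to 0$ by \eq{2g-heavy}, which is only pointwise in $i$. Your uniform-in-$i$ reading of (M5) is therefore necessary, not a fallback. (M2) and (\sect{preliminaries}.a) control each fixed $n$ as $u\to\infty$, but nothing uniformly in $n$. For example, take $\widehat{\lambda}^{(n)}_{i}=1+n^{-1/2}1(i\ge n)$ and $\widehat{\mu}^{(n)}_{i}=1+2n^{-1/2}+n^{-1}2^{-i}$, so that every $n^{1/2}\ell_{i}$ is a level. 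This satisfies (M1)--(M5) with $\lambda_{i}=\mu_{i}=1$ and $\lambda^{*}_{i}=0$, yet the supremum equals $1$ for every $n$.
\item (M3)--(M4) cannot be derived from (\sect{preliminaries}.f), despite the paper's one-line appeal to \lem{h-limit}. (\sect{preliminaries}.f) says nothing about finitely many $n$. Even for large $n$ it cannot detect extra jumps of size $o(n^{-1/2})$ placed anywhere. By contrast, (M3)--(M4) constrain every $n\ge 1$.
\end{itemize}
So the converse only makes sense with (M3)--(M4) taken as hypotheses, exactly as you treat them. Under that reading, your derivation of (M5) from \eq{heavy-t-la1} on each $S_{i}$ is correct.

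The one step that fails is your positivity fix. The function $\lambda(\cdot)$ in (\sect{preliminaries}.f) must not depend on $n$, so replacing it by $\lambda-cn^{-1/2}$ is not allowed. There is also no other freedom to shift anything. $\lambda$ is forced to be the uniform limit of $\widehat{\lambda}^{(n)}$, and then $\lambda^{*}$ is forced to be the uniform limit of $n^{1/2}(\widehat{\lambda}^{(n)}-\lambda)$. Adding $c$ to $\lambda^{*}$ and $\mu^{*}$ therefore breaks \eq{heavy-t-la1} and \eq{heavy-t-mu1}.

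Since (M5) allows any $\lambda^{*}_{i}\in\dd{R}$, e.g.\ $\lambda^{*}_{i}=0$, the forward implication is actually false under the literal reading of (f1), which asks $\lambda^{*},\mu^{*}$ to be positive and bounded away from $0$. The correct resolution is your fallback: read (f1) as requiring $\lambda,\mu$ to be bounded and bounded away from $0$, and $\lambda^{*},\mu^{*}$ only to be bounded.

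With that reading, plus the uniform reading of (M5) when $K=\infty$, your argument is complete. That includes your checks of (f1) and (f2), which the paper omits.
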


\begin{proof}
Note that we have defined continuous variable functions such as $\lambda^{(n)}(u)$ and $\mu^{(n)}(u)$ for the $n$-th multi-label queue, but not defined them for $\lambda_{i}, \mu_{i}, \lambda^{*}_{i}$ and $\mu^{*}_{i}$. So, define
 \begin{align*}
  & \lambda(u) = \sum_{i=1}^{K} \lambda_{i} 1(u \in S_{i}), \qquad \mu(u) = \sum_{i=1}^{K} \mu_{i} 1(u \in S_{i}),\\
  & \lambda^{*}(u) = \sum_{i=1}^{K} \lambda^{*}_{i} 1(u \in S_{i}), \qquad \mu^{*}(u) = \sum_{i=1}^{K} \mu^{*}_{i} 1(u \in S_{i}).
 \end{align*}
Assume that (M3), (M4) and (M5) hold. Then, from the definitions of $\widehat{\lambda}^{(n)}(u)$, $\lambda(u)$ and $\lambda^{*}(u)$, we have
 \begin{align*}
  & \left|n^{1/2} (\widehat{\lambda}^{(n)}(u) - \lambda(u)) - \lambda^{*}(u)\right| = \left|\sum_{i=1}^{K} \left(n^{1/2} (\widehat{\lambda}^{(n)}_{i} - \lambda_{i}) - \lambda^{*}_{i} \right) 1(u \in S_{i})\right| \nonumber\\
  & \quad \le \max_{i \in J_{1,K+1}} \left|n^{1/2} (\widehat{\lambda}^{(n)}_{i} - \lambda_{i}) - \lambda^{*}_{i} \right|.
 \end{align*}
 This implies \eq{limit-la-mu-n} because the last term in the above inequality does not depend on $u \in \dd{R}_{+}$ and vanishes as $n \to \infty$ by \eq{2g-heavy}. Similarly, \eq{limit-b-sg-n} is proved. Hence, we have (\sect{preliminaries}.f). Conversely, assume (\sect{preliminaries}.f), then, by \lem{h-limit}, we have the heavy-traffic conditions (M3), (M4) and (M5).
\end{proof}

\subsection{Time evolution of the $n$-th state-dependent queue}
\label{sec:time-evolution}

Note that $X^{(n)}(\cdot)$ is uniquely {determined} by the following data{:}
\begin{align}
 \label{eq:SDQ-primitive}
 A, S, \{(\lambda^{(n)}(u), \mu^{(n)}(u)); u \in \dd{R}_{+}\},
\end{align}
where recall that $\mu^{(n)}(0) = \lambda^{(n)}(0)$. We refer to these data as the modeling primitives of the $n$-th system.

We next describe the time evolution of $X^{(n)}(\cdot)$ using a test function. For this, we introduce some notations. We choose $\dd{R}_{+}^{3}$ {as} the state space of $X^{(n)}(\cdot)$. Let $D(\dd{R}_{+}^{3})$ be the set of all measurable functions from $(u,v_{1},v_{2}) \in \dd{R}_{+}^{3}$ to $\dd{R}$ which are continuous in $(v_{1},v_{2}) \in \dd{R}_{+}^{2}$. Let $D_{b}(\dd{R}_{+}^{3})$ be the set of all the functions in $D(\dd{R}_{+}^{3})$ which are bounded, and let $D_{b}^{p}(\dd{R}_{+}^{3})$ be the set of all the functions in $D_{b}(\dd{R}_{+}^{3})$ whose 2nd and 3rd componens have bounded partial derivatives from the right with respect to each variable. For $f \in D_{b}^{p}(\dd{R}_{+}^{3})$, denote the partial derivatives of $f(u,v_{1},v_{2})$ {with respect to} $v_{i}$ from the right by $\frac {\partial^{+}}{\partial v_{i}}$ for $i=1,2$.

Define operator $\sr{H}^{(n)}$ which maps $f \in D_{b}^{p}(\dd{R}_{+}^{3})$ to $\sr{H}^{(n)} f \in D_{b}(\dd{R}_{+}^{3})$ by
\begin{align*}
 \sr{H}^{(n)} f(u,\vc{v}) & = - \lambda^{(n)}(u) \frac {\partial^{+}}{\partial v_{1}} f(u,\vc{v}) - \mu^{(n)}(u) 1(u>0) \frac {\partial^{+}}{\partial v_{2}} f(u,\vc{v}), \quad (u,\vc{v}) \equiv {(u, v_{1}, v_{2})} \in \dd{R}_{+}^{3}.
\end{align*}
This $\sr{H}^{(n)}$ describes the instantaneous change of $X^{(n)}(t)$ when $L^{(n)}(t) = u$.

To describe the discontinuous changes of $X^{(n)}(t)$, recall the difference operator $\Delta$ which is defined as $\Delta g(t) = g(t) - g(t-)$ for a function $g$ from $\dd{R}_{+}$ to $\dd{R}$ which is right-continuous with {left limits}. These discontinuous changes are counted by $N^{(n)}(\cdot)$ of \eq{Nn-t}, and an arrival and a service completion may simultaneously occur. To describe them precisely, we assume that arrival occurs first then departure occurs. Namely, arrival and departure are sequentially ordered when both of then occur simultaneously, and an intermediate state is introduced between them.

Thus, we define an $\dd{F}$-adapted process $X^{(n)}_{1}(\cdot) \equiv \{(L^{(n)}_{1}(t), R^{(n)}_{e,1}(t), R^{(n)}_{d,1}(t)); t \ge 0\}$ as
\begin{align*}
 & X^{(n)}_{1}(t) =
 \begin{cases}
  X^{(n)}(t-), & \Delta N^{(n)}_{e}(t) = 0,\\
  (L^{(n)}(t-) + 1,  T_{A}({N^{(n)}_{e}}(t)), R^{(n)}_{d}(t-)), & \Delta N^{(n)}_{e}(t) = 1,
 \end{cases}
\end{align*}
and call $X^{(n)}_{1}(t)$ an intermediate state. Then, just after time $t$, we have
\begin{align*}
 X^{(n)}(t) = \begin{cases}
  (L^{(n)}_{1}(t) -1, R^{(n)}_{e,1}(t), T_{S}(N^{(n)}_{d}(t))), & \Delta N^{(n)}_{d}(t) = 1,\\
  X^{(n)}_{1}(t), & \Delta N^{(n)}_{d}(t) = 0.
 \end{cases}
\end{align*}
We can see that the arrival-first assumption has no influence on the queue length.

To describe state changes through the intermediate state, we define other difference operators $\Delta_{e}$ and $\Delta_{d}$ as
\begin{align*}
 & \Delta_{e} f(X^{(n)})(t) = f(X^{(n)}_{1}(t)) - f(X^{(n)}(t-)),\\
 & \Delta_{d} f(X^{(n)})(t) = f(X^{(n)}(t)) - f(X^{(n)}_{1}(t)),
\end{align*}
where the function $f(X^{(n)})$ is defined by $f(X^{(n)})(t) = {f(X^{(n)}(t))}$ for $t \ge 0$. Recall the definition \eq{Nn-t} of $N^{(n)}(\cdot)$, and assume that
\begin{align}
 \label{eq:test-f}
 \mbox{$f(X^{(n)})(t)$ is continuous in $t \ge 0$ if $\Delta N^{(n)}(t) = 0$.}
\end{align}
Then, $\Delta f(X^{(n)})(t) = \Delta_{e} f(X^{(n)})(t) + \Delta_{d} f(X^{(n)})(t)$, so, from an elementary observation of the time evolution of $X^{(n)}(t)$, we have
\begin{align}
 \label{eq:fXn-t}
 f(X^{(n)}(t)) & = f(X^{(n)}(0)) + \int_{0}^{t} \sr{H}^{(n)} f(X^{(n)})(s) ds \nonumber\\
 & \quad + \sum_{s \in (0,t]} [\Delta_{e} f(X^{(n)})(s) + \Delta_{d} f(X^{(n)})(s)], \qquad f \in D_{b}^{p}(\dd{R}_{+}^{3}),
\end{align}
where the summation is well defined because $\Delta_{e} f(X^{(n)})(s)$ and $\Delta_{d} f(X^{(n)})(s)$ vanish except for finitely many $s$ in $(0,t]$.

\subsection{Stationary framework and Palm distributions}
\label{sec:stationary}

Our main interest is the stationary distributions of the indexed systems and {their} diffusion-scaled limit in heavy-traffic. By \lem{Xn-stability}, the {strong} Markov process $X^{(n)}(\cdot)$ is positive recurrent under the conditions (\sect{preliminaries}.a)--(\sect{preliminaries}.e).
We assume these conditions in this section. Then, for each $n \ge 1$, the Markov process $X^{(n)}(\cdot)$ has the stationary distribution. Hence, {it can} be a stationary process. In what follows, {$X^{(n)}(\cdot)$ is assumed to be} a stationary strong Markov process for all $n \ge 1$ unless otherwise stated. Recall the definition \eq{Fn-t} of $\sr{F}^{(n)}_{t}$, and let $\sr{F}^{(n)}_{\infty} = \sigma(\cup_{t \ge 0} \sr{F}^{(n)}_{t})$. Denote the stationary distribution of  $X^{(n)}(\cdot)$ by $\pi^{(n)}$, and define  probability measure $\dd{P}_{\pi^{(n)}}$ on $(\Omega,\sr{F}^{(n)}_{\infty})$ as
\begin{align*}
 \dd{P}_{\pi^{(n)}}(\Theta_{t}^{-1} A) = \dd{P}_{\pi^{(n)}}(A), \qquad A \in \sr{F}^{(n)}_{\infty},\ t \ge 0.
\end{align*}
Since we are only concerned with the sequence of $X^{(n)}(\cdot)$ for $n \ge 1$, we can assume that
\begin{align*}
 \dd{P}(\Theta_{t}^{-1} A) = \dd{P}(A), \qquad A \in \sr{F}, t \ge 0,
\end{align*}
by appropriately choosing $(\Omega,\sr{F},\dd{F})$ (for example, choosing $\sr{F}_{t} = \sigma(\cup_{n \ge 1} \sr{F}^{(n)}_{t})$), and consider $\dd{P}_{\pi^{(n)}}$ as the restriction of $\dd{P}$ to $(\Omega,\sr{F}^{(n)}_{\infty})$. In this setting, we denote the expectation of a random variable $X$ {on $(\Omega,\sr{F}^{(n)}_{\infty})$} simply by $\dd{E}(X)$, instead of $\dd{E}_{\pi^{(n)}}(X)$.

Thus, we have the stationary framework on the stochastic basis $(\Omega,\sr{F},\dd{F},\dd{P})$. {In} this framework, $X^{(n)}(\cdot)$ is a stationary process consistent with the time-shift $\Theta_{\bullet}$. Then, counting processes $N^{(n)}_{e}(\cdot)$ and $N^{(n)}_{d}(\cdot)$ are also stationary processes consistent with $\Theta_{\bullet}$ from their {construction}. Let
\begin{align*}
 \alpha^{(n)}_{e} = \dd{E}[N^{(n)}_{e}(1)], \qquad \alpha^{(n)}_{d} = \dd{E}[N^{(n)}_{d}(1)].
\end{align*}
We note the following simple fact.
\begin{lemma}
 \label{lem:alpha 1}
 Under the conditions (\sect{preliminaries}.a)--(\sect{preliminaries}.e), $\alpha^{(n)}_{e}$ and $\alpha^{(n)}_{d}$ are bounded for each $n \ge 1$. If the condition (\sect{preliminaries}.f) is further assumed, then they are uniformly bounded for all $n \ge 1$.
\end{lemma}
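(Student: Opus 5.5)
The plan is to use the time-change representation $N^{(n)}_{e}(t) = A(U^{(n)}(L^{(n)},t))$, $N^{(n)}_{d}(t) = S(V^{(n)}(L^{(n)},t))$ together with the uniform speed bounds. By (\sect{preliminaries}.a), $U^{(n)}(L^{(n)},t) \le \lambda^{(n)}_{\sup} t$ and $V^{(n)}(L^{(n)},t) \le \mu^{(n)}_{\sup} t$ pathwise. Since $A$ and $S$ are nondecreasing, this gives
\begin{align*}
 N^{(n)}_{e}(t) \le A(\lambda^{(n)}_{\sup} t), \qquad N^{(n)}_{d}(t) \le S(\mu^{(n)}_{\sup} t), \qquad t \ge 0.
\end{align*}
Stationarity does not make $N^{(n)}_{e}(1)$ equal to $A(\cdot)$ evaluated on a fixed interval, so the estimate has to be organized with some care.

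\textbf{Step 1: a long-time average.} By stationarity and consistency with $\Theta_{\bullet}$, $\dd{E}[N^{(n)}_{e}(t)] = \alpha^{(n)}_{e} t$ for all $t>0$. Hence, for any $t > 0$,
\begin{align*}
 \alpha^{(n)}_{e} = t^{-1}\dd{E}[N^{(n)}_{e}(t)] \le t^{-1}\dd{E}[A(\lambda^{(n)}_{\sup} t)].
\end{align*}

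\textbf{Step 2: bound the renewal function.} $A$ is a delayed renewal process whose inter-arrival times $T_{A}(j)$, $j \ge 1$, have unit mean by (\sect{preliminaries}.c). Therefore $\dd{E}[A(s)] \le 1 + m(s)$, where $m$ is the renewal function of the non-delayed process. I would check that this bound still holds under the joint stationary law $\dd{P}$, where the delay $T_{A}(0)$ may have a different distribution; it does, because $A(s) \le 1 + A^{0}(s)$ with $A^{0}$ counting only the i.i.d. part. The elementary renewal theorem gives $m(s)/s \to 1$, so there is $s_{0}$ with $m(s) \le 2s$ for $s \ge s_{0}$.

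\textbf{Step 3: conclude.} Choosing $t$ with $\lambda^{(n)}_{\sup} t \ge s_{0}$, Steps 1 and 2 give $\alpha^{(n)}_{e} \le t^{-1} + 2\lambda^{(n)}_{\sup}$. Letting $t \to \infty$ yields $\alpha^{(n)}_{e} \le \lambda^{(n)}_{\sup}$. The same argument gives $\alpha^{(n)}_{d} \le \mu^{(n)}_{\sup}$, which is the first claim.

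For uniformity in $n$ under (\sect{preliminaries}.f), it suffices to show that $\sup_{n} \lambda^{(n)}_{\sup} < \infty$ and $\sup_{n} \mu^{(n)}_{\sup} < \infty$. By \lem{h-limit}, $\sup_{u}|\widehat{\lambda}^{(n)}(u) - \lambda(u)| \to 0$. Also $\lambda^{(n)}_{\sup} = \sup_{u}\widehat{\lambda}^{(n)}(u)$, since the reparametrization $u \mapsto n^{1/2}u$ is a bijection of $\dd{R}_{+}$. Since $\lambda$ is bounded by (f1), $\lambda^{(n)}_{\sup} \le \sup\lambda + 1$ for all large $n$. The finitely many remaining $n$ are each finite by (\sect{preliminaries}.a), and $\mu^{(n)}_{\sup}$ is handled identically. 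Hence $\alpha^{(n)}_{e} \le \lambda^{(n)}_{\sup}$ and $\alpha^{(n)}_{d} \le \mu^{(n)}_{\sup}$ are uniformly bounded.

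The main obstacle is Step 2: justifying that the delay in $A$ under the stationary measure does not spoil the renewal bound. This is handled by dominating $A$ by $1$ plus an undelayed renewal count, which needs no moment assumption on $T_{A}(0)$. Positivity of the $T_{A}(j)$, $j \ge 1$, ensures the renewal function is finite.
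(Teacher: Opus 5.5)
Your proof is correct, but it takes a different route from the paper's.

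The paper argues along sample paths. It bounds the arrival epochs from below by $t^{(n)}_{e,j} \ge (\lambda^{(n)}_{\sup})^{-1}\sum_{i=0}^{j-1} T_{A}(i)$ and uses the strong law of large numbers to get $\lim_{j} j/t^{(n)}_{e,j} \le \lambda^{(n)}_{\sup}$. It then identifies $\alpha^{(n)}_{e}$ with the almost sure limit $\lim_{t\to\infty} N^{(n)}_{e}(t)/t$. For departures it uses $N^{(n)}_{d}(t) \le L^{(n)}(0) + N^{(n)}_{e}(t)$ to conclude $\alpha^{(n)}_{d} \le \alpha^{(n)}_{e}$.

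You work with expectations instead. Your ingredients are the time-change domination $N^{(n)}_{e}(t) \le A(\lambda^{(n)}_{\sup} t)$, the linearity $\dd{E}[N^{(n)}_{e}(t)] = \alpha^{(n)}_{e} t$, the bound $A(s) \le 1 + A^{0}(s)$ that removes the stationary delay, and the elementary renewal theorem. This buys two things:
\begin{itemize}
\item You never need the identification $\dd{E}[N^{(n)}_{e}(1)] = \lim_{t\to\infty} N^{(n)}_{e}(t)/t$ a.s. In the paper this tacitly requires ergodicity of the stationary version, which holds because the stationary distribution is unique but is not stated.
\item You need no control of $L^{(n)}(0)$ for the departure rate, since you bound $\alpha^{(n)}_{d} \le \mu^{(n)}_{\sup}$ directly through $S$.
\end{itemize}
The paper's route gives the comparison $\alpha^{(n)}_{d} \le \alpha^{(n)}_{e}$, which \lem{SDQ-basic 1} later sharpens to equality. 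The uniformity part under (\sect{preliminaries}.f) is the same in both proofs.

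One small slip: from $m(s) \le 2s$, letting $t \to \infty$ gives only $\alpha^{(n)}_{e} \le 2\lambda^{(n)}_{\sup}$. To reach $\alpha^{(n)}_{e} \le \lambda^{(n)}_{\sup}$, use $m(s) \le (1+\varepsilon)s$ for large $s$ and then let $\varepsilon \downarrow 0$. Any finite constant already suffices for the lemma, so this does not affect correctness.
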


\begin{proof}
 By the definition of $t^{(n)}_{e,j+1}$ and (\sect{preliminaries}.a), we have
 \begin{align*}
  t^{(n)}_{e,j+1} - t^{(n)}_{e,j} & = \inf\left\{t > 0; \int_{t^{(n)}_{e,j}}^{t^{(n)}_{e,j}+t} \lambda^{(n)}(L^{(n)}(s)) ds \ge T_{A}(j)\right\} \le  \frac 1{\lambda^{(n)}_{\sup}} T_{A}(j) < \infty,
 \end{align*}
by (\sect{preliminaries}.a). Hence, $t_{e,j} \ge (\lambda^{(n)}_{\sup})^{-1} \sum_{i=0}^{j-1} T_{A}(i)$, and $\lim_{j \to \infty} \frac 1j \sum_{i=0}^{j-1} T_{A}(i) = \dd{E}[T_{A}] = 1$ by the law of large numbers. Since $N^{(n)}_{e}(\cdot)$ is a stationary counting process, these facts yield
 \begin{align*}
  \alpha^{(n)}_{e} = \dd{E}[N^{(n)}_{e}(1)] & = \lim_{t \to \infty} {N^{(n)}_{e}}(t)/t = \lim_{j \to \infty} {N^{(n)}_{e}}(t^{(n)}_{e,j})/t^{(n)}_{e,j} = \lim_{j \to \infty} j/t^{(n)}_{e,j} \le \lambda^{(n)}_{\sup} < \infty.
 \end{align*}
 This implies $\alpha^{(n)}_{d} \le \alpha^{(n)}_{e} < \infty$ because $N^{(n)}_{d}(t) \le L^{(n)}(0) + N^{(n)}_{e}(t)$. We next assume (\sect{preliminaries}.f). Then, we can see that $\sup_{n \ge 1} \lambda^{(n)}_{\sup} < \infty$ because, for $\varepsilon> 0$, there is some $n_{0}$ such that
\begin{align*}
  \lambda^{(n)}_{\sup} = \sup_{u \in \dd{R}_{+}} \widehat{\lambda}^{(n)}(u) \le \varepsilon + \sup_{u \in \dd{R}_{+}} \lambda(u) < \infty, \qquad \forall n \ge n_{0},
\end{align*}
by \eq{limit-la-mu-n} and $\lambda^{(n)}_{\sup} < \infty$ for $1 \le n < n_{0}$ by (\sect{preliminaries}.a). Similarly, $\sup_{n \ge 1} \mu^{(n)}_{\sup} < \infty$ is proved.
\end{proof}

Since $\alpha^{(n)}_{e}$ and $\alpha^{(n)}_{d}$ are finite, we can define probability distributions $\dd{P}^{(n)}_{e}$ and $\dd{P}^{(n)}_{d}$ on $(\Omega,\sr{F})$ by
\begin{align}
 \label{eq:Palm 1}
 \dd{P}^{(n)}_{v}(A) = (\alpha^{(n)}_{v})^{-1} \dd{E}\left[ \int_{(0,1]} 1_{\Theta_s^{-1} A} N^{(n)}_{v}(ds) \right], \qquad A \in \sr{F}^{(n)}, v=e,d,
\end{align}
where $1_{A}$ is the indicator function of the event $A \in \sr{F}^{(n)}$. For $v=e,d$, $\dd{P}^{(n)}_{v}$ is called a Palm distribution {with respect to} the counting process $N^{(n)}_{v}$. The intuitive meaning of Palm distributions is briefly explained in \cite{Miya2025} (see \cite{Miya2024} for details).

We use the Palm distributions {to derive} the stationary equations as shown below. {We first note that} the stationarity of $X^{(n)}(\cdot)$ and the definition of the Palm distribution yield
\begin{align*}
 & \dd{E}\left[\int_{0}^{1} \sr{H}^{(n)} f(X^{(n)}(s)) ds\right] = \int_{0}^{1} \dd{E}\left[\sr{H}^{(n)} f(X^{(n)}(s))\right] ds = \dd{E}\left[\sr{H}^{(n)} f(X^{(n)}(0))\right],\\
 & \dd{E}\left[\sum_{s \in (0,1]} \Delta_{v} f(X^{(n)})(s)\right] = \alpha^{(n)}_{v} \dd{E}^{(n)}_{v}\left[\Delta_{v} f(X^{(n)})(0) \right], \qquad v= e,d,
\end{align*}
where $\dd{E}^{(n)}_{v}$ stands for the expectation under $\dd{P}^{(n)}_{v}$ for $v=e,d$. {Hence, taking the expectation of \eq{fXn-t} for $t=1$ with respect to $\dd{P}$}, we have a stationary equation:
\begin{align}
 \label{eq:BAR1}
 & \dd{E}\left[ \sr{H}^{(n)}f(X^{(n)}(0)) \right]  \nonumber\\
 & \quad + \alpha^{(n)}_{e} \dd{E}^{(n)}_{e}\left[\Delta_{e} f(X^{(n)})(0) \right] + \alpha^{(n)}_{d} \dd{E}^{(n)}_{d}\left[\Delta_{d} f(X^{(n)})(0) \right] = 0, \qquad f \in D_{b}^{p}(\dd{R}_{+}^{3}),
\end{align}
which is called a pre-limit BAR.

Since $X^{(n)}(\cdot)$ is a stationary process, we simply write $\dd{E}\left[ \sr{H}^{(n)}f(X^{(n)}(0)) \right]$ as $\dd{E}\left[ \sr{H}^{(n)}f(X^{(n)}) \right]$. The formula \eq{BAR1} is a special case of the rate conservation law (e.g., see \cite{Miya1994}), and referred to as a basic adjoint relationship, BAR for short.

\section{Main results}
\label{sec:main}
\setnewcounter

We are now ready to present {our} main results, which will be proved in \sectn{p-main}. Recall the partially diffusion-scaled process $\widehat{X}^{(n)}(\cdot)$ of \eq{h-LX}. In this section, we always assume the conditions (\sect{preliminaries}.a)--(\sect{preliminaries}.f). Under these conditions, $\widehat{X}^{(n)}(\cdot)$ has the unique stationary distribution for each $n \ge 0$ by \lem{Xn-stability}. Denote by $\widehat{X}^{(n)} \equiv (\widehat{L}^{(n)}, R^{(n)}_{e}, R^{(n)}_{d})$ a random vector subject to this stationary distribution.

Define the probability distribution $\nu^{(n)}$ on $\dd{R}_{+}$ as
\begin{align}
 \label{eq:hnu-B}
 \nu^{(n)}(B) = \dd{P}(\widehat{L}^{(n)} \in B), \qquad B \in \sr{B}_{+}.
\end{align}
We refer to this $\nu^{(n)}$ as the diffusion-scaled stationary distribution of the $n$-th model, and consider its sequence:
\begin{align*}
  \sr{S}_{sd} \equiv \{\nu^{(n)}; n \ge 1\},
\end{align*}
under the modeling conditions (\sect{preliminaries}.a)--(\sect{preliminaries}.e) and the heavy traffic condition (\sect{preliminaries}.f).

Recall $\beta(\cdot)$ of \eq{bsber-u}. Namely,
\begin{align*}
 \beta(u) = \frac {2 b(u)} {\sigma^{2}(u)}, \qquad u \in \dd{R}_{+},
\end{align*}
which is a key characteristic in our main results below. To consider the tightness of $\sr{S}_{sd}$, we assume the following condition in addition to the conditions (\sect{preliminaries}.a)--(\sect{preliminaries}.f).
\begin{itemize}
 \item [(\sect{preliminaries}.g)] There is a constant $b_{\infty} \in \dd{R}$ such that $\lim_{u \to \infty} b(u) = b_{\infty}$.
\end{itemize}
This condition is not hard to checked if the primitive data $\{\lambda^{(n)}(\cdot)); n \ge 1\}$ and $\{\mu^{(n)}(\cdot); n \ge 1\}$ are given, and one may guess that $b_{\infty} < 0$ implies the tightness of $\sr{S}_{sd}$. This is indeed true as shown below.

\begin{theorem}
 \label{thr:tight-2}
 For the state-dependent queue, assume the conditions (\sect{preliminaries}.a)--(\sect{preliminaries}.g), then $\sr{S}_{sd}$ is tight if $b_{\infty} < 0$.
\end{theorem}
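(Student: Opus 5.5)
We prove tightness with the pre-limit BAR \eq{BAR1}, using a test function that is exponential in the diffusion-scaled queue length and carries the usual residual-time corrections, following the 2-level $GI/G/1$ analysis of \cite{Miya2025}. Since $b_{\infty}<0$, fix $x_{0}$ large so that $b(u) \le b_{\infty}/2 <0$ for $u \ge x_{0}$. By \lem{h-limit}, for large $n$ we also have $\widehat{b}^{(n)}(u) \le b_{\infty}/4$ for $u\ge x_{0}$, uniformly in $u$. The plan has three steps: derive an asymptotic BAR with a moment generating function, bound the exponential moment of $\widehat{L}^{(n)}$, and conclude with Markov's inequality.

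The test function is
\begin{align*}
 f_{\theta}(u,v_{1},v_{2}) = e^{\theta n^{-1/2}u}\Bigl(1 + \eta^{(n)}_{e}(\theta) v_{1} + \eta^{(n)}_{d}(\theta)v_{2}\Bigr),
\end{align*}
or the multiplicative version $e^{\theta n^{-1/2}u - \eta_{e}(\theta n^{-1/2})v_{1} + \eta_{d}(\theta n^{-1/2})v_{2}}$ truncated in $v$. The constants are chosen so that the Palm jump terms $\dd{E}^{(n)}_{e}[\Delta_{e}f]$ and $\dd{E}^{(n)}_{d}[\Delta_{d}f]$ match the $\sr{H}^{(n)}$ term to second order in $n^{-1/2}\theta$. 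Here $\eta_{e}(s)$ solves $\dd{E}[e^{s-\eta_{e}(s)T_{A}}]=1$, and similarly for $\eta_{d}$. The finite third moments in (\sect{preliminaries}.c) make $\eta_{e}(s)=s + \tfrac12\sigma_{A}^{2}s^{2}+O(s^{3})$ valid. Because the residual times are reset to fresh $T_{A}(j)$, $T_{S}(j)$ at jump epochs, the Palm expectations reduce to deterministic functions of the queue length.

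Substituting into \eq{BAR1}, multiplying by $n$, and writing $\varphi^{(n)}(\theta)=\dd{E}[e^{\theta\widehat{L}^{(n)}}]$, we expect an asymptotic BAR of the form
\begin{align*}
 \dd{E}\Bigl[\bigl(\theta \widehat{b}^{(n)}(\widehat{L}^{(n)}) + \tfrac12\theta^{2}(\widehat{\sigma}^{(n)})^{2}(\widehat{L}^{(n)})\bigr)e^{\theta\widehat{L}^{(n)}}\Bigr] + \theta\, c^{(n)} = o(1)\,\varphi^{(n)}(\theta).
\end{align*}
Here $c^{(n)}$ is a boundary term from the idle time at $u=0$, which is bounded since $\widehat{\mu}^{(n)}$ is bounded and the idle probability is $O(n^{-1/2})$. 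The $o(1)$ error collects third-order Taylor remainders and residual-time terms. The boundedness of $\dd{E}[R_{e}^{(n)}]$ and $\dd{E}[R_{d}^{(n)}]$ under stationarity, uniformly in $n$, should follow from the third moments via $\dd{E}[R]\approx \dd{E}[T^{2}]/2$ times speed ratios.

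Next choose $\theta>0$ small, with $\theta\sup_{u}\sigma^{2}(u) < |b_{\infty}|/4$. Split the expectation over $\{\widehat{L}^{(n)}\le x_{0}\}$ and $\{\widehat{L}^{(n)}>x_{0}\}$. On the first set the integrand is bounded by $C e^{\theta x_{0}}$. On the second it is at most $-\theta(|b_{\infty}|/8)e^{\theta\widehat{L}^{(n)}}$. This gives $\dd{E}[e^{\theta\widehat{L}^{(n)}}1(\widehat{L}^{(n)}>x_{0})] \le C' + o(1)\varphi^{(n)}(\theta)$, so $\sup_{n}\varphi^{(n)}(\theta)<\infty$ for large $n$. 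Markov's inequality then yields $\nu^{(n)}((x,\infty))\le Ce^{-\theta x}$, which is tightness.

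The main obstacle is the a priori finiteness of $\varphi^{(n)}(\theta)$, needed to rearrange the BAR and absorb the $o(1)\varphi^{(n)}$ term, together with the legitimacy of the unbounded test function, since $D_{b}^{p}$ requires boundedness. We would first apply the argument to the truncated functions $e^{\theta n^{-1/2}(u\wedge m)}$, or to $\theta$-dependent bounded approximations, obtain bounds uniform in the truncation level $m$, and then let $m\to\infty$ by monotone convergence. If the truncation creates boundary errors, we would fall back on the linear test function $u$, giving a first-moment bound $\sup_{n}\dd{E}[\widehat{L}^{(n)}]<\infty$. That bound already suffices for tightness, and the exponential step would only be a sharpening.
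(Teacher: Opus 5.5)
Your main route fails under the stated hypotheses, because the quantity you want to bound is in general infinite. Condition (\sect{preliminaries}.c) only asks for finite third moments, and the state-dependent queue contains the constant-speed $M/G/1$ queue as a special case. Take $\dd{P}[T_{S}>t]\sim t^{-4}$. Then the stationary waiting time, and hence $L^{(n)}$, has a regularly varying tail of index $-3$, since arrivals pile up during a long service. So $\varphi^{(n)}(\theta)=\dd{E}[e^{\theta n^{-1/2}L^{(n)}}]=\infty$ for every $\theta>0$ and every $n$.

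Consequently no bound on $\dd{E}[e^{\theta n^{-1/2}(L^{(n)}\wedge m)}]$ can be uniform in $m$, because monotone convergence would then make $\varphi^{(n)}(\theta)$ finite. The step ``absorb $o(1)\varphi^{(n)}(\theta)$ and conclude $\sup_{n}\varphi^{(n)}(\theta)<\infty$'' therefore cannot succeed. The claimed error form is also unjustified, for two reasons. First, the residual-time truncation terms contain $r^{(n)}_{e}e^{\theta\widehat{L}^{(n)}}$ and $r^{(n)}_{d}e^{\theta\widehat{L}^{(n)}}$, and $R^{(n)}_{e},R^{(n)}_{d}$ are not independent of $L^{(n)}$ under $\dd{P}$. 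Second, a cutoff in $u$ produces Palm boundary terms containing the residual time that is \emph{not} reset at the jump, e.g.\ $\dd{E}^{(n)}_{e}[e^{-\zeta^{(n)}(n^{-1/2}\theta)\widecheck{R}^{(n)}_{d}(0-)}1(L^{(n)}(0-)=q)]$. So the Palm terms do not reduce to functions of the queue length.

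The fallback fails as stated too. For $f(u,\vc{v})=u\wedge m$ you have $\sr{H}^{(n)}f=0$ and jumps of $\pm1$, so \eq{BAR1} only returns the flow balance of \lem{SDQ-basic 1}. It gives no information about $\dd{E}[\widehat{L}^{(n)}]$. A uniform first-moment bound would need a test function quadratic in $u$ with residual-time corrections, plus uniform control of cross terms such as $\dd{E}[L^{(n)}R^{(n)}_{d}]$. That is the hard part, and the proposal does not address it.

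The missing idea is that no moment bound is needed at all. The paper takes a vaguely convergent subsequence $\nu^{(n_{k})}\os{v}{\rightarrow}\nu$ and applies \eq{asym-BAR>1} on $(x_{0},\infty)$ with $\theta<0$, so only bounded test functions are used. The right-hand side, $e^{\theta x_{0}}\dd{E}[\widehat{b}^{(n_{k})}(\widehat{L}^{(n_{k})})1(\widehat{L}^{(n_{k})}>x_{0})]$, is close to $e^{\theta x_{0}}b_{\infty}(1-\nu^{(n_{k})}([0,x_{0}]))$. It therefore still registers mass escaping to infinity, whereas the left-hand side, weighted by $e^{\theta u}$, loses that mass in the limit. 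Letting $k\to\infty$, then $\theta\uparrow0$ and $\varepsilon\downarrow0$, gives $b_{\infty}\nu((x_{0},\infty))\le b_{\infty}(1-\nu([0,x_{0}]))$. Hence $\nu(\dd{R}_{+})=1$ when $b_{\infty}<0$, and tightness follows from \lem{tight-1}.
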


\thr{tight-2} is proved in \sectn{p-tight-2}. For the state-dependent queue, we assume one more condition to find the weak limit of $\nu^{(n)}$ as $n \to \infty$. Namely,
\begin{itemize}
\item [(\sect{preliminaries}.h)]
If $\{\nu^{(n_{k})}; k \ge 1\}$ is a vaguely convergent subsequence of $\sr{S}_{sd}$, then its vague limit has a density.
\end{itemize}
This is the condition which we call the density condition in \sectn{introduction}.

\begin{theorem}
 \label{thr:SDQ-main-1}
 For the state-dependent queue satisfying the conditions (\sect{preliminaries}.a)--(\sect{preliminaries}.g), if (\sect{preliminaries}.h) holds, then $\nu^{(n)}$ converges weakly to the $\nu$ which has the unique density $h$ given by \eq{SDQ-h} below if and only if $b_{\infty} < 0$.
 \begin{align}
  \label{eq:SDQ-h}
  h(u) = \frac 1{C\sigma^{2}(u)} \exp\left(\int_{0}^{u} \beta(v) dv\right), \qquad u \in \dd{R}_{+},
 \end{align}
 where $C$ is a positive constant given by
\begin{align}
  \label{eq:C}
  C = \int_{0}^{\infty} \frac 1{\sigma^{2}(u)} \exp\left(\int_{0}^{u} \beta(v) dv\right) du.
 \end{align}
\end{theorem}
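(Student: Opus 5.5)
The plan is to prove sufficiency and necessity separately. In both directions the main tool is the pre-limit BAR \eq{BAR1}, applied with test functions that depend only on the scaled queue length together with the nominal residual times, and then passed to the limit along a vaguely convergent subsequence.

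\emph{Sufficiency} ($b_{\infty}<0$). By \thr{tight-2}, $\sr{S}_{sd}$ is tight. By \lem{tight-1}, it then suffices to show that every weakly convergent subsequence $\nu^{(n_k)} \os{w}{\to} \nu$ has the limit with density $h$ of \eq{SDQ-h}. By (\sect{preliminaries}.h), $\nu$ has a density $g$.

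To identify $g$, I would use in \eq{BAR1} the test functions
\[
f(u,\vc{v}) = \int_0^{n^{-1/2}u} 1(y> x)\,dy - n^{-1/2}\bigl(\phi_e(u)\,v_1 - \phi_d(u)\,v_2\bigr),
\]
suitably truncated so that they lie in $D^p_b(\dd{R}^3_+)$, as in the 2-level analysis of \cite{Miya2025}. The correction terms in $v_1,v_2$ are chosen so that the Palm jump terms $\Delta_e f,\Delta_d f$ cancel to first order. Expanding to second order in $n^{-1/2}$, and using the finite third moments in (\sect{preliminaries}.c) to control the remainders, should give an asymptotic BAR of the form
\[
\int_x^\infty \widehat b^{(n)}(u)\,d\nu^{(n)}(u) + \tfrac12(\widehat\sigma^{(n)})^2(x)\,\nu^{(n)}\text{-density near } x \approx 0 .
\]
More precisely, after also integrating in $x$ (that is, using $(u-x)^+$-type test functions), I expect the relation
\[
\int_0^\infty \Bigl(b(u)\,\varphi'(u) + \tfrac12\sigma^2(u)\,\varphi''(u)\Bigr)g(u)\,du = 0
\]
for $\varphi$ with $\varphi'(0)=0$, with the boundary term at $0$ vanishing. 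Here \lem{limit-hbn} is used to pass $\widehat b^{(n)}$ and $(\widehat\sigma^{(n)})^2$ to the limit.

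Choosing $\varphi''=1_{(x,y]}$ then yields
\[
\tfrac12\bigl(\sigma^2 g\bigr)(y) - \tfrac12\bigl(\sigma^2 g\bigr)(x) = \int_x^y b\,g\,du
\]
for a.e.\ $x<y$. Hence $\sigma^2 g$ is absolutely continuous, and $(\sigma^2 g)' = \beta\,\sigma^2 g$. The unique probability solution is $h$, and $C<\infty$ because $\beta(u)\to 2b_\infty/\sigma^2(\infty)$ stays negative for large $u$. Since $\nu$ is a probability measure by tightness, $\nu$ is uniquely determined, and \lem{tight-1} gives $\nu^{(n)}\os{w}{\to}\nu$.

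\emph{Necessity.} Suppose $\nu^{(n)}\os{w}{\to}\nu$ with density $h$. Then $C<\infty$ by definition. Since $\sigma^2$ is bounded above and away from $0$, this forces $\int_0^\infty \exp(\int_0^u\beta)\,du<\infty$. By (\sect{preliminaries}.g), $\beta(u)\to 2b_\infty/\sigma^2_\infty$ along $u\to\infty$, or at least $\beta$ is eventually close to $2b_\infty$ divided by a quantity bounded between positive constants. So $b_\infty\ge 0$ would make the integrand bounded away from $0$ eventually, a contradiction. Hence $b_\infty<0$.

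For a fully rigorous version, I would instead argue that if $b_\infty\ge0$, then any vague limit $\nu$ still satisfies the same ODE on $(0,\infty)$, now with possibly total mass less than $1$. Its density is then $c\,h_0$ with $h_0$ non-integrable, which forces $c=0$; thus $\nu=0$, contradicting weak convergence.

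The main obstacle is the derivation of the limiting BAR. One has to construct test functions whose jump terms under the Palm distributions $\dd{P}^{(n)}_e,\dd{P}^{(n)}_d$ produce exactly $\sigma_A^2\lambda$ and $\sigma_S^2\mu$ in the limit. This requires showing that the nominal residual times have asymptotically stationary-renewal moments, which uses \lem{alpha 1}. One also has to show that the contributions at the state-dependent speed changes and at the boundary $u=0$ vanish, which is where the density condition (\sect{preliminaries}.h) is needed: it guarantees that $\nu$ puts no mass at the discontinuity points of $b,\sigma$ or at $0$.
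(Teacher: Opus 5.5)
Your sufficiency argument takes a genuinely different route from the paper, and its skeleton is sound. Invoking \thr{tight-2} to pin down the normalising constant is in fact cleaner than the paper's wording. However, the step you yourself call the main obstacle is the substance of the proof, and it is only asserted.

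The paper never uses smooth test functions. It uses $e^{\theta u}g^{(n)}_{\theta}(\vc{v})1(n^{-1/2}u\in I)$, whose jump terms cancel exactly by \eq{SDQ-boundary1} except at the level $q^{(n)}_{x}$. The second-order part $H^{(n)}_{x}$ of that leftover, see \eq{Hn-x}, is killed by (\sect{preliminaries}.h) through \lem{Ln-x01}. This yields \eq{SDQ-BAR5} and, after differentiating in $x$, \eq{SDQ-BAR7}. That is exactly your $\tfrac12\sigma^{2}g(y)=-\int_{y}^{\infty}bg\,du$, with the paper's $g$ equal to $\sigma^{2}$ times yours.

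In your route, take $f=\varphi(n^{-1/2}u)-n^{-1/2}\varphi'(n^{-1/2}u)(v_{1}-v_{2})$ with $\varphi'(0)=0$ and residuals truncated at $n^{1/2}$. Then \eq{BAR1} becomes
\begin{align*}
&n^{-1}\dd{E}\bigl[\widehat{b}^{(n)}(\widehat{L}^{(n)})\varphi'(\widehat{L}^{(n)})\bigr]+n^{-1}\alpha^{(n)}_{e}\Bigl\{\dd{E}^{(n)}_{e}\bigl[\varphi''(\widehat{L}^{(n)}(0-))\bigl(\widecheck{R}^{(n)}_{d}(0-)-\tfrac12\bigr)\bigr]\\
&\qquad+\dd{E}^{(n)}_{d}\bigl[\varphi''(\widehat{L}^{(n)}(0))\bigl(\widecheck{R}^{(n)}_{e}(0)-\tfrac12\bigr)\bigr]\Bigr\}=o(n^{-1}).
\end{align*}
Converting these Palm terms into $\tfrac12 n^{-1}\dd{E}[(\widehat{\sigma}^{(n)})^{2}(\widehat{L}^{(n)})\varphi''(\widehat{L}^{(n)})]$ requires three things:
\begin{enumerate}[i)]
\item further BARs with $\varphi''(n^{-1/2}u)$ multiplied by $v_{1}v_{2}$, $v_{1}^{2}$, $v_{2}^{2}$, $v_{1}$ and $v_{2}$;
\item the uniform bound $\lambda^{(n)}-\mu^{(n)}=O(n^{-1/2})$;
\item uniform Palm second-moment bounds on truncated residuals, which come from the third-moment assumption via \lem{SDQ-moment} and identities of the type in \lem{SDQ-Red}.
\end{enumerate}
\lem{alpha 1} gives none of this.

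Moreover, $\varphi''$ must be Lipschitz in the pre-limit. With your choice $\varphi''=1_{(x,y]}$, the auxiliary functions jump at $q^{(n)}_{x}$ and $q^{(n)}_{y}$. They then leave single-level residues such as $\dd{E}^{(n)}_{e}[(\widecheck{R}^{(n)}_{d}(0-))^{2}1(L^{(n)}(0-)=q^{(n)}_{x})]$. Their vanishing is \eq{RLn-d}, and it does not follow from the mere absence of atoms. So prove the identity for smooth $\varphi$ and pass to $1_{(x,y]}$ only in the limit, using the density. Done that way, (\sect{preliminaries}.h) is needed only where you say it is, which is a lighter use than the paper's.

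The necessity half has a genuine hole at $b_{\infty}=0$. You claim that $b_{\infty}\ge 0$ keeps $\exp(\int_{0}^{u}\beta(v)\,dv)$ bounded away from $0$, but (\sect{preliminaries}.g) allows $b(u)\uparrow 0$ slowly. Take $\lambda=\mu\equiv 1$ and $b(u)=\lambda^{*}(u)-\mu^{*}(u)=-(1+u)^{-1/2}$ for $u>0$. Condition (\sect{preliminaries}.e) can be kept by lowering $\lambda^{(n)}$ by $n^{-3/4}$. Then $b_{\infty}=0$ and $\sigma^{2}\equiv\sigma_{A}^{2}+\sigma_{S}^{2}$. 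Also $\exp(\int_{0}^{u}\beta)=\exp(-4((1+u)^{1/2}-1)/\sigma^{2})$ is integrable, so $C<\infty$ and $h$ is a genuine probability density.

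Your ``fully rigorous'' variant breaks at the same place, because $h_{0}$ is integrable here. Hence you only exclude $b_{\infty}>0$. The paper's own argument for this case has the same defect. Its $x_{1}$, chosen so that $b>-\varepsilon$ on $[x_{1},\infty)$, depends on $\varepsilon$, and $g(x_{1})/\varepsilon$ need not blow up as $\varepsilon\downarrow 0$.

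What the method actually gives is this: $C=\infty$ rules out weak convergence, and $b_{\infty}<0$ guarantees it. For $b_{\infty}=0$ with $C<\infty$ nothing is proved, and the ``only if'' claim is doubtful there (compare \rem{SDQ-h}).
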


As discussed in \rem{SDQ-main}, we conjecture that the density condition (\sect{preliminaries}.h) can be removed from this theorem. For the level-dependent queue, this is the case, and much nicer results are obtained as shown below.

\begin{theorem}
 \label{thr:MLQ-main-1}
 For the multi-level queue satisfying the conditions (\sect{preliminaries}.a)--(\sect{preliminaries}.g), (\sect{preliminaries}.h) always holds, and therefore $\nu^{(n)}$ converges weakly to the $\nu$ of \thr{SDQ-main-1} if and only if $b_{\infty} < 0$.
\end{theorem}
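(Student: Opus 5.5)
Given \thr{SDQ-main-1}, the theorem reduces to verifying the density condition (\sect{preliminaries}.h) for the multi-level queue. Let $\{\nu^{(n_{k})}\}$ be a vaguely convergent subsequence with sub-probability limit $\nu$. We must show $\nu$ has a density. The plan is to use the pre-limit BAR \eq{BAR1} with test functions that depend only on $u$ locally inside a single level set $S_{i}$. There the speeds are constant, $\widehat{\lambda}^{(n)}_{i}$ and $\widehat{\mu}^{(n)}_{i}$, so the dynamics are those of an ordinary $GI/G/1$-type random walk.

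\textbf{Step 1: interior of each level set.} Fix $i$ and a smooth $g$ with compact support in the interior of $S_{i}$. As in the 2-level analysis of \cite{Miya2025}, take an exponential test function
\begin{align*}
 f(u,\vc{v}) = g(n^{-1/2}u)\, e^{-\theta n^{-1/2}(\ldots)},
\end{align*}
or equivalently a test function of the form $g(n^{-1/2} u)$ corrected by terms linear in $v_{1}$ and $v_{2}$:
\begin{align*}
 f(u,\vc{v}) = g(n^{-1/2}u) + n^{-1/2} g'(n^{-1/2}u)\,(\ldots)\,(v_{1}-v_{2}) + \cdots.
\end{align*}
The correction terms make the jump terms under the Palm expectations vanish to first order. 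Expanding to second order, using the finite third moments of \eq{moment-c} to control remainders and \lem{alpha 1} for the uniform bounds on the intensities, and then applying \lem{limit-hbn}, we aim for the limiting relation
\begin{align*}
 \int \Bigl( b_{i}\, g'(u) + \tfrac12 \sigma_{i}^{2} g''(u) \Bigr) \nu(du) = 0
\end{align*}
for all such $g$. This says $\nu$ restricted to the interior of $S_{i}$ is a distributional solution of $\tfrac12\sigma_{i}^{2}\nu'' - b_{i}\nu' = 0$. By Weyl's lemma for constant-coefficient ODEs, $\nu$ has a smooth density there of the form $c_{1} + c_{2} e^{\beta_{i} u}$.

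\textbf{Step 2: no atoms at the levels $\ell_{i}$ or at $0$.} With only finitely many levels in each bounded interval, what remains is to show $\nu(\{\ell_{i}\}) = 0$ and $\nu(\{0\}) = 0$. I would first try the BAR with $f(u,\vc v) = (n^{-1/2}u - \ell_{i})^{+}$-type ramp functions (with the same $v$-correction) localized near $\ell_{i}$. This should produce a bound of the form
\begin{align*}
 \dd{P}\bigl(|\widehat{L}^{(n)} - \ell_{i}| \le \delta\bigr) \le C\delta + o(1),
\end{align*}
with $C$ uniform in $n$, since the diffusion coefficient $\widehat{\sigma}^{(n)}$ is bounded away from zero. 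Letting $n\to\infty$ and then $\delta\downarrow0$ excludes an atom. At $0$, the same argument with $f = (\delta - n^{-1/2}u)^{+}$ should work: the idle-time contribution is of order $n^{-1/2}$ times the mass near the boundary.

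I expect this uniform local mass estimate near the discontinuity levels to be the main obstacle. The test function is not smooth there, and the Palm jump terms on the two sides of the level involve different speeds, so the correction terms have to be chosen piecewise. The error caused by jumps that cross a level must be shown to be $O(n^{-1/2})$ using the third-moment condition.

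Once Steps 1 and 2 are in place, $\nu$ is absolutely continuous, so (\sect{preliminaries}.h) holds, and \thr{SDQ-main-1} gives the ``if and only if'' conclusion.
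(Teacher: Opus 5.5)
\textbf{Gap: Step~2 is not proved.} Your reduction is correct. By \thr{SDQ-main-1} it suffices to verify (\sect{preliminaries}.h), and once $\nu$ is shown to be absolutely continuous on each open level set, the only possible singular part of $\nu$ is atoms at $0$ and at the levels $\ell_i$. That exclusion of atoms is exactly the part you leave unproved. Step~2 only says that ramp test functions ``should produce'' $\dd{P}(|\widehat{L}^{(n)}-\ell_i|\le\delta)\le C\delta+o(1)$. No derivation is given, and you yourself call it the main obstacle. For the multi-level queue, the whole content of ``(\sect{preliminaries}.h) always holds'' is that no mass concentrates where the speeds jump, so the proposal as written does not prove the theorem.

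If you pursue your route, note that the obstacle is not where you place it. The speeds enter the BAR only through $\sr{H}^{(n)}$, as multiplicative weights, so their jump at $\ell_i$ is harmless. What you must control are two things. First, the Palm boundary terms of the type $\alpha^{(n)}_e\dd{E}^{(n)}_d[(\widecheck{R}^{(n)}_e(0))^2 1(L^{(n)}(0)=q^{(n)}_y)]$ that arise at the kinks $y$ of your $C^{1,1}$ test function. Second, the conversion from Palm to stationary expectations of the residual-time cross terms that your second-order corrections produce. The second issue already arises in Step~1 and is only waved at there.

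\textbf{How the paper excludes atoms.} It needs no local mass estimate. With the exponential test functions $f^{(n)}_{n^{-1/2}\theta,[0,x]}$, the choice \eq{SDQ-boundary1} of $\eta^{(n)},\zeta^{(n)}$ makes all jump terms cancel except at the cut point $q^{(n)}_x$. The only residue is the boundary quantity $H^{(n)}_x$ of \eq{Hn-x}, whose subsequential limit $H_x$ exists by the limiting BAR.

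Because $b$ and $\sigma$ are constant on each $S_i$, taking $x=\ell_{i-1}$ and $y=\ell_i$ in \eq{BAR-nu=<x<y} gives the mgf of $\nu_i$ in closed form \eq{phi-Si2}. It is the mgf of an exponential density plus the atom terms $e^{\theta\ell_{i-1}}H_{\ell_{i-1}}-e^{\theta\ell_i}H_{\ell_i}$. The same $H_{\ell_{i-1}}$ that could be an atom at the right end of $S_{i-1}$ appears as an atom at the left end of $S_i=(\ell_{i-1},\ell_i]$, where $\nu_i$ has no mass. Hence $H_{\ell_{i-1}}=0$ for every interior level.

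The remaining endpoints are handled similarly:
\begin{itemize}
\item For $K<\infty$, the tail identity \eq{phi-SK2} on $(\ell_K,\infty)$ forces $H_{\ell_K}=0$.
\item At $0$, the idle term contributes only the constant $B_0$. This constant is eliminated by evaluating \eq{phi-S10} at $\theta=-\beta_1$, and \eq{phi-S11} then shows there is no atom at $0$.
\end{itemize}
This two-sided bookkeeping of the single boundary term $H_{\ell_i}$, across adjacent level sets, is the missing idea that replaces your Step~2.
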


Theorems \thrt{SDQ-main-1} and \thrt{MLQ-main-1} are proved in Sections \sect{p-SDQ-main-1} and \sect{p-MLQ-main-1}, respectively. 

\begin{remark}
\label{rem:SDQ-h}
From the proof of \thr{SDQ-main-1}, we can see that Theorems \thrt{SDQ-main-1} and \thrt{MLQ-main-1} hold without the condition (\sect{preliminaries}.g) if the condition $b_{\infty} < 0$ is replaced by $C < \infty$, where $C$ is defined by \eq{C}. However, the finiteness of $C$ may not be easily checked, so we have assumed (\sect{preliminaries}.g).
\end{remark}
 
\begin{remark}
 \label{rem:MLQ-main}
 For the multi-level queue which has finitely many levels, \citet{AtarMiya2026} prove that the process limit of the diffusion-scaled queue length process $\widetilde{L}^{(n)}(\cdot) \equiv \{n^{-1/2} L^{(n)}(nt); t \ge 0\}$ is the reflected diffusion $Z(\cdot)$ on $[0,\infty)$ which is obtained as the unique solution of the following stochastic {differential} equation, called SDE{:}
 \begin{align}
  \label{eq:SIE-Z}
  Z(t) = Z(0) + \int_{0}^{t} b(Z(s)) ds + \int_{0}^{t} \sigma(Z(s)) dW(s) + Y(t) \ge 0, \qquad t \ge 0,
 \end{align}
 where $W(\cdot)$ is the one-dimensional standard Brownian motion and $Y(\cdot)$ is the nondecreasing process {satisfying} $\int_{0}^{t} Z(s) dY(s) = 0$ for $t \ge 0$ {and $Y(0)=0$}. On the other hand, the stationary distribution of $Z(\cdot)$ is obtained in Theorem 3.2 of \cite{Miya2024b}, which is identical to the $\nu$ in \thr{SDQ-main-1} above. Hence, we have the limit interchange for the multi-level queue of \cite{AtarMiya2026}.
 \pend
\end{remark}
 
\begin{remark}\rm
\label{rem:SDQ-main}
From the proofs in \cite{AtarMiya2026}, we guess that the heavy-traffic limit of the queue length process for the state-dependent queue exists and is the solution of $Z(\cdot)$ of SDE \eq{SIE-Z}. Based on this heavy-traffic limit and the tightness of \thr{tight-2}, we expect that \thr{SDQ-main-1} is proved without the density condition (\sect{preliminaries}.h). However, their verifications require various techniques for process limits as in \cite{AtarMiya2026}. This is beyond the scope of this paper, so we leave them as conjectures.
\end{remark}

\section{Asymptotic BAR}
\label{sec:asym-BAR}
\setnewcounter

In this section, we derive the BAR for the $n$-th state-dependent queue, and consider its asymptotic form as $n$ goes to infinity, which is called an asymptotic BAR, and will be used to prove Theorems \thrt{SDQ-main-1} and \thrt{MLQ-main-1}. For deriving the asymptotic BAR, we take four steps, refining the BAR approach of \cite{Miya2025}. The first step considers some basic facts, the second step proves auxiliary lemmas for asymptotic expansions of several quantities, and the third and {fourth} steps compute the continuous and discontinuous components of the asymptotic BAR, respectively. Those four steps are given in Sections \sect{basic}, \sect{test}, \sect{continuous} and \sect{discontinuous}. In this section, $n$ is a fixed positive integer unless stated otherwise.

\subsection{Basic facts}
\label{sec:basic}

We consider some properties of $\alpha^{(n)}_{e}$ and $L^{(n)}(\cdot)$ under the Palm distributions. Similarly to Lemma 4.1 of \cite{Miya2025}, we have the following lemma. Recall that all the sample paths are right-continuous with left limits, and $X^{(n)} = (L^{(n)}, R^{(n)}_{e}, R^{(n)}_{d})$ is a random vector {subject to} the stationary distribution of $X^{(n)}(\cdot)$.
\begin{lemma}
 \label{lem:SDQ-basic 1}
 Under {the conditions} (\sect{preliminaries}.a)--(\sect{preliminaries}.f), we have $\alpha^{(n)}_{e} = \alpha^{(n)}_{d}$, and
 \begin{align}
  \label{eq:SDQ-balance 1}
  & \dd{P}^{(n)}_{e}[L^{(n)}(0-) = \ell] = \dd{P}^{(n)}_{d}[L^{(n)}(0) = \ell], \qquad \ell \ge 0,\\
  \label{eq:SDQ-alpha 1}
  & \alpha^{(n)}_{e} = \sum_{\ell = 0}^{\infty} \lambda^{(n)}(\ell) \dd{P}\left[L^{(n)} = \ell \right],\\
  \label{eq:SDQ-alpha 2}
  & \alpha^{(n)}_{d} = \sum_{\ell = 1}^{\infty} \mu^{(n)}(\ell) \dd{P}\left[L^{(n)} = \ell \right].
 \end{align}
\end{lemma}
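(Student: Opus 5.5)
The plan is to apply the pre-limit BAR \eq{BAR1} with three choices of bounded test function. For the rate identities I use functions of the nominal residual times alone, as in Lemma 4.1 of \cite{Miya2025}. For the balance \eq{SDQ-balance 1} I use indicator functions of the queue length.

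For \eq{SDQ-alpha 1}, I choose $f(u,\vc{v}) = v_{1} \wedge c$ for a constant $c>0$. This $f$ belongs to $D_{b}^{p}(\dd{R}_{+}^{3})$, satisfies \eq{test-f}, and has $\frac{\partial^{+}}{\partial v_{1}} f = 1(v_{1}<c)$. Hence $\sr{H}^{(n)} f(X^{(n)}) = -\lambda^{(n)}(L^{(n)}) 1(R^{(n)}_{e}<c)$. At an arrival, $R^{(n)}_{e}(0-)=0$ jumps to $T_{A}(N^{(n)}_{e}(0))$, so $\Delta_{e} f = T_{A} \wedge c$. Departures do not change $v_{1}$, so $\Delta_{d} f = 0$. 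Under $\dd{P}^{(n)}_{e}$ the new inter-arrival time is independent of the past and distributed as $T_{A}$. This follows from the renewal structure (\sect{preliminaries}.b), because $T_{A}(j)$ is not used before the $j$-th arrival. Therefore \eq{BAR1} gives
\begin{align*}
 \dd{E}\left[\lambda^{(n)}(L^{(n)}) 1(R^{(n)}_{e} < c)\right] = \alpha^{(n)}_{e} \dd{E}[T_{A} \wedge c].
\end{align*}
Letting $c \to \infty$ with monotone convergence and $\dd{E}[T_{A}]=1$ yields \eq{SDQ-alpha 1}.

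For \eq{SDQ-alpha 2}, I repeat the argument with $v_{2} \wedge c$. The factor $1(u>0)$ in $\sr{H}^{(n)}$ restricts the sum to $\ell \ge 1$. At a departure the new service requirement $T_{S}(N^{(n)}_{d}(0))$ is independent of the past under $\dd{P}^{(n)}_{d}$. One subtlety is that $R^{(n)}_{d}$ is frozen at an empty system. However, a departure always occurs while $L^{(n)}(0-)\ge1$, and the new requirement is drawn at that instant, so the same computation goes through.

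For the remaining claims I take $f(u,\vc{v}) = 1(u \le \ell)$. Then $\sr{H}^{(n)} f = 0$ and \eq{test-f} holds. An arrival gives $\Delta_{e} f = -1(L^{(n)}(0-)=\ell)$. A departure gives $\Delta_{d} f = 1(L^{(n)}(0)=\ell)$, where $L^{(n)}(0)$ is the post-departure level. So \eq{BAR1} becomes
\begin{align*}
 \alpha^{(n)}_{e}\dd{P}^{(n)}_{e}[L^{(n)}(0-)=\ell] = \alpha^{(n)}_{d}\dd{P}^{(n)}_{d}[L^{(n)}(0)=\ell], \qquad \ell \ge 0.
\end{align*}
Simultaneous events are handled by the intermediate-state convention. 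The departure's pre-level is then $L^{(n)}_{1}$, and the arrival-first ordering still makes each up-crossing of $\ell\to\ell+1$ match a down-crossing of $\ell+1\to\ell$. Summing over $\ell$ gives $\alpha^{(n)}_{e}=\alpha^{(n)}_{d}$, because both Palm distributions are probability measures. Dividing by the common $\alpha$ then gives \eq{SDQ-balance 1}.

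The main obstacle is justifying the Palm independence $\dd{E}^{(n)}_{e}[T_{A}(N^{(n)}_{e}(0))\wedge c]=\dd{E}[T_{A}\wedge c]$, and the analogous statement for services. The Palm measure is defined through the stationary version, so I cannot simply condition on the past. I would prove it from \eq{Palm 1}: write the expectation as a sum over arrival epochs in $(0,1]$. The event that the $j$-th arrival lies in $(0,1]$ together with $\sr{F}^{(n)}_{t^{(n)}_{e,j}-}$ is independent of $T_{A}(j)$ under the constructed process. I would then pass this to the stationary version by the positive recurrence in \lem{Xn-stability} and the consistency with $\Theta_{\bullet}$. Finiteness of $\alpha^{(n)}_{e}$ from \lem{alpha 1} makes every term integrable.
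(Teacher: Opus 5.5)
Your proposal is correct and is essentially the paper's proof. The paper applies \eq{BAR1} to $f(\vc{x}) = x_{1}\wedge(\ell+1)$, which gives the cumulative version $\alpha^{(n)}_{e}\dd{P}^{(n)}_{e}[L^{(n)}(0-)\le \ell] = \alpha^{(n)}_{d}\dd{P}^{(n)}_{d}[L^{(n)}(0)\le \ell]$ of your indicator identity (then lets $\ell\to\infty$ and takes differences), and it uses residual-time test functions for \eq{SDQ-alpha 1} and \eq{SDQ-alpha 2}. Your truncation $v_{1}\wedge c$ with monotone convergence, and your explicit check that $R^{(n)}_{e}(0)$ is distributed as $T_{A}$ under $\dd{P}^{(n)}_{e}$, are more careful than the paper, which plugs in the unbounded $f(\vc{x}) = x_{2}$ and uses this Palm property tacitly; that property follows directly for the process started from $\pi^{(n)}$, because the event $\{t^{(n)}_{e,j}\in(0,1]\}$ is independent of $T_{A}(j)$, so no appeal to positive recurrence is needed.
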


\begin{proof}
 For each integer $\ell \ge 0$, let $f(\vc{x}) = x_{1} \wedge (\ell+1)$ for $\vc{x} \in \dd{R}_{+}^{3}$, then obviously $f \in D_{b}^{p}(\dd{R}_{+}^{3})$. Applying this $f$ to \eq{BAR1}, we have
 \begin{align}
  \label{eq:SDQ-alpha}
  \alpha^{(n)}_{e} \dd{P}^{(n)}_{e}[L^{(n)}(0-) \le \ell] = \alpha^{(n)}_{d} \dd{P}^{(n)}_{d}[L^{(n)}(0) \le \ell], \qquad \ell \ge 0{,}
 \end{align}
 because $\sr{H}^{(n)}f(X^{(n)}(0)) = 0$ and
 \begin{align*}
  \Delta_{e} f(X^{(n)})(0) & = L_{1}^{(n)}(0) \wedge (\ell+1) - L^{(n)}(0-) \wedge (\ell+1) \\
  & = 1(L^{(n)}(0-) \le \ell, \Delta N^{(n)}_{e}(0) = 1),\\
  \Delta_{d} f(X^{(n)})(0) & = L^{(n)}(0) \wedge (\ell+1) - L^{(n)}_{1}(0) \wedge (\ell+1) \\
  & = - 1(L^{(n)}(0) \le \ell, \Delta N^{(n)}_{d}(0) = 1).
 \end{align*}
 Letting $\ell \to \infty$ in \eq{SDQ-alpha}, we have $\alpha^{(n)}_{e} = \alpha^{(n)}_{d}$. Applying this fact to \eq{SDQ-alpha}, we have \eq{SDQ-balance 1}. We next let $f(\vc{x}) = x_{2}$, and apply it to \eq{BAR1}, then
 \begin{align*}
  - \sum_{\ell = 0}^{\infty} \lambda^{(n)}(\ell) \dd{P}\left[L^{(n)}(0) = \ell\right] + \alpha^{(n)}_{e} = 0
 \end{align*}
 because $\dd{E}[T_{A}] = 1$. Hence, we have \eq{SDQ-alpha 1} because $L^{(n)}(0)$ has the same distribution as $L^{(n)}$ under $\dd{P}$. Similarly, we have \eq{SDQ-alpha 2} from \eq{BAR1} for $f(\vc{x}) = x_{3}$.
\end{proof}

In what follows, we always replace $\alpha^{(n)}_{d}$ by $\alpha^{(n)}_{e}$ because $\alpha^{(n)}_{{d}} = \alpha^{(n)}_{e}$ by \lem{SDQ-basic 1}. This simplifies the BAR \eq{BAR1}.

\subsection{Test functions and pre-limit BAR}
\label{sec:test}

Recall that the Markov process $X^{(n)}(\cdot)$ describes the $n$-th state-dependent queue. Recall that $X^{(n)} \equiv (L^{(n)}, R^{(n)}_{e}, R^{(n)}_{d})$ is a random vector subject to the stationary distribution of $X^{(n)}(\cdot)$, and $\widehat{X}^{(n)} \equiv (\widehat{L}^{(n)}, R^{(n)}_{e}, R^{(n)}_{d})$ is defined with $\widehat{L}^{(n)} = n^{-1/2} L^{(n)}$. To prove \thr{SDQ-main-1}, we compute a BAR for $\widehat{X}^{(n)}(\cdot)$, called the $n$-th pre-limit BAR, and derive the weak limit of its stationary distribution $\nu^{(n)}$ of $\widehat{L}^{(n)}$ as $n \to \infty$. 

A key to this BAR approach is a suitable choice of a test function for the BAR \eq{BAR1}. To this end, we take an exponential-type test function following \cite{Miya2025}, which is originally introduced in \cite{Miya2017}. For this test function, we first define a function $g^{(n)}_{\theta}$ for $\theta \in \dd{R}$ as
\begin{align*}
 & g^{(n)}_{\theta}(\vc{v}) = e^{ - \eta^{(n)}(\theta) (v_{1} \wedge n^{1/2}) - \zeta^{(n)}(\theta) (v_{2} \wedge n^{1/2})}, \qquad \vc{v}= (v_{1},v_{2}) \in \dd{R}_{+}^{2},
\end{align*}
where $\eta^{(n)}(\theta)$ {and} $\zeta^{(n)}(\theta)$ are the solutions of the following equations for each $\theta \in \dd{R}${:}
\begin{align}
 \label{eq:SDQ-boundary1}
 e^{\theta} \dd{E}\left[e^{-\eta^{(n)}(\theta) (T_{A} \wedge n^{1/2})}\right] = 1, \qquad e^{-\theta} \dd{E}\left[e^{-\zeta^{(n)}(\theta) (T_{S} \wedge n^{1/2})}\right] = 1.
\end{align}
These solutions exist and are finite because $\eta^{(n)}(\theta)$ is the inverse function of the Laplace transform of the finite positive random variable $T_{A} \wedge n^{1/2}${,} which takes the value $e^{-\theta}$. Similarly, $\zeta^{(n)}(\theta)$ is determined.

Let us consider the asymptotic behavior of $\eta^{(n)}(\theta)$ and $\zeta^{(n)}(\theta)$. Since they are {infinitely differentiable} functions of $\theta$, by twice differentiating the equations in \eq{SDQ-boundary1}, the following lemma can be obtained by their Taylor expansions around the origin. Its proof can be found in \cite[Lemma 5.8]{BravDaiMiya2024}.
\begin{lemma}
 \label{lem:SDQ-eta-zeta1}
 Under {the conditions} (\sect{preliminaries}.b)--(\sect{preliminaries}.c), we have the following expansions as $n^{-1/2} \theta  \to 0${:}
 \begin{align}
  \label{eq:SDQ-eta1}
  & \eta^{(n)}(n^{-1/2}\theta) = n^{-1/2} \theta  + \frac 12 \sigma_{A}^{2} n^{-1} \theta^{2} + o(n^{-1} \theta^{2}),\\
  \label{eq:SDQ-zeta1}
  & \zeta^{(n)}(n^{-1/2}\theta) = - n^{-1/2} \theta  + \frac 12 \sigma_{S}^{2} n^{-1} \theta^{2} + o(n^{-1} \theta^{2}).
 \end{align}
 Furthermore, there are constants $d_{A}, d_{S}, a > 0$ such that, for $n \ge 1$ and $\theta \in \dd{R}$ satisfying $n^{-1/2}|\theta| < a$,
 \begin{align}
  \label{eq:SDQ-eta-zeta1}
  \big|\eta^{(n)}(n^{-1/2}\theta) (u_{1} \wedge n^{1/2}) & + \zeta^{(n)}(n^{-1/2}\theta) (u_{2} \wedge n^{1/2}) \big| \nonumber\\
  & \le |\theta| \left(d_{A} (n^{-1/2} u_{1} \wedge 1) + d_{S} (n^{-1/2}u_{2} \wedge 1)\right).
 \end{align}
\end{lemma}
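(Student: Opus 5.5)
The plan is to study the two defining equations in \eq{SDQ-boundary1} separately, treating $\eta^{(n)}$ and $\zeta^{(n)}$ symmetrically. For $\eta^{(n)}$, set $\phi_{n}(s) = \log \dd{E}[e^{-s (T_{A} \wedge n^{1/2})}]$. It is an infinitely differentiable, strictly convex and strictly decreasing function of $s \in \dd{R}$; all exponential moments exist because $T_{A} \wedge n^{1/2} \le n^{1/2}$. The equation then reads $\phi_{n}(\eta^{(n)}(\theta)) = -\theta$, so $\eta^{(n)} = \phi_{n}^{-1}(-\cdot)$ is smooth by the inverse function theorem, with $\eta^{(n)}(0)=0$. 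Differentiating twice at $\theta = 0$ gives $\eta^{(n)\prime}(0) = 1/m_{n}$ and $\eta^{(n)\prime\prime}(0) = v_{n}/m_{n}^{3}$. Here $m_{n} = \dd{E}[T_{A} \wedge n^{1/2}]$ and $v_{n} = \mathrm{Var}(T_{A} \wedge n^{1/2})$.

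The main step is controlling the truncation. Because $\dd{E}[T_{A}^{3}] < \infty$ by \eq{moment-c}, we have $1 - m_{n} = \dd{E}[(T_{A} - n^{1/2})^{+}] \le \dd{E}[T_{A}^{3}]/n$ and $|v_{n} - \sigma_{A}^{2}| = O(n^{-1/2})$. Hence $1/m_{n} = 1 + O(n^{-1})$, and $n^{-1/2}\theta/m_{n} = n^{-1/2}\theta + O(n^{-3/2}|\theta|)$. I expect the main obstacle to be making this error $o(n^{-1}\theta^{2})$ uniformly. Since $|\theta|$ may be as small as we like, the bound $O(n^{-3/2}|\theta|)$ is only $o(n^{-1}\theta^{2})$ when $|\theta| \gg n^{-1/2}$, so I would state and use the expansion in the regime of interest, namely fixed $\theta$ with $n \to \infty$, which is how it is applied. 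In that regime the first-order error $n^{-3/2}$ is indeed $o(n^{-1})$.

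For the second-order term I would use Taylor's theorem with remainder. I need $\sup_{|s| \le a}|\eta^{(n)\prime\prime\prime}(s)|$ bounded uniformly in $n$. The third derivative of $\phi_{n}$ involves moments of $T_{A} \wedge n^{1/2}$ up to order three under exponentially tilted measures, with tilt of size $|\eta| \le Ca$. These tilted moments are bounded uniformly in $n$ provided the tilt direction is controlled. For $s \ge 0$ the weight $e^{-s T}$ is bounded by $1$. For $s<0$ with $|s|$ small, the weight is $e^{|s|(T\wedge n^{1/2})}$, which could blow up. This is the delicate point, and I would handle it with the bound $e^{|s|(T \wedge n^{1/2})} \le e^{a n^{1/2}|s|/a}$ in the scaled argument $s = n^{-1/2}\theta$. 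With $|\theta| < a$ small this weight is at most $e^{|\theta| n^{-1/2} T \wedge |\theta|}$, which is bounded. Together with the third-moment bound, this gives $\eta^{(n)}(n^{-1/2}\theta) = n^{-1/2}\theta/m_{n} + \frac12 n^{-1}\theta^{2} v_{n}/m_{n}^{3} + O(n^{-3/2}|\theta|^{3})$. Substituting the estimates for $m_{n}$ and $v_{n}$ then yields \eq{SDQ-eta1}. The same argument with $\theta$ replaced by $-\theta$ gives \eq{SDQ-zeta1}.

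Finally, for \eq{SDQ-eta-zeta1}, the uniform derivative bounds above give $|\eta^{(n)}(n^{-1/2}\theta)| \le d_{A} n^{-1/2}|\theta|$ for $n^{-1/2}|\theta|<a$, by the mean value theorem applied to the uniformly bounded $\eta^{(n)\prime}$. Multiplying by $u_{1}\wedge n^{1/2} = n^{1/2}(n^{-1/2}u_{1}\wedge 1)$ gives $|\eta^{(n)}(n^{-1/2}\theta)(u_{1}\wedge n^{1/2})| \le d_{A}|\theta|(n^{-1/2}u_{1}\wedge1)$. Treating the $\zeta^{(n)}$ term the same way and applying the triangle inequality completes the plan, as in \cite[Lemma 5.8]{BravDaiMiya2024}.
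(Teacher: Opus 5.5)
Your route is the paper's: implicit differentiation of \eq{SDQ-boundary1} and a Taylor expansion at the origin, with the truncation handled through $m_n=\dd{E}[T_A\wedge n^{1/2}]$, $v_n$ and the third moment. The paper itself only defers to Lemma 5.8 of Braverman--Dai--Miyazawa. Your remark that \eq{SDQ-eta1}--\eq{SDQ-zeta1} must be read with $\theta$ fixed (or bounded) and $n\to\infty$ is correct, and it is how the lemma is used later. For fixed $n$ and $\theta\to0$ the linear coefficient is $1/m_n\neq1$ whenever $\dd{P}[T_A>n^{1/2}]>0$.

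The gap is in the derivative control. Write $X=T_A\wedge n^{1/2}$ and $\kappa_j$ for the cumulants of $X$ under the tilt $e^{-\eta x}$ at $\eta=\eta^{(n)}(s)$. Then $\eta^{(n)\prime}=1/\kappa_1$, $\eta^{(n)\prime\prime}=\kappa_2/\kappa_1^3$ and $\eta^{(n)\prime\prime\prime}=-\kappa_3/\kappa_1^4+3\kappa_2^2/\kappa_1^5$.

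Your intermediate claim that $\sup_{|s|\le a}|\eta^{(n)\prime\prime\prime}(s)|$ is bounded uniformly in $n$ is false under third moments alone. For a tail $\dd{P}[T_A>x]\sim x^{-4}$ there are $s<0$ with $|s|$ of order $n^{-1/2}\log n$ at which the tilted law puts mass about $n^{-1/2}$ near $n^{1/2}$. There $\kappa_1\approx2$, $\kappa_2\approx n^{1/2}$ and $\kappa_3\approx n$, so $\eta^{(n)\prime\prime\prime}$ is of order $n$.

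Your scaled fix (tilt weight at most $e^{C|\theta|}$) is the right idea, but it has two problems:
\begin{itemize}
\item It only covers $|s|\le n^{-1/2}\Theta$ with $\Theta$ fixed. So it cannot deliver \eq{SDQ-eta-zeta1}, which is asserted for every $\theta$ with $n^{-1/2}|\theta|<a$; there the weight can be as large as $e^{an^{1/2}}$.
\item It is circular. The weight bound presupposes $|\eta^{(n)}(s)|\le C|s|$, which you only derive at the end from those same bounds.
\end{itemize}

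The repair is to prove the Lipschitz bound first, using only a lower bound on $\kappa_1$ and no upper control of tilted moments.
\begin{itemize}
\item Since $d\kappa_1/d\eta=-\kappa_2\le0$ (convexity of $\phi_n$), for $s\le0$ you get $\kappa_1\ge m_n\ge\dd{E}[T_A\wedge1]>0$.
\item For $0<s<a$, the inequality $X\ge T_A\wedge1$ forces $\eta^{(n)}(s)\le\bar\eta$, where $\dd{E}[e^{-\bar\eta(T_A\wedge1)}]=e^{-a}$. Then $\kappa_1\ge\dd{E}[Xe^{-\eta X}]\ge e^{-\bar\eta}\dd{E}[T_A1(T_A\le1)]>0$.
\end{itemize}
This gives $|\eta^{(n)}(s)|\le d_A|s|$ for all $n\ge1$ and $|s|<a$, hence \eq{SDQ-eta-zeta1}; the same argument works for $\zeta^{(n)}$. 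Along the Taylor path from $0$ to $n^{-1/2}\theta$ it also bounds the tilt weight by $e^{d_A|\theta|}$. After that, your third-moment argument gives the remainder $O(n^{-3/2}|\theta|^3)$ uniformly for bounded $\theta$, which completes \eq{SDQ-eta1}--\eq{SDQ-zeta1}.
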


By this lemma, $g^{(n)}_{n^{-1/2}\theta}(R^{(n)})$ is bounded by $e^{|\theta|(d_{A} + d_{S})}$ for sufficiently large $n$ for each $\theta \in \dd{R}$, where {we} recall that $R^{(n)} = (R^{(n)}_{e},R^{(n)}_{d})$. Hence, from the fact that
\begin{align*}
  \lim_{n \to \infty} g^{(n)}_{n^{-1/2}\theta}(R^{(n)}) & = \lim_{n \to \infty} e^{ - \eta^{(n)}(n^{-1/2}\theta) (R^{(n)}_{e} \wedge n^{1/2}) - \zeta^{(n)}(n^{-1/2}\theta) (R^{(n)}_{d} \wedge n^{1/2})} \nonumber\\
  & = \lim_{n \to \infty} e^{ - \theta (n^{-1/2}R^{(n)}_{e} \wedge 1) + \theta (n^{-1/2} R^{(n)}_{d} \wedge 1)} = 1,
\end{align*}
we have the following lemma.

\begin{lemma}
 \label{lem:SDQ-gn-limit}
 $g^{(n)}_{n^{-1/2}\theta}(R^{(n)})$ converges to $1$ as $n \to \infty$ for each $\theta \in \dd{R}$, and
 \begin{align}
  \label{eq:SDQ-Egn-limit}
  & \lim_{n \to \infty} \dd{E}\left[g^{(n)}_{n^{-1/2}\theta}(R^{(n)})\right] = 1, \qquad \lim_{n \to \infty} \dd{E}_{u}\left[g^{(n)}_{n^{-1/2}\theta}(R^{(n)})\right] = 1, \quad u = e, d.
 \end{align}
\end{lemma}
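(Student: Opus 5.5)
The plan is to use dominated convergence, with the uniform bound supplied by \lem{SDQ-eta-zeta1}. Fix $\theta \in \dd{R}$ and choose $n_{0}$ with $n^{-1/2}|\theta| < a$ for all $n \ge n_{0}$, where $a$ is the constant of \lem{SDQ-eta-zeta1}. Apply \eq{SDQ-eta-zeta1} with $u_{1} = R^{(n)}_{e}$ and $u_{2} = R^{(n)}_{d}$. Since $n^{-1/2}u_{i} \wedge 1 \le 1$, this gives
\begin{align*}
 0 < g^{(n)}_{n^{-1/2}\theta}(R^{(n)}) \le e^{|\theta|(d_{A}+d_{S})}, \qquad n \ge n_{0},
\end{align*}
deterministically, so the bound holds under $\dd{P}$, $\dd{P}^{(n)}_{e}$ and $\dd{P}^{(n)}_{d}$ alike.

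For the pointwise limit, \eq{SDQ-eta1} and \eq{SDQ-zeta1} give $n^{1/2}\eta^{(n)}(n^{-1/2}\theta) \to \theta$ and $n^{1/2}\zeta^{(n)}(n^{-1/2}\theta) \to -\theta$. The exponent can therefore be rewritten as $-n^{1/2}\eta^{(n)}(n^{-1/2}\theta)\,(n^{-1/2}R^{(n)}_{e} \wedge 1) - n^{1/2}\zeta^{(n)}(n^{-1/2}\theta)\,(n^{-1/2}R^{(n)}_{d}\wedge 1)$. The prefactors converge, so it suffices to show that $n^{-1/2}R^{(n)}_{v}\wedge 1 \to 0$. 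This is the step that needs care. $R^{(n)}$ changes with $n$, so the convergence is only in probability, not almost surely. The first claim of the lemma, ``converges to 1'', should accordingly be read in probability under each of the three measures, and that is enough for the expectations.

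The main work is to show that $R^{(n)}_{e}$ and $R^{(n)}_{d}$ are tight under $\dd{P}$ and under the Palm measures. Under the Palm measure $\dd{P}^{(n)}_{e}$, the time $0$ is an arrival epoch, so $R^{(n)}_{e}(0) = T_{A}(N^{(n)}_{e}(0))$, which is distributed as $T_{A}$. Its law does not depend on $n$, and it is tight.

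Under $\dd{P}$, use the BAR \eq{BAR1} with $f(\vc{x}) = (x_{2}\wedge c)^{2}/2$. The $\sr{H}^{(n)}$ term yields $\dd{E}[\lambda^{(n)}(L^{(n)})(R^{(n)}_{e}\wedge c)1(R^{(n)}_{e}<c)]$. The jump term equals $\alpha^{(n)}_{e}$ times the difference of two quantities: $\dd{E}[(T_{A}\wedge c)^{2}]/2$, and the value of $f$ just before the jump, which is $0$. Letting $c\to\infty$ and using (\sect{preliminaries}.a) together with \lem{alpha 1} gives
\begin{align*}
 \dd{E}[R^{(n)}_{e}] \le \frac{\alpha^{(n)}_{e}\,\dd{E}[T_{A}^{2}]}{2\,\lambda^{(n)}_{\inf}}.
\end{align*}
The right-hand side is uniformly bounded in $n$: $\lambda^{(n)}_{\inf}$ is bounded away from $0$ uniformly by \eq{limit-la-mu-n} and (f1). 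The same argument, with $\mu$ and $T_{S}$, gives a uniform bound on $\dd{E}[R^{(n)}_{d}]$. The rate equals $\mu^{(n)}(L^{(n)})1(L^{(n)}>0)$, and the term with $L^{(n)}=0$ is absent. This does not cause a problem, because $R^{(n)}_{d}$ stays frozen while the queue is empty. I would handle it by checking that $\dd{P}(L^{(n)}=0)$ carries no extra mass beyond what the bound needs. If this fails, I would instead use the Palm inversion formula to express the law of $R^{(n)}_{d}$ through the stationary equilibrium distribution of $T_{S}$, which has finite mean by \eq{moment-c}.

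Remaining cases. The variables $R^{(n)}_{d}$ under $\dd{P}^{(n)}_{e}$ and $R^{(n)}_{e}$ under $\dd{P}^{(n)}_{d}$ are handled by a second BAR identity, with $f = x_{2}\wedge c$ (and its analogue in $x_{3}$) multiplied by an indicator that isolates the jump. The resulting Palm expectation equals a stationary expectation plus $O(1)$ terms. Alternatively, one can use $\dd{P}^{(n)}_{e}(B)\le (\alpha^{(n)}_{e})^{-1}\cdots$ via the uniform bounds on intensities from \lem{alpha 1}. In every case Markov's inequality then gives $n^{-1/2}R^{(n)}_{v}\to 0$ in probability. Bounded convergence, using the uniform bound above, completes \eq{SDQ-Egn-limit}.
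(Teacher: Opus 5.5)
Your skeleton is the paper's: the deterministic bound from \eq{SDQ-eta-zeta1} plus bounded convergence. You are right that the paper's one-line limit tacitly needs $n^{-1/2}R^{(n)}_{v}\to 0$ in probability under each of $\dd{P}$, $\dd{P}^{(n)}_{e}$ and $\dd{P}^{(n)}_{d}$, because $R^{(n)}$ moves with $n$. Your bound on $\dd{E}[R^{(n)}_{e}]$ and your arguments for the freshly drawn residuals are fine.

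The gap is in the remaining tightness. You call it the main work, but you do not establish it.
\begin{enumerate}[a)]
\item For $R^{(n)}_{d}(0-)$ under $\dd{P}^{(n)}_{e}$ and $R^{(n)}_{e}(0)$ under $\dd{P}^{(n)}_{d}$, neither of your suggestions works.
\begin{itemize}
\item An indicator in the residual variables is not admissible. Functions in $D_{b}^{p}(\dd{R}_{+}^{3})$ must be continuous in $(v_{1},v_{2})$, and this is what makes evaluating $\Delta_{e}$ at the left-limit state legitimate. Used formally, $f=(v_{2}\wedge c)1(v_{1}>0)$ would yield $\dd{E}^{(n)}_{e}[R^{(n)}_{d}(0-)]=0$.
\item An indicator in $u$, as in \lem{SDQ-Red}, only gives level-by-level bounds $\dd{E}^{(n)}_{e}[\widecheck{R}^{(n)}_{d}(0-)1(L^{(n)}(0-)=q^{(n)}_{x})]\le \dd{E}[T_{S}]$. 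These do not sum to a uniform bound.
\item The fallback through \lem{alpha 1} also fails. That lemma bounds only the mean rates, and $N^{(n)}_{e}$ need not have a (bounded) stochastic intensity, so there is no domination $\dd{P}^{(n)}_{e}(B)\le C\,\dd{P}(B)$.
\end{itemize}
\item $R^{(n)}_{d}$ on $\{L^{(n)}=0\}$ under $\dd{P}$ is only gestured at.
\end{enumerate}

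The missing idea is to replace the indicator by a continuous factor that vanishes at the pre-jump state. Take $f(u,\vc{v})=(v_{1}\wedge c)(v_{2}\wedge c')$. Then $\Delta_{e}f=(T_{A}\wedge c)(R^{(n)}_{d}(0-)\wedge c')$ and $\Delta_{d}f=(R^{(n)}_{e}(0)\wedge c)(T_{S}\wedge c')$, where the fresh $T_{A}$, $T_{S}$ are independent of the other factor. So \eq{BAR1} gives
\begin{align*}
 & \alpha^{(n)}_{e}\big(\dd{E}[T_{A}\wedge c]\,\dd{E}^{(n)}_{e}[R^{(n)}_{d}(0-)\wedge c'] + \dd{E}[T_{S}\wedge c']\,\dd{E}^{(n)}_{d}[R^{(n)}_{e}(0)\wedge c]\big)\\
 & \quad = \dd{E}[\lambda^{(n)}(L^{(n)})1(R^{(n)}_{e}<c)(R^{(n)}_{d}\wedge c')] + \dd{E}[\mu^{(n)}(L^{(n)})1(L^{(n)}>0)(R^{(n)}_{e}\wedge c)1(R^{(n)}_{d}<c')].
\end{align*}
The right side is at most $\lambda^{(n)}_{\sup}\dd{E}[R^{(n)}_{d}]+\mu^{(n)}_{\sup}\dd{E}[R^{(n)}_{e}]$. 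Since $\alpha^{(n)}_{e}\ge\lambda^{(n)}_{\inf}$ by \eq{SDQ-alpha 1}, letting $c,c'\to\infty$ bounds both Palm means uniformly in $n$, provided the stationary means are uniformly bounded.

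For $\dd{E}[R^{(n)}_{d}]$, including the part on $\{L^{(n)}=0\}$, you have two options.
\begin{itemize}
\item Use \eq{SDQ-Rd 2} of \lem{SDQ-moment}.
\item Use $f=(v_{1}\wedge c)\psi(v_{2})1(u=0)$ directly. Comparing with the case $\psi\equiv 1$ and letting $c\to\infty$ gives $\dd{E}[\psi(R^{(n)}_{d})1(L^{(n)}=0)]=\dd{E}[\psi(T_{S})]\,\dd{P}[L^{(n)}=0]$. This reflects that the frozen value is a fresh service time.
\end{itemize}
With these bounds, Markov's inequality and your bounded-convergence step finish the proof.
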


We now define the test functions $f^{(n)}_{\theta,I}$ for $\theta \in \dd{R}$ and interval $I \subset \dd{R}_{+}$.
\begin{align*}
   & f^{(n)}_{\theta,I}(u,\vc{v}) = e^{\theta u} g^{(n)}_{\theta}(v_{1},v_{2}) 1(n^{-1/2} u \in I), \qquad (u,\vc{v}) = (u,v_{1},v_{2}) \in \dd{R}_{+}^{3},
\end{align*}
and introduce the following notation for the Palm expectation parts for $f = f^{(n)}_{\theta,I}(u,\vc{v})$ in the BAR \eq{BAR1}.
\begin{align}
\label{eq:EnDI}
 & E^{(n)}_{\Delta,I}(\theta) = \dd{E}^{(n)}_{e}\left[\Delta_{e} f^{(n)}_{n^{-1/2}\theta,I}(X^{(n)})(0) \right] + \dd{E}^{(n)}_{d}\left[\Delta_{d} f^{(n)}_{n^{-1/2}\theta,I}(X^{(n)})(0) \right].
\end{align}

We will use these function and notation for $I = [0,x]$ and $I = (x,y]$ for $x, y \in \dd{R}_{+}$ satisfying $x < y$. Note that the diffusion-scaled $u$, that is, $n^{-1/2} u$, is limited to $u \in I$ in $f^{(n)}_{\theta,I}(u,\vc{v})$. These classes of test functions without the limitations by $I$ are typically used in the BAR approach (e.g., see \cite{BravDaiMiya2017,Miya2017}). We will see that this limitations is quite useful for the state-dependent queue.

Our first task is to compute the BAR \eq{BAR1} for the test function $f = f^{(n)}_{n^{-1/2}\theta,I}$ with $I=[0,x]$ and $I=(x,y]$ for $0 \le x < y$. Namely,
\begin{align}
 \label{eq:SDQ-BAR1}
 \dd{E}\left[ \sr{H}^{(n)}f^{(n)}_{n^{-1/2}\theta,[0,x]}(X^{(n)}(0)) \right] + \alpha^{(n)}_{e} E^{(n)}_{\Delta,[0,x]}(\theta) = 0, \qquad \theta \in \dd{R}.
\end{align}
This pre-limit BAR is the starting point of our analysis. We derive its asymptotic form. For this, we use the following simplified notations. For $n \ge 1$ and a nonnegative random variable $X$, let $\widecheck{X} = X \wedge n^{1/2}$. For example,
\begin{align*}
 & \widecheck{T}^{(n)}_{A} = T_{A} \wedge n^{1/2}, \qquad \widecheck{T}^{(n)}_{S} = T_{S} \wedge n^{1/2}, \\
 & \widecheck{R}^{(n)}_{e}(0) = R^{(n)}_{e}(0) \wedge n^{1/2}, \qquad \widecheck{R}^{(n)}_{d}(0-) = R^{(n)}_{d}(0-) \wedge n^{1/2}.
\end{align*}

Then, we have an asymptotic BAR in the following lemma, which will be used in the proofs of Theorems \thrt{SDQ-main-1} and \thrt{MLQ-main-1}. Recall the definition \eq{qn-x} of $q^{(n)}_{x}$.
 
\begin{lemma}[Asymptotic BAR]
 \label{lem:asym-BAR1}
 The following asymptotic formula is obtained uniformly in $x$ in each finite interval on the nonnegative half-line. For each $\theta \le 0$ and $x \ge 0$, as $n \to \infty$,
 \begin{align}
  \label{eq:asym-BAR=<1}
  & \frac 12 \dd{E}\left[ (\widehat{\sigma}^{(n)})^{2}(\widehat{L}^{(n)}) \left(\widehat{\beta}^{(n)}(\widehat{L}^{(n)}) + {\theta} \right) e^{\theta \widehat{L}^{(n)}} 1(\widehat{L}^{(n)} \le x) \right] + \mu^{(n)}(0) n^{1/2} \dd{P}[\widehat{L}^{(n)} = 0] \nonumber\\
  & \quad = - e^{n^{-1/2} q^{(n)}_{x} {\theta}} \{\dd{E}[\widehat{b}^{(n)}(\widehat{L}^{(n)}) 1(\widehat{L}^{(n)} > x)] + \theta H^{(n)}_{x}\}+ \theta o(1),\\
  \label{eq:asym-BAR>1}
  & \frac 12 \dd{E}\left[ (\widehat{\sigma}^{(n)})^{2}(\widehat{L}^{(n)}) \left(\widehat{\beta}^{(n)}(\widehat{L}^{(n)}) + {\theta} \right) e^{\theta \widehat{L}^{(n)}} 1(\widehat{L}^{(n)} > x) \right] \nonumber\\
  & \quad = e^{n^{-1/2} q^{(n)}_{x} {\theta}} \{\dd{E}[\widehat{b}^{(n)}(\widehat{L}^{(n)}) 1(\widehat{L}^{(n)} > x)] + \theta H^{(n)}_{x}\}+ \theta o(1),
 \end{align}
 where
 \begin{align}
  \label{eq:Hn-x}
  H^{(n)}_{x} = 2^{-1} \alpha^{(n)}_{e} \big\{&\dd{E}^{(n)}_{e}[(\sigma_{S}^{2} \widecheck{R}^{(n)}_{d}(0-) - (\widecheck{R}^{(n)}_{d}(0-))^{2}) 1(L^{(n)}(0-) = q^{(n)}_{x})] \nonumber\\
  & -\dd{E}^{(n)}_{d}[((\sigma_{A}^{2} + 2) \widecheck{R}^{(n)}_{e}(0) - (\widecheck{R}^{(n)}_{e}(0))^{2}) 1(L^{(n)}(0) = q^{(n)}_{x})].
 \end{align}
\end{lemma}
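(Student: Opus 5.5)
The plan is to plug the test function $f = f^{(n)}_{n^{-1/2}\theta,I}$ into the pre-limit BAR \eq{BAR1}, using $\alpha^{(n)}_{d}=\alpha^{(n)}_{e}$ from \lem{SDQ-basic 1}. We treat $I=[0,x]$ and $I=(x,\infty)$; the identity for $(x,\infty)$ comes the same way, or by subtracting the $[0,x]$ identity from the one for $I=\dd{R}_{+}$. The work splits into a continuous part and a jump part.

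\textbf{Continuous part.} Since $\frac{\partial^{+}}{\partial v_{i}}$ acting on $g^{(n)}$ only brings down $-\eta^{(n)}$ or $-\zeta^{(n)}$ (on $v_{i}<n^{1/2}$), we get
\[
\sr{H}^{(n)} f = \bigl(\lambda^{(n)}(L)\eta^{(n)} 1(R_{e}<n^{1/2}) + \mu^{(n)}(L)1(L>0)\zeta^{(n)}1(R_{d}<n^{1/2})\bigr) f .
\]
Expanding with \lem{SDQ-eta-zeta1} gives
\[
n^{-1/2}\theta(\lambda^{(n)}-\mu^{(n)}1(L>0)) + \tfrac12 n^{-1}\theta^{2}(\lambda^{(n)}\sigma_{A}^{2}+\mu^{(n)}\sigma_{S}^{2}) + o(n^{-1}\theta^{2}).
\]
Multiplying the whole BAR by $n$ and writing $n^{1/2}(\lambda^{(n)}-\mu^{(n)})=\widehat b^{(n)}$, this becomes
\[
\tfrac12 \theta(\widehat\sigma^{(n)})^{2}(\widehat\beta^{(n)}+\theta) e^{\theta\widehat L^{(n)}} g^{(n)}(R^{(n)}) 1(\widehat L^{(n)}\in I),
\]
plus a boundary term $\theta\mu^{(n)}(0)n^{1/2}1(L=0)$ from the missing service at $0$. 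Then \lem{SDQ-gn-limit} and the bound \eq{SDQ-eta-zeta1} (with $\theta\le0$ and bounded speeds, so dominated convergence applies) replace $g^{(n)}$ by $1$ at cost $\theta o(1)$. The indicator truncations are handled by $\dd{P}[R\ge n^{1/2}]\le n^{-1}\dd{E}[R^{2}]$, using the third moments in (\sect{preliminaries}.c).

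\textbf{Jump part.} This is where the choice of $\eta^{(n)},\zeta^{(n)}$ in \eq{SDQ-boundary1} matters. At an arrival with $L(0-)+1$ and $L(0-)$ both in $n^{1/2}I$, the factor $e^{n^{-1/2}\theta}\dd{E}[e^{-\eta \widecheck T_{A}}]=1$ makes the Palm expectation vanish, because $T_{A}(N_{e}(0))$ is independent of the past under $\dd{P}^{(n)}_{e}$. The same holds for departures. So $E^{(n)}_{\Delta,I}$ reduces to boundary terms at the edge $L=q^{(n)}_{x}$: arrivals from $q^{(n)}_{x}$ leaving $[0,x]$, and departures from $q^{(n)}_{x}+1$ entering it. In these terms the exponential factor $e^{n^{-1/2}q^{(n)}_{x}\theta}$ appears, multiplied by $g$ evaluated at $\widecheck R_{d}(0-)$ or $\widecheck R_{e}(0)$.

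To show that, after scaling by $n$, these boundary terms give $-e^{n^{-1/2}q_x\theta}\{\dd{E}[\widehat b^{(n)}1(\widehat L>x)]+\theta H^{(n)}_{x}\}$, the plan is:
\begin{itemize}
\item[(a)] expand $g$ to second order in $n^{-1/2}\theta$;
\item[(b)] use the balance \eq{SDQ-balance 1} to pair the arrival and departure boundary probabilities;
\item[(c)] identify the zeroth-order leftover $n^{1/2}\alpha^{(n)}_{e}\,\dd{P}^{(n)}_{e}[L(0-)=q_x]$ with $\dd{E}[\widehat b^{(n)}1(\widehat L>x)]$.
\end{itemize}
Step (c) uses the BAR applied to $f=(L\wedge(q+1))$-type and $x_{2},x_{3}$-type test functions restricted to $L>q_x$, a localized version of \lem{SDQ-basic 1}. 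The first-order terms are then matched by applying the BAR to $v_{1}1(L>q)$ and $v_{2}1(L>q)$, together with the Palm identities for second moments $\dd{E}[\widecheck T^{2}]\approx\sigma^{2}+1$. The surviving combination is exactly $H^{(n)}_{x}$; in particular the coefficient $(\sigma_{A}^{2}+2)$ comes from $\dd{E}[\widecheck T_{A}^{2}]+1$.

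Dividing by $\theta$ then gives \eq{asym-BAR=<1} and \eq{asym-BAR>1}. Uniformity in $x$ over compacts follows because every error bound depends only on moments and $\sup$ speeds, not on $x$.

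The main obstacle is the jump-part bookkeeping. The boundary Palm terms have to be expanded to second order with errors that are $o(1)$ after multiplying by $n$. This requires $n^{1/2}\dd{P}^{(n)}_{e}[L(0-)=q_x]$ to be bounded uniformly, which I would obtain from step (c) itself, since $|\widehat b^{(n)}|$ is uniformly bounded by \lem{h-limit}. It also requires the second-moment residual terms to stay bounded, which needs the third-moment assumption.
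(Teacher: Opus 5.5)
\textbf{Where the proposal agrees with the paper, and where it breaks.} Your continuous part and your reduction of the jump part to the boundary state $q^{(n)}_{x}$ are the same as in the paper (\lem{BAR=<P1} and \eq{SDQ=<En1}). The gap is step (c): it is false, and the rest of your jump-part bookkeeping rests on it.

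Put $p_{n}=\dd{P}^{(n)}_{e}[L^{(n)}(0-)=q^{(n)}_{x}]$. Your leftover $n^{1/2}\alpha^{(n)}_{e}p_{n}$ is nonnegative, while $\dd{E}[\widehat{b}^{(n)}(\widehat{L}^{(n)})1(\widehat{L}^{(n)}>x)]$ is negative under a negative drift. For the heavy-traffic $M/M/1$ queue with $\lambda^{(n)}\equiv 1-cn^{-1/2}$ and $\mu^{(n)}\equiv 1$, PASTA gives $n^{1/2}\alpha^{(n)}_{e}p_{n}\to ce^{-cx}$, whereas $\dd{E}[\widehat{b}^{(n)}(\widehat{L}^{(n)})1(\widehat{L}^{(n)}>x)]\to -ce^{-cx}$.

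The reason is that the first-order terms of your expansion,
$-n^{-1/2}\theta\,\alpha^{(n)}_{e}\bigl(\dd{E}^{(n)}_{e}[\widecheck{R}^{(n)}_{d}(0-)1(L^{(n)}(0-)=q^{(n)}_{x})]+\dd{E}^{(n)}_{d}[\widecheck{R}^{(n)}_{e}(0)1(L^{(n)}(0)=q^{(n)}_{x})]\bigr)$,
are of the same order as the leftover. In the $M/M/1$ example, memorylessness makes them asymptotically $-2n^{-1/2}\theta\alpha^{(n)}_{e}p_{n}$. They must be grouped with the leftover into $-n^{-1/2}\theta\Delta^{(n)}_{x}$, with $\Delta^{(n)}_{x}$ as in \eq{SDQ-Dn-i}.

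The localized BARs for $(v_{1}\wedge n^{1/2})1(u>q^{(n)}_{x})$ and $(v_{2}\wedge n^{1/2})1(u>q^{(n)}_{x})$ are exactly the test functions you name, but they yield only one relation. Via \lem{SDQ-Red} with $j=1$, \eq{SDQ-balance 1} and \lem{SDQ-t-moment}, that relation is \eq{SDQ-Dn-j-}: $\Delta^{(n)}_{x}=-n^{-1/2}\dd{E}[\widehat{b}^{(n)}(\widehat{L}^{(n)})1(\widehat{L}^{(n)}>x)]+o(n^{-1})$. There is no separate identity for the leftover alone. Using these test functions once for (c) and again to "match the first-order terms" therefore double counts. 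For the same reason, your derivation of a uniform bound on $n^{1/2}p_{n}$ from (c) does not go through.

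\textbf{Where $H^{(n)}_{x}$ actually comes from.} Once the first-order terms are absorbed into $\Delta^{(n)}_{x}$, no first-order residue remains to be turned into $H^{(n)}_{x}$. No second-moment identity such as $\dd{E}[(\widecheck{T}^{(n)}_{A})^{2}]\approx 1+\sigma_{A}^{2}$ enters. $H^{(n)}_{x}$ is simply the $n^{-1}\theta^{2}$ coefficient of the second-order expansion in \lem{SDQ-asymp 2}:
\begin{itemize}
\item $\sigma_{A}^{2}$ and $\sigma_{S}^{2}$ come from \eq{SDQ-eta1} and \eq{SDQ-zeta1};
\item the extra $2$ in $\sigma_{A}^{2}+2$ is the cross term of $e^{n^{-1/2}\theta}-1$ with $-n^{-1/2}\theta\widecheck{R}^{(n)}_{e}(0)$, which arises because the departure term carries $e^{n^{-1/2}\theta(q^{(n)}_{x}+1)}$.
\end{itemize}
What controls the remainders is that the boundary Palm moments of $\widecheck{R}^{(n)}_{d}(0-)$ and $\widecheck{R}^{(n)}_{e}(0)$ of order $j\le 3$ are bounded uniformly in $n$. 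This is \cor{SDQ-Re-Rs 1}, obtained from \lem{SDQ-Red}, and it is where the third moments are used.

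\textbf{On your concern about $p_{n}$.} It is not unfounded. The $\tfrac12 n^{-1}\theta^{2}\alpha^{(n)}_{e}p_{n}$ part of $(e^{n^{-1/2}\theta}-1)\alpha^{(n)}_{e}p_{n}$ contributes $\tfrac12\theta\alpha^{(n)}_{e}p_{n}$ after division by $n^{-1}\theta$. That is $\theta o(1)$ only if $\alpha^{(n)}_{e}p_{n}\to 0$. The paper absorbs it into the factor in $e^{n^{-1/2}\theta}-1=n^{-1/2}\theta(1+o(1))$ without further argument. Any justification for this would have to come from somewhere other than (c).
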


This lemma is immediate from Lemmas \lemt{BAR=<P1} and \lemt{BAR=<P2} in Sections \sect{continuous} and \sect{discontinuous}, respectively, which correspond to the continuous and discontinuous parts of the pre-limit BAR \eq{SDQ-BAR1}, respectively. To prove these  lemmas, we prepare two auxiliary lemmas on asymptotic properties of $R^{(n)}_{z}$ and $\widecheck{R}^{(n)}_{z}$ for $z=e,d$ and $\widecheck{T}^{(n)}_{z}$ for $z=A,S$. 

\begin{lemma}
 \label{lem:SDQ-moment}
 For each fixed $n \ge 1$, 
 \begin{align}
  \label{eq:SDQ-Re 1}
  & \dd{P}[R^{(n)}_{e} > y] \le (\lambda^{(n)}_{\inf})^{-1} \alpha^{(n)}_{e} \dd{E}[(T_{A}-y) 1(T_{A} > y)] = o(y^{-2}),\\
  \label{eq:SDQ-Rd 1}
  & \dd{P}[R^{(n)}_{d} > y] \le (\mu^{(n)}_{\inf})^{-1} \left( \mu^{(n)}(0) \dd{P}[T_{S} > y] + \alpha^{(n)}_{e} \dd{E}[(T_{S}-y) 1(T_{S} > y)]\right) = o(y^{-2}),
 \end{align}
 and therefore, for $j =1,2$ and $n \ge 1$,
 \begin{align}
  \label{eq:SDQ-Re 2}
  & (j+1) \dd{E}[(R^{(n)}_{e})^{j}] \le (\lambda^{(n)}_{\inf})^{-1} \alpha^{(n)}_{e} \dd{E}[(T_{A})^{j+1}], \\
  \label{eq:SDQ-Rd 2}
  & (j+1) \dd{E}[(R^{(n)}_{d})^{j}] \le (j+1) \dd{E}[T_{S}^{j}] + (\mu^{(n)}_{\inf})^{-1} \alpha^{(n)}_{e} \dd{E}[T_{S}^{j+1}].
 \end{align}
 In particular, $\{(R^{(n)}_{e})^{j}; n \ge 1\}$ and $\{(R^{(n)}_{d})^{j} 1(L^{(n)}(0) \ge 1); n \ge 1\}$ are uniformly integrable for $j=1,2$ by \eq{SDQ-Re 2} and \eq{SDQ-Rd 2}.
\end{lemma}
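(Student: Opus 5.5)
We will derive both tail bounds from the pre-limit BAR \eq{BAR1}, taking test functions that depend only on one nominal residual time; the moment bounds then follow by integrating the tails. For \eq{SDQ-Re 1}, fix $y \ge 0$ and take $f(u,\vc{v}) = (v_{1} - y)^{+} \wedge M$ for a truncation level $M > y$. This $f$ is bounded, continuous in $\vc{v}$, and has bounded right derivative $\frac {\partial^{+}}{\partial v_{1}} f = 1(y \le v_{1} < M)$, so $f \in D_{b}^{p}(\dd{R}_{+}^{3})$. Only the arrival jumps change $v_{1}$: at an arrival under $\dd{P}^{(n)}_{e}$ the nominal residual time goes from $R^{(n)}_{e}(0-) = 0$ to $T_{A}(N^{(n)}_{e}(0))$, which is distributed as $T_{A}$. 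Hence $\dd{E}^{(n)}_{e}[\Delta_{e} f] = \dd{E}[(T_{A}-y)^{+} \wedge M]$, while $\Delta_{d} f = 0$. Then \eq{BAR1} gives
\begin{align*}
 \dd{E}\left[\lambda^{(n)}(L^{(n)}) 1(y \le R^{(n)}_{e} < M)\right] = \alpha^{(n)}_{e} \dd{E}[(T_{A}-y)^{+} \wedge M].
\end{align*}
Bounding $\lambda^{(n)}$ below by $\lambda^{(n)}_{\inf}$ and letting $M \to \infty$ by monotone convergence yields the inequality in \eq{SDQ-Re 1} (with $\ge y$ on the left, which only strengthens it). The $o(y^{-2})$ claim follows from $\dd{E}[(T_{A}-y)^{+}] \le \dd{E}[T_{A}1(T_{A}>y)] \le y^{-2}\dd{E}[T_{A}^{3}1(T_{A}>y)]$, and the last expectation tends to $0$ by \eq{moment-c}.

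The service side, \eq{SDQ-Rd 1}, is the delicate step, because the derivative term carries $\mu^{(n)}(L^{(n)})1(L^{(n)}>0)$ and the drift vanishes on the event $\{L^{(n)} = 0\}$. With the same kind of test function $f = (v_{2}-y)^{+}\wedge M$, the BAR gives
\begin{align*}
 \dd{E}\left[\mu^{(n)}(L^{(n)})1(L^{(n)}>0)1(y \le R^{(n)}_{d} < M)\right] = \alpha^{(n)}_{e} \dd{E}[(T_{S}-y)^{+}\wedge M],
\end{align*}
using $\alpha^{(n)}_{d} = \alpha^{(n)}_{e}$ from \lem{SDQ-basic 1}. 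The remaining piece is $\dd{P}[R^{(n)}_{d} > y, L^{(n)} = 0]$. I expect this to be handled as follows. During an idle period $R^{(n)}_{d}$ is frozen at a fresh value $T_{S}(j)$ drawn at the last departure, and that value is independent of the length of the idle period. So this probability equals $\dd{P}[L^{(n)}=0]\,\dd{P}[T_{S}>y]$.

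To make that rigorous inside the BAR framework, I would apply \eq{BAR1} to $f = 1(u=0)\,\phi(v_{2})$ with $\phi$ a smooth approximation of $1(v_{2}>y)$. Combining this with \eq{SDQ-alpha 2} should show that the idle mass is at most $(\mu^{(n)}_{\inf})^{-1}\mu^{(n)}(0)\dd{P}[T_{S}>y]$, using $\dd{P}[L^{(n)}=0]\mu^{(n)}(0) = \mu^{(n)}(0)\dd{P}[L^{(n)}=0] \le \mu^{(n)}(0)$ after dividing by $\mu^{(n)}_{\inf}$. The factor $\mu^{(n)}(0)/\mu^{(n)}_{\inf} \ge 1$ is exactly the slack in the stated form. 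Adding the two pieces gives \eq{SDQ-Rd 1}, and the $o(y^{-2})$ statement follows as before.

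Finally, \eq{SDQ-Re 2} comes from $\dd{E}[(R^{(n)}_{e})^{j}] = \int_{0}^{\infty} j y^{j-1}\dd{P}[R^{(n)}_{e}>y]dy$ together with $\int_{0}^{\infty} j y^{j-1}\dd{E}[(T_{A}-y)^{+}]dy = \dd{E}[T_{A}^{j+1}]/(j+1)$. For \eq{SDQ-Rd 2}, integrating the idle term gives $\dd{E}[T_{S}^{j}]$ times a constant; I will need to check whether this constant reduces to the stated coefficient $(j+1)\dd{E}[T_{S}^{j}]$ by bounding the idle part by $\dd{E}[T_{S}^{j}]$ directly, rather than through the $\mu$-ratio. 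Uniform integrability in $n$ then follows because $\alpha^{(n)}_{e}$, $(\lambda^{(n)}_{\inf})^{-1}$ and $(\mu^{(n)}_{\inf})^{-1}$ are bounded uniformly in $n$ by \lem{alpha 1} and \lem{h-limit}. The tail bounds are uniform in $n$ with $o(y^{-2})$ decay, so the second powers are uniformly integrable. The busy part $(R^{(n)}_{d})^{j}1(L^{(n)}\ge1)$ is controlled by the BAR identity alone.
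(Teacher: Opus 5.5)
Your treatment of $R^{(n)}_{e}$ and of the busy part of $R^{(n)}_{d}$ follows the paper's route. The paper applies \eq{BAR1} to $f=(v_{1}-w)1(v_{1}\ge w)$, and then says ``similarly'' for $v_{2}$. Truncating at $M$ is a legitimate way to stay in $D_{b}^{p}(\dd{R}_{+}^{3})$. One slip: the right derivative is $1(y\le v_{1}<y+M)$, not $1(y\le v_{1}<M)$, which is harmless. Your uniform-integrability argument is also the right one: tails that are $o(y^{-2})$ uniformly in $n$ do the work, whereas uniformly bounded second moments alone would not give uniform integrability of the squares.

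The genuine gap is the idle term $\dd{P}[R^{(n)}_{d}>y,\,L^{(n)}=0]$, where the step you propose cannot work. For $f=1(u=0)\phi(v_{2})$ you get $\sr{H}^{(n)}f\equiv 0$, because $f$ does not depend on $v_{1}$ and the $v_{2}$-derivative is multiplied by $\mu^{(n)}(u)1(u>0)$. So \eq{BAR1} yields only the Palm identity
\begin{align*}
 \dd{E}^{(n)}_{e}[\phi(R^{(n)}_{d}(0-))1(L^{(n)}(0-)=0)] = \dd{P}^{(n)}_{d}[L^{(n)}(0)=0]\,\dd{E}[\phi(T_{S})],
\end{align*}
which contains no stationary term at all. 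Also, \eq{SDQ-alpha 2} involves only the busy states $\ell\ge 1$, so nothing about the stationary law of $R^{(n)}_{d}$ on $\{L^{(n)}=0\}$ comes out.

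To make your (correct) frozen-residual heuristic rigorous, you must weight by the length of the idle period, i.e.\ by $v_{1}$. Take $f(u,\vc{v})=(v_{1}\wedge M)1(u=0)\phi(v_{2})$. Then $\sr{H}^{(n)}f=-\lambda^{(n)}(0)1(u=0)1(v_{1}<M)\phi(v_{2})$. Next, $\Delta_{e}f=0$, since at an arrival epoch $R^{(n)}_{e}(0-)=0$ and afterwards $L^{(n)}_{1}(0)\ge 1$. Finally, $\Delta_{d}f=(R^{(n)}_{e}(0)\wedge M)\phi(R^{(n)}_{d}(0))1(L^{(n)}(0)=0)$. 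Under $\dd{P}^{(n)}_{d}$ the freshly drawn $R^{(n)}_{d}(0)$ is independent of $(L^{(n)}(0),R^{(n)}_{e}(0))$, so \eq{BAR1} gives
\begin{align*}
 & \lambda^{(n)}(0)\,\dd{E}[\phi(R^{(n)}_{d})1(L^{(n)}=0,\,R^{(n)}_{e}<M)] \\
 & \quad = \alpha^{(n)}_{e}\,\dd{E}^{(n)}_{d}[(R^{(n)}_{e}(0)\wedge M)1(L^{(n)}(0)=0)]\,\dd{E}[\phi(T_{S})].
\end{align*}
Normalizing with $\phi\equiv 1$ and letting $M\to\infty$ yields $\dd{E}[\phi(R^{(n)}_{d})1(L^{(n)}=0)]=\dd{P}[L^{(n)}=0]\,\dd{E}[\phi(T_{S})]$.

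With this identity, \eq{SDQ-Rd 1} follows as you describe, using the slack $\mu^{(n)}(0)/\mu^{(n)}_{\inf}\ge 1$. It also settles the point you left open in \eq{SDQ-Rd 2}. Write
\begin{align*}
 \dd{E}[(R^{(n)}_{d})^{j}]=\dd{P}[L^{(n)}=0]\,\dd{E}[T_{S}^{j}]+\dd{E}[(R^{(n)}_{d})^{j}1(L^{(n)}\ge 1)],
\end{align*}
and integrate only the busy tail bound. Integrating \eq{SDQ-Rd 1} itself would leave an extra factor $\mu^{(n)}(0)/\mu^{(n)}_{\inf}$ on the first term, which is weaker than stated. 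The paper's text only says ``similarly'' for \eq{SDQ-Rd 1}, but the form of \eq{SDQ-Rd 1} and the coefficient $(j+1)\dd{E}[T_{S}^{j}]$ in \eq{SDQ-Rd 2} are consistent with exactly this idle-period identity.
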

This lemma easily follows from \lem{alpha 1} and the BAR \eq{BAR1} by appropriately chosen test functions. For example, we choose $f(u,\vc{y}) = (y_{1} - w) 1(y_{1} \ge w)$, which satisfies {the condition} \eq{test-f}. Applying this $f$ to \eq{BAR1}, we have
\begin{align*}
 \dd{E}[\lambda^{(n)}(L^{(n)}) 1(R^{(n)}_{e} > w)] = \alpha^{(n)}_{e} \dd{E}[(T_{A}-w) 1(T_{A} > w)],
\end{align*}
which yields \eq{SDQ-Re 1}. Similarly, \eq{SDQ-Rd 1} is proved. 

The following lemma is essentially the same as Lemma 4.4 of \cite{Miya2025}.

\begin{lemma}
 \label{lem:SDQ-t-moment}
 For $j=0,1,2$, as $n \to \infty$,
 \begin{align}
  \label{eq:SDQ-Tu 1}
  & |\dd{E}[(\widecheck{T}^{(n)}_{z})^{j+1}] - \dd{E}[T_{z}^{j+1}]| \le \dd{E}[T_{z}^{j+1} 1(T_{z} > n^{1/2})] = o(n^{-(2-j)/2}), \quad z = A,S,\\
  \label{eq:SDQ-Rv 1}
  & |\dd{E}[(\widecheck{R}^{(n)}_{z})^{j}] - \dd{E}[(R^{(n)}_{z})^{j}]| \le \dd{E}[(R^{(n)}_{z})^{j} 1(R^{(n)}_{z} > n^{1/2})] = o(n^{-(2-j)/2}), \quad z = e, d.
 \end{align}
\end{lemma}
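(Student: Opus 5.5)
The plan is to prove the two families of bounds separately, using only elementary truncation estimates. Each bound is first written as a tail integral. The little-$o$ rates then come from the finite third moments in (\sect{preliminaries}.c) for the $T_{z}$. For the nominal residual times $R^{(n)}_{z}$, they come from the tail bounds \eq{SDQ-Re 1}--\eq{SDQ-Rd 1} of \lem{SDQ-moment}, combined with the uniform boundedness of $\alpha^{(n)}_{e}$ from \lem{alpha 1}.

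For \eq{SDQ-Tu 1}, note that $T_{z} - \widecheck{T}^{(n)}_{z}$ vanishes on $\{T_{z} \le n^{1/2}\}$. On its complement, $0 \le T_{z}^{j+1} - (\widecheck{T}^{(n)}_{z})^{j+1} \le T_{z}^{j+1}$. This gives the first inequality directly. For the rate, on $\{T_{z} > n^{1/2}\}$ I will use $T_{z}^{j+1} \le T_{z}^{3} n^{-(2-j)/2}$, which holds because $T_{z}^{2-j} \ge n^{(2-j)/2}$. Hence
\begin{align*}
 \dd{E}[T_{z}^{j+1} 1(T_{z} > n^{1/2})] \le n^{-(2-j)/2}\, \dd{E}[T_{z}^{3} 1(T_{z} > n^{1/2})] = o(n^{-(2-j)/2}),
\end{align*}
where the last step is dominated convergence, since $\dd{E}[T_{z}^{3}] < \infty$.

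For \eq{SDQ-Rv 1}, the first inequality follows in the same way. The case $j=0$ is trivial: both sides vanish, or the claim reduces to $\dd{P}[R^{(n)}_{z} > n^{1/2}] = o(n^{-1})$, which comes from the tail bound. The rate is the real issue, because $R^{(n)}_{z}$ depends on $n$ and dominated convergence is unavailable. The main obstacle is therefore to obtain tail bounds that are uniform in $n$.

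I would work with the layer-cake identity
\begin{align*}
 \dd{E}[(R^{(n)}_{z})^{j} 1(R^{(n)}_{z} > c)] = c^{j}\, \dd{P}[R^{(n)}_{z} > c] + \int_{c}^{\infty} j y^{j-1} \dd{P}[R^{(n)}_{z} > y]\, dy
\end{align*}
with $c = n^{1/2}$. Next I plug in \eq{SDQ-Re 1}. For this I need $\sup_{n}(\lambda^{(n)}_{\inf})^{-1} < \infty$ and $\sup_{n} \alpha^{(n)}_{e} < \infty$. The second is \lem{alpha 1}. The first follows from \eq{limit-la-mu-n} together with (f1), which bounds $\lambda(\cdot)$ away from $0$; finitely many small $n$ are handled by (\sect{preliminaries}.a). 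The same reasoning gives uniform bounds on $\mu^{(n)}_{\inf}$ and $\mu^{(n)}(0)$.

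This reduces everything to showing that $\psi_{z}(y) \equiv \dd{E}[(T_{z}-y)1(T_{z}>y)] + \dd{P}[T_{z}>y]$ satisfies two estimates:
\begin{align*}
 c^{j}\psi_{z}(c) = o(c^{-(2-j)}), \qquad \int_{c}^{\infty} y^{j-1}\psi_{z}(y)\,dy = o(c^{-(2-j)}).
\end{align*}
Both follow from Fubini. For example,
\begin{align*}
 \int_{c}^{\infty} y^{j-1}\dd{E}[(T_{z}-y)^{+}]\,dy \le \dd{E}[T_{z}^{j+1} 1(T_{z}>c)],
\end{align*}
which is $o(c^{-(2-j)})$ by the argument used above for \eq{SDQ-Tu 1}. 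Likewise, Markov's inequality gives $y^{2}\psi_{z}(y) \le 2\,\dd{E}[T_{z}^{3}1(T_{z}>y)] \to 0$.

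For $R^{(n)}_{d}$, the extra term $\mu^{(n)}(0)\dd{P}[T_{S}>y]$ in \eq{SDQ-Rd 1} is handled the same way. It integrates to at most $\dd{E}[T_{S}^{j}1(T_{S}>c)]$ up to constants, which is $o(c^{-(3-j)})$.

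Finally, substituting $c = n^{1/2}$ turns $o(c^{-(2-j)})$ into $o(n^{-(2-j)/2})$, which completes the proof. The main point is the uniformity in $n$; the key step is transferring the $T_{z}$ tail bounds through \lem{SDQ-moment} with $n$-independent constants.
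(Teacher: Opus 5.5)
Your proof is correct and follows the route the paper intends; the paper gives no argument of its own and simply defers to Lemma 4.4 of \cite{Miya2025}. The ingredients are the truncation inequality $0 \le X^{j} - (X \wedge n^{1/2})^{j} \le X^{j} 1(X > n^{1/2})$, finite third moments of $T_{z}$, and the tail bounds of \lem{SDQ-moment}, with constants made uniform in $n$ through \lem{alpha 1} and \eq{limit-la-mu-n}. Your explicit layer-cake and Fubini estimates also supply what underlies the uniform integrability of $\{(R^{(n)}_{z})^{2}\}$ asserted at the end of \lem{SDQ-moment}, which bounded second moments alone would not give.
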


\subsection{Continuous part of the pre-limit BAR}
\label{sec:continuous}

In this section, we compute the continuous part $\dd{E}\left[ \sr{H}^{(n)}f^{(n)}_{n^{-1/2}\theta,[0,x]}(X^{(n)}(0))\right]$ of the pre-limit BAR \eq{SDQ-BAR1}, and derive its asymptotic form. Let
\begin{align}
\label{eq:r-ed}
  r^{(n)}_{e} = 1(R^{(n)}_{e} > n^{1/2}), \qquad r^{(n)}_{d} = 1(R^{(n)}_{d} > n^{1/2}), 
\end{align}
then
\begin{align}
 \label{eq:SDQ-H1}
 & \dd{E}\left[ \sr{H}^{(n)}f^{(n)}_{n^{-1/2}\theta,[0,x]}(X^{(n)})\right] \nonumber\\
 & \quad = \dd{E} \Big[\Big(\lambda^{(n)}(L^{(n)}) (1-r^{(n)}_{e}) \eta^{(n)}(n^{-1/2}\theta) \nonumber\\
 & \hspace{9ex} + \mu^{(n)}(L^{(n)}) (1-r^{(n)}_{d}) \zeta^{(n)} (n^{-1/2}\theta)\Big) e^{n^{-1/2} \theta  L^{(n)}} 1(n^{-1/2} L^{(n)} \le x) g^{(n)}_{n^{-1/2}\theta}(R^{(n)}) \Big] \nonumber\\
 & \qquad - \mu^{(n)}(0) \zeta^{(n)}(n^{-1/2}\theta) \dd{E}\left[(1-r^{(n)}_{d}) 1(L^{(n)}=0) g^{(n)}_{n^{-1/2} \theta}(R^{(n)}) \right].
\end{align}
Since $\widehat{L}^{(n)} = n^{-1/2} L^{(n)}$, $\widehat{\lambda}^{(n)}(\widehat{L}^{(n)}) = \lambda^{(n)}(L^{(n)})$ and $\widehat{\mu}^{(n)}(\widehat{L}^{(n)}) = \mu^{(n)}(L^{(n)})$, \eq{SDQ-H1} can be written as
\begin{align}
 \label{eq:SDQ-H2}
 & \dd{E}\left[ \sr{H}^{(n)}f^{(n)}_{n^{-1/2}\theta,[0,x]}(X^{(n)})\right] \nonumber\\
 & \quad = \dd{E} \Big[\Big(\widehat{\lambda}^{(n)}(\widehat{L}^{(n)}) (1-r^{(n)}_{e}) \eta^{(n)}(n^{-1/2}\theta) \nonumber\\
 & \hspace{9ex} + \widehat{\mu}^{(n)}(\widehat{L}^{(n)}) (1-r^{(n)}_{d}) \zeta^{(n)} (n^{-1/2}\theta)\Big) e^{\theta \widehat{L}^{(n)}} 1(\widehat{L}^{(n)} \le x) g^{(n)}_{n^{-1/2}\theta}(R^{(n)}) \Big] \nonumber\\
 & \qquad - \mu^{(n)}(0) \zeta^{(n)}(n^{-1/2}\theta) \dd{E}\left[(1-r^{(n)}_{d}) 1(\widehat{L}^{(n)}=0) g^{(n)}_{n^{-1/2} \theta}(R^{(n)}) \right], \qquad x \ge 0,
\end{align}
where we have used the fact that $\{L^{(n)} \le q^{(n)}_{x}\} = \{\widehat{L}^{(n)} \le x\}$. Let 
\begin{align}
\label{eq:En-ed}
  \sr{D}^{(n)}_{ed}(\theta) & = \widehat{\lambda}^{(n)}(\widehat{L}^{(n)}) r^{(n)}_{e} \eta^{(n)}(n^{-1/2}\theta) + \widehat{\mu}^{(n)}(\widehat{L}^{(n)}) r^{(n)}_{d} \zeta^{(n)} (n^{-1/2}\theta).
\end{align}
Since {$\widehat{\lambda}^{(n)}(u) - \widehat{\mu}^{(n)}(u) = n^{-1/2} \widehat{b}^{(n)}(u)$}, \lem{SDQ-eta-zeta1} yields, for each $\theta \le 0$,
\begin{align*}
 & \Big(\widehat{\lambda}^{(n)}(\widehat{L}^{(n)}) (1-r^{(n)}_{e}) \eta^{(n)}(n^{-1/2}\theta) + \widehat{\mu}^{(n)}(\widehat{L}^{(n)}) (1-r^{(n)}_{d}) \zeta^{(n)} (n^{-1/2}\theta)\Big) \nonumber\\
 & \quad = \Big(n^{-1/2} \widehat{b}^{(n)}(\widehat{L}^{(n)}) n^{-1/2} \theta - \sr{D}^{(n)}_{ed}(\theta) \nonumber\\
 & \qquad + \frac 12 \left(\widehat{\lambda}^{(n)}(\widehat{L}^{(n)}) \sigma_{A}^{2} + \widehat{\mu}^{(n)}(\widehat{L}^{(n)}) \sigma_{S}^{2}\right) \theta^{2} n^{-1} + o(\theta^{2} n^{-1}) \Big) \nonumber\\
 & \quad = \Big(\frac 12 (\widehat{\sigma}^{(n)})^{2}(\widehat{L}^{(n)}) \left(\widehat{\beta}^{(n)}(\widehat{L}^{(n)}) + \theta\right) + \theta o(1) \Big) \theta n^{-1} - \sr{D}^{(n)}_{ed}(\theta), \quad n \to \infty.
\end{align*}
Hence, from \eq{SDQ-H2}, we have
\begin{align}
 \label{eq:SDQ=<H3}
 & \dd{E}\left[\sr{H}^{(n)}f^{(n)}_{n^{-1/2}\theta,[0,x]}(X^{(n)})\right] + \dd{E}\left[\sr{D}^{(n)}_{ed}(\theta) e^{\theta \widehat{L}^{(n)}} 1(\widehat{L}^{(n)} \le x) g^{(n)}_{n^{-1/2}\theta}(R^{(n)})\right] \nonumber\\
 & \quad = \frac 12 \dd{E}\left[ \left((\widehat{\sigma}^{(n)})^{2}(\widehat{L}^{(n)}) \left(\widehat{\beta}^{(n)}(\widehat{L}^{(n)}) + {\theta} \right) + \theta o(1) \right) e^{\theta \widehat{L}^{(n)}} g^{(n)}_{n^{-1/2} \theta}(R^{(n)}) 1(\widehat{L}^{(n)} \le x)\right] {\theta} n^{-1} \nonumber\\
 & \qquad + \left(n^{-1/2} \theta  - \frac 12 \sigma_{S}^{2} n^{-1} \theta^{2}  + o(\theta^{2} n^{-1}) \right) \mu^{(n)}(0) \nonumber\\
 & \qquad \hspace{15ex} \times \dd{E}\left[(1-r^{(n)}_{d}) 1(L^{(n)}=0) g^{(n)}_{n^{-1/2} \theta}(R^{(n)})\right].
\end{align}
Similarly,
\begin{align}
 \label{eq:SDQ>H3}
 & \dd{E}\left[\sr{H}^{(n)}f^{(n)}_{n^{-1/2}\theta,(x,\infty)]}(X^{(n)})\right] + \dd{E}\left[\sr{D}^{(n)}_{ed}(\theta) e^{\theta \widehat{L}^{(n)}} 1(\widehat{L}^{(n)} > x) g^{(n)}_{n^{-1/2}\theta}(R^{(n)})\right] \nonumber\\
 & \quad = \frac 12 \dd{E}\Big[ \left((\widehat{\sigma}^{(n)})^{2}(\widehat{L}^{(n)}) \left(\widehat{\beta}^{(n)}(\widehat{L}^{(n)}) + {\theta} \right) + \theta o(1) \right) \nonumber\\
 & \qquad \hspace{15ex} \times  e^{\theta \widehat{L}^{(n)}} g^{(n)}_{n^{-1/2} \theta}(R^{(n)}) 1(\widehat{L}^{(n)} \le x)\Big] {\theta} n^{-1}.
\end{align}

Since $r^{(n)}_{e} = o(n^{-1})$ and $r^{(n)}_{d} = o(n^{-1})$ by \eq{SDQ-Re 1} and \eq{SDQ-Rd 1} in \lem{SDQ-moment} and $\widehat{\lambda}^{(n)}(\widehat{L}^{(n)}) = \lambda^{(n)}(L^{(n)})$ and $\widehat{\mu}^{(n)}(\widehat{L}^{(n)}) = \mu^{(n)}(L^{(n)})$ are uniformly bounded by \lem{h-limit},
\begin{align*}
  \dd{E}\left[\sr{D}^{(n)}_{ed}(\theta) e^{\theta \widehat{L}^{(n)}} 1(\widehat{L}^{(n)} \le x) g^{(n)}_{n^{-1/2}\theta}(R^{(n)})\right] = \theta o(n^{-1}).
\end{align*}
by \eq{SDQ-eta-zeta1}. Then, we have the following lemma.

\begin{lemma}[Continuous part of the asymptotic BAR]
 \label{lem:BAR=<P1}
 For $0 \le x < y$, as $n \to \infty$,
\begin{align}
\label{eq:BAR=<P1}
  & \dd{E}\left[\sr{H}^{(n)}f^{(n)}_{n^{-1/2}\theta,[0,x]}(X^{(n)})\right]  \nonumber\\
  & \quad = \frac 12 \dd{E}\left[ \left((\widehat{\sigma}^{(n)})^{2}(\widehat{L}^{(n)}) \left(\widehat{\beta}^{(n)}(\widehat{L}^{(n)}) + {\theta} \right) + \theta o(1) \right) e^{\theta \widehat{L}^{(n)}} (1+o(1)) 1(\widehat{L}^{(n)} \le x)\right] {\theta} n^{-1} \nonumber\\
  & \qquad + \mu^{(n)}(0) (1+ \theta o(1)) n^{1/2} \dd{P}[\widehat{L}^{(n)} = 0] {\theta} n^{-1}, \qquad \theta \in \dd{R}, \\
\label{eq:BAR>P1}
  & \dd{E}\left[\sr{H}^{(n)}f^{(n)}_{n^{-1/2}\theta,(x,\infty)}(X^{(n)})\right]  \nonumber\\
  & \quad = \frac 12 \dd{E}\Big[ \left((\widehat{\sigma}^{(n)})^{2}(\widehat{L}^{(n)}) \left(\widehat{\beta}^{(n)}(\widehat{L}^{(n)}) + \theta \right) + \theta o(1) \right) \nonumber\\
 & \qquad \hspace{20ex} \times  e^{\theta \widehat{L}^{(n)}} (1+o(1)) 1(\widehat{L}^{(n)} > x)\Big] \theta n^{-1}, \qquad \theta \le 0.
\end{align}
\end{lemma}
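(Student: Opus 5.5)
The plan is to start from the exact identity \eq{SDQ-H2} and reduce it in three moves: (i) Taylor-expand the bracket using \lem{SDQ-eta-zeta1}, (ii) show the truncation terms $r^{(n)}_{e}, r^{(n)}_{d}$ are negligible via \lem{SDQ-moment}, and (iii) replace $g^{(n)}_{n^{-1/2}\theta}(R^{(n)})$ by $1+o(1)$ via \lem{SDQ-gn-limit}. We begin with the factor
\[
\widehat{\lambda}^{(n)}(\widehat{L}^{(n)})(1-r^{(n)}_{e})\eta^{(n)}(n^{-1/2}\theta)+\widehat{\mu}^{(n)}(\widehat{L}^{(n)})(1-r^{(n)}_{d})\zeta^{(n)}(n^{-1/2}\theta).
\]
Substituting \eq{SDQ-eta1}--\eq{SDQ-zeta1}, using $\widehat{\lambda}^{(n)}-\widehat{\mu}^{(n)}=n^{-1/2}\widehat{b}^{(n)}$ and the definitions \eq{bsber-n}, this equals
\[
\bigl(\tfrac12(\widehat{\sigma}^{(n)})^{2}(\widehat{L}^{(n)})(\widehat{\beta}^{(n)}(\widehat{L}^{(n)})+\theta)+\theta o(1)\bigr)\theta n^{-1}-\sr{D}^{(n)}_{ed}(\theta).
\]
The $o(1)$ is uniform in the state, because $\widehat{\lambda}^{(n)}$ and $\widehat{\mu}^{(n)}$ are uniformly bounded by \lem{h-limit} and (\sect{preliminaries}.a). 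This yields \eq{SDQ=<H3} and \eq{SDQ>H3}. The boundary term at $L^{(n)}=0$ is handled by expanding $-\zeta^{(n)}(n^{-1/2}\theta)=n^{-1/2}\theta(1+\theta O(n^{-1/2}))$. Writing $\dd{P}[L^{(n)}=0]=n^{1/2}\dd{P}[\widehat{L}^{(n)}=0]\,n^{-1/2}$ then produces the factor $\mu^{(n)}(0)n^{1/2}\dd{P}[\widehat{L}^{(n)}=0]\theta n^{-1}$.

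Next we dispose of the truncation terms. By \eq{SDQ-Re 1} and \eq{SDQ-Rd 1}, together with the uniform bound on $\alpha^{(n)}_{e}$ from \lem{alpha 1} and the uniform lower bounds on the speeds from (f1) and \lem{h-limit}, we get $\dd{P}[R^{(n)}_{e}>n^{1/2}]=o(n^{-1})$, and similarly for $R^{(n)}_{d}$, uniformly in $n$. Since $|\eta^{(n)}(n^{-1/2}\theta)|,|\zeta^{(n)}(n^{-1/2}\theta)|=O(|\theta|n^{-1/2})$, the term $\dd{E}[\sr{D}^{(n)}_{ed}(\theta)e^{\theta\widehat{L}^{(n)}}1(\cdot)g^{(n)}]$ is $\theta o(n^{-3/2})$. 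This is absorbed into the $\theta o(1)\theta n^{-1}$ remainder. One point needs care: for $\theta>0$ with $I=(x,\infty)$, the factor $e^{\theta\widehat{L}^{(n)}}$ is unbounded, and that is why \eq{BAR>P1} is stated only for $\theta\le0$. For $I=[0,x]$, $e^{\theta\widehat{L}^{(n)}}\le e^{|\theta|x}$ is bounded, so all $\theta$ are allowed. The same reasoning shows the factor $(1-r^{(n)}_{d})$ in the boundary term contributes only $1+o(1)$.

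Finally we replace $g^{(n)}_{n^{-1/2}\theta}(R^{(n)})$ by $1+o(1)$. By \eq{SDQ-eta-zeta1}, $|\log g^{(n)}_{n^{-1/2}\theta}(R^{(n)})|\le|\theta|(d_{A}n^{-1/2}R^{(n)}_{e}+d_{S}n^{-1/2}R^{(n)}_{d})\wedge|\theta|(d_{A}+d_{S})$. Uniform integrability of $R^{(n)}_{e}$ and $R^{(n)}_{d}$ (from \lem{SDQ-moment}) therefore gives $\dd{E}[|g^{(n)}-1|\,|Y|]\to0$ for any uniformly bounded $Y$. Here the $Y$'s are $(\widehat{\sigma}^{(n)})^{2}(\widehat{\beta}^{(n)}+\theta)e^{\theta\widehat{L}^{(n)}}1(\cdot)$, which are bounded because $\widehat{b}^{(n)}$ and $\widehat{\sigma}^{(n)}$ are uniformly bounded by \lem{h-limit} and (f1). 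This is what the notation $(1+o(1))$ inside the expectation signifies.

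The main obstacle is the boundary term. There $g^{(n)}$ multiplies $1(L^{(n)}=0)$, whose probability is $O(n^{-1/2})$, and we need the error to be relative, i.e. $(1+\theta o(1))$ rather than an additive $o(1)$. To get this, I would bound $\dd{E}[|g^{(n)}-1|1(L^{(n)}=0)]\le|\theta|C n^{-1/2}\dd{E}[(R^{(n)}_{e}+R^{(n)}_{d})1(L^{(n)}=0)]$. The difficulty is that uniform integrability of $R^{(n)}_{d}$ on $\{L^{(n)}=0\}$ is not provided by \lem{SDQ-moment}. I would handle it through \eq{SDQ-Rd 2}, whose bound on $\dd{E}[R^{(n)}_{d}]$ holds without restriction, combined with Cauchy--Schwarz using the second moment from \eq{SDQ-Rd 2} with $j=2$. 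This gives $\dd{E}[R^{(n)}_{d}1(L^{(n)}=0)]\le C\,\dd{P}[L^{(n)}=0]^{1/2}$. That suffices if $\dd{P}[L^{(n)}=0]$ is comparable to $n^{-1/2}$. If it is not, I would instead use that $R^{(n)}_{d}$ is frozen while the queue is empty, which makes its Palm description at the departure epochs available via \eq{SDQ-balance 1}.
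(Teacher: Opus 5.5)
Your proposal is correct and follows the paper's route. The paper also expands the bracket with \lem{SDQ-eta-zeta1} to get \eq{SDQ=<H3} and \eq{SDQ>H3}, discards the $\sr{D}^{(n)}_{ed}$ term via \lem{SDQ-moment}, and replaces $g^{(n)}_{n^{-1/2}\theta}(R^{(n)})$ by $1+o(1)$ through \lem{SDQ-gn-limit}; its written proof is only a one-line appeal to that last lemma.

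Your closing case split on the boundary term is unnecessary. \lem{SDQ-basic 1} together with $\lambda^{(n)}(0)=\mu^{(n)}(0)$ gives
$\lambda^{(n)}(0)\dd{P}[L^{(n)}=0]=-n^{-1/2}\dd{E}[\widehat{b}^{(n)}(\widehat{L}^{(n)})1(L^{(n)}\ge 1)]=O(n^{-1/2})$.
Plugging this into your Cauchy--Schwarz estimate already gives
$\dd{E}[(g^{(n)}_{n^{-1/2}\theta}(R^{(n)})-1)1(L^{(n)}=0)]=O(|\theta|n^{-3/4})$,
which is all that \eq{asym-BAR=<1} requires. The frozen-$R^{(n)}_{d}$ fallback would in any case control only the $R^{(n)}_{d}$ contribution, not $R^{(n)}_{e}$ on $\{L^{(n)}=0\}$.
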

\begin{proof}
\eq{BAR=<P1} is obtained by applying \lem{SDQ-gn-limit} to \eq{SDQ=<H3}. Similarly, \eq{BAR>P1} is obtained by \eq{SDQ>H3}.
\end{proof}

\subsection{Discontinuous part of the pre-limit BAR}
\label{sec:discontinuous}

We next compute the Palm expectation terms in \eq{SDQ-BAR1} for $f = f^{(n)}_{n^{-1/2}\theta,I}$ with $I = [0,x]$ and $I = (x,y]$. Recall the definition \eq{EnDI} of $E^{(n)}_{\Delta,I}(\theta)$. Note that $\{\widehat{L}^{(n)} \le x\} = \{L^{(n)} \le q^{(n)}_{x}\}$ by \eq{hLn-Ln}. Let $R^{(n)}_{1}(t) = (R^{(n)}_{e,1}(t), R^{(n)}_{d,1}(t))$, which is the intermediate state of the nominal residual times. From the definition $\eta^{(n)}$ of \eq{SDQ-boundary1}, we have, under $\dd{P}^{(n)}_{e}$, $L^{(n)}_{1}(0) = L^{(n)}(0-) + 1$ and
\begin{align}
\label{eq:fn-X1}
  & e^{\theta n^{-1/2} L^{(n)}_{1}(0)} \dd{E}^{(n)}_{e} \left[e^{-\eta^{(n)}(n^{-1/2} \theta) (T_{A} \wedge n^{1/2})}\right] e^{-\zeta^{(n)}(n^{-1/2} \theta) (R^{(n)}_{d,1}(0) \wedge n^{1/2})} \nonumber\\
  & \quad = e^{\theta n^{-1/2} L^{(n)}(0-)} e^{-\zeta^{(n)}(n^{-1/2} \theta) (R^{(n)}_{d}(0-) \wedge n^{1/2})} = e^{\theta n^{-1/2} L^{(n)}(0-)} g_{n^{-1/2} \theta} (R^{(n)}(0-)),\\
\label{eq:fn-X-}
 & 1(L^{(n)}_{1}(0) \le n^{1/2} x, L^{(n)}(0-) \not= q^{(n)}_{x}) = 1(L^{(n)}_{1}(0) \le n^{1/2} x, L^{(n)}(0-) \le n^{1/2} x - 1)  \nonumber\\
 & \quad = 1(L^{(n)}(0-) \le n^{1/2} x - 1) = 1(L^{(n)}(0-) \le n^{1/2} x, L^{(n)}(0-) \not= q^{(n)}_{x}).
\end{align}
Multiplying \eq{fn-X1} and \eq{fn-X-} side by side and taking their expectations under $\dd{P}^{(n)}_{e}$, from the definitions of $\Delta_{e}$ for $f^{(n)}_{n^{-1/2}\theta,[0,x]}$, we have
\begin{align*}
 & \dd{E}^{(n)}_{e}\left[ \Delta_{e} f^{(n)}_{n^{-1/2}\theta,[0,x]}(X^{(n)})(0) 1(L^{(n)}(0-) \not= q^{(n)}_{x})\right] = 0.
\end{align*}
Similarly, 
\begin{align*}
 & \dd{E}^{(n)}_{d}\left[\Delta_{d} f^{(n)}_{n^{-1/2}\theta,[0,x]}(X^{(n)})(0) 1(L^{(n)}(0) \not= q^{(n)}_{x})\right] = 0.
\end{align*}
Thus, for $x \ge 0$, define
\begin{align*}
 & E^{(n)}_{\Delta,e,[0,x]}(\theta) = \dd{E}^{(n)}_{e}\left[\Delta_{e} f^{(n)}_{n^{-1/2}\theta,[0,x]}(X^{(n)})(0) 1(L^{(n)}(0-) = q^{(n)}_{x}) \right],\\
 & E^{(n)}_{\Delta,d,[0,x]}(\theta) = \dd{E}^{(n)}_{d}\left[\Delta_{d} f^{(n)}_{n^{-1/2}\theta,[0,x]}(X^{(n)})(0) 1(L^{(n)}(0) = q^{(n)}_{x}) \right],
\end{align*}
then
\begin{align}
 \label{eq:SDQ=<df}
 E^{(n)}_{\Delta,[0,x]}(\theta) = E^{(n)}_{\Delta,e,[0,x]}(\theta) + E^{(n)}_{\Delta,d,[0,x]}(\theta), \qquad \theta \in \dd{R}, \ x \ge 0.
\end{align}

From the definitions of $X^{(n)}_{1}(0)$ and $\Delta_{e} f^{(n)}_{n^{-1/2}\theta,[0,x]}(X^{(n)})(0)$, we have
\begin{align}
 \label{eq:SDQ=<df1}
 & E^{(n)}_{\Delta,e,[0,x]}(\theta) = \dd{E}^{(n)}_{e}[(f^{(n)}_{n^{-1/2}\theta,[0,x]}(X^{(n)}_{1}(0)) - f^{(n)}_{n^{-1/2}\theta,[0,x]}(X^{(n)}(0-))) 1(L^{(n)}(0-) = q^{(n)}_{x})] \nonumber\\
 & \quad = \dd{E}^{(n)}_{e} \Big[e^{n^{-1/2}\theta (q^{(n)}_{x} + 1)} g^{(n)}_{n^{-1/2}\theta}(R^{(n)}_{1}(0))1(L^{(n)}_{1}(0) \le q^{(n)}_{x}, L^{(n)}(0-) = q^{(n)}_{x})  \nonumber\\
 & \hspace{15ex} - e^{n^{-1/2}{\theta} q^{(n)}_{x}} g^{(n)}_{n^{-1/2}{\theta}}(R^{(n)}(0-)) 1(L^{(n)}(0-) \le q^{(n)}_{x}, L^{(n)}(0-) = q^{(n)}_{x}) \Big] \nonumber\\
 & \quad = - e^{n^{-1/2}{\theta} q^{(n)}_{x}} \dd{E}^{(n)}_{e}[e^{-\zeta^{(n)}(n^{-1/2}{\theta}) \widecheck{R}^{(n)}_{d}(0-)} 1(L^{(n)}(0-) = q^{(n)}_{x})],
\end{align}
where the second equality is obtained applying \eq{SDQ-boundary1}, and the last equality is obtained because $L^{(n)}_{1}(0) = L^{(n)}(0-) + 1$ and $R^{(n)}_{d,1}(0) = R^{(n)}_{d}(0-)$ under $\dd{P}^{(n)}_{e}$. Similarly, we have
\begin{align}
 \label{eq:SDQ=<df2}
 & E^{(n)}_{\Delta,d,[0,x]}(\theta)  = \dd{E}^{(n)}_{d}[(f^{(n)}_{n^{-1/2}\theta,[0,x]}(X^{(n)}(0)) - f^{(n)}_{n^{-1/2}\theta,[0,x]}(X^{(n)}_{1}(0))) 1(L^{(n)}(0) = q^{(n)}_{x})]\nonumber\\
 & \quad = \dd{E}^{(n)}_{d} \Big[e^{n^{-1/2}{\theta} q^{(n)}_{x}} g^{(n)}_{n^{-1/2}{\theta}}(R^{(n)}(0))1(L^{(n)}(0) \le q^{(n)}_{x},L^{(n)}(0) = q^{(n)}_{x})  \nonumber\\
 & \hspace{15ex} - e^{n^{-1/2}\theta (q^{(n)}_{x}+1)} g^{(n)}_{n^{-1/2}\theta}(R^{(n)}_{1}(0)) 1(L^{(n)}_{1}(0) \le q^{(n)}_{x},L^{(n)}(0) = q^{(n)}_{x}) \Big] \nonumber\\
 & \quad = e^{n^{-1/2}{\theta} (q^{(n)}_{x}+1)} \dd{E}^{(n)}_{d}[e^{-\eta^{(n)}(n^{-1/2}{\theta}) \widecheck{R}^{(n)}_{e}(0)} 1(L^{(n)}(0) = q^{(n)}_{x})],
\end{align}
because $L^{(n)}_{1}(0) = L^{(n)}(0) + 1$ and $R^{(n)}_{e}(0) = R^{(n)}_{e,1}(0)$ under $\dd{P}^{(n)}_{d}$. Then, substituting \eq{SDQ=<df1} and \eq{SDQ=<df2} {into} \eq{SDQ=<df}, we have
\begin{align}
 \label{eq:SDQ=<En1}
 E^{(n)}_{\Delta,[0,x]}(\theta) & = e^{n^{-1/2}{\theta} (q^{(n)}_{x}+1)} \dd{E}^{(n)}_{d}[e^{-\eta^{(n)}(n^{-1/2}{\theta}) \widecheck{R}^{(n)}_{e}(0)} 1(L^{(n)}(0) = q^{(n)}_{x})] \nonumber\\
 & \quad - e^{n^{-1/2}{\theta} q^{(n)}_{x}} \dd{E}^{(n)}_{e}[e^{-\zeta^{(n)}(n^{-1/2}{\theta}) \widecheck{R}^{(n)}_{d}(0-)} 1(L^{(n)}(0-) = q^{(n)}_{x})] \nonumber\\
 & = e^{n^{-1/2}{\theta} q^{(n)}_{x}} (e^{n^{-1/2}{\theta}} - 1) \dd{E}^{(n)}_{d}[e^{-\eta^{(n)}(n^{-1/2}{\theta}) \widecheck{R}^{(n)}_{e}(0)} 1(L^{(n)}(0) = q^{(n)}_{x})]  \nonumber\\
 & \quad - e^{n^{-1/2}{\theta} q^{(n)}_{x}} \big(\dd{E}^{(n)}_{e}[e^{-\zeta^{(n)}(n^{-1/2}{\theta}) \widecheck{R}^{(n)}_{d}(0-)} 1(L^{(n)}(0-) = q^{(n)}_{x})] \nonumber\\
 & \hspace{15ex} - \dd{E}^{(n)}_{d}[e^{-\eta^{(n)}(n^{-1/2}{\theta}) \widecheck{R}^{(n)}_{e}(0)} 1(L^{(n)}(0) = q^{(n)}_{x})] \big).
\end{align}

Similarly, for $E^{(n)}_{\Delta,(x,\infty)}(\theta)$, define
\begin{align*}
 & E^{(n)}_{\Delta,e,(x,\infty)}(\theta) = \dd{E}^{(n)}_{e}\left[\Delta_{e} f^{(n)}_{n^{-1/2}\theta,(x,\infty)}(X^{(n)})(0) 1(L^{(n)}(0-) = q^{(n)}_{x}) \right], \qquad x \ge 0,\\
 & E^{(n)}_{\Delta,d,(x,\infty)}(\theta) = \dd{E}^{(n)}_{d}\left[\Delta_{d} f^{(n)}_{n^{-1/2}\theta,(x,\infty)}(X^{(n)})(0) 1(L^{(n)}(0) = q^{(n)}_{x}) \right], \qquad x > 0,
\end{align*}
then
\begin{align}
 \label{eq:SDQ>df}
 E^{(n)}_{\Delta,(x,\infty)}(\theta) = E^{(n)}_{\Delta,e,(x,\infty)}(\theta) + E^{(n)}_{\Delta,d,(x,\infty)}(\theta).
\end{align}
Similarly to \eq{SDQ=<df1},  we have
\begin{align}
 \label{eq:SDQ>df1}
 & E^{(n)}_{\Delta,e,(x,\infty)}(\theta) = \dd{E}^{(n)}_{e}[(f^{(n)}_{n^{-1/2}\theta,(x,\infty)}(X^{(n)}_{1}(0)) - f^{(n)}_{n^{-1/2}\theta,(x,\infty)}(X^{(n)}(0-))) 1(L^{(n)}(0-) = q^{(n)}_{x})] \nonumber\\
 & \quad = \dd{E}^{(n)}_{e} \Big[e^{n^{-1/2}\theta (q^{(n)}_{x} + 1)} g^{(n)}_{n^{-1/2}\theta}(R^{(n)}_{1}(0))1(L^{(n)}_{1}(0) > q^{(n)}_{x},L^{(n)}(0-) = q^{(n)}_{x})  \nonumber\\
 & \hspace{15ex} - e^{n^{-1/2}{\theta} q^{(n)}_{x}} g^{(n)}_{n^{-1/2}{\theta}}(R^{(n)}(0-)) 1(L^{(n)}(0-) > q^{(n)}_{x},L^{(n)}(0-) = q^{(n)}_{x}) \Big] \nonumber\\
 & \quad = e^{n^{-1/2}{\theta} q^{(n)}_{x}} \dd{E}^{(n)}_{e}[e^{-\zeta^{(n)}(n^{-1/2}{\theta}) \widecheck{R}^{(n)}_{d}(0-)} 1(L^{(n)}(0-) = q^{(n)}_{x})] = - E^{(n)}_{\Delta,e,[0,x]}(\theta).
\end{align}
Similarly to \eq{SDQ=<df2}, we have $E^{(n)}_{\Delta,d,(x,\infty)}(\theta)  = - E^{(n)}_{\Delta,d,[0,x]}(\theta)$. Hence, from \eq{SDQ>df}, we have
\begin{align}
 \label{eq:SDQ>En1}
 E^{(n)}_{\Delta,(x,\infty)}(\theta) & = - E^{(n)}_{\Delta,[0,x]}(\theta).
\end{align}

We will divide asymptotic BAR's from \eq{SDQ=<En1} and \eq{SDQ>En1} dividing by $n^{-1}$, and take their limits as $n \to \infty$. For these computations, we will use Lemmas \lemt{SDQ-moment} and \lemt{SDQ-t-moment}. We need one more lemma and its corollary for {evaluating} $\alpha^{(n)}_{e} E^{(n)}_{\Delta,[0,x]}(\theta)$.

\begin{lemma}
 \label{lem:SDQ-Red}
 For $j=1,2,3$ and $n \ge 1$,
 \begin{align}
  \label{eq:SDQ-Rd-1}
  & j \dd{E}[\mu^{(n)}(L^{(n)}) (R^{(n)}_{d})^{j-1} (1-r^{(n)}_{d}) 1(0 < L^{(n)} \le q^{(n)}_{x})] \nonumber\\
  & \quad = \alpha^{(n)}_{e} \Big(-\dd{E}^{(n)}_{e}[(\widecheck{R}^{(n)}_{d}(0-))^{j} 1(L^{(n)}(0-) = q^{(n)}_{x})] + \dd{E}[(\widecheck{T}^{(n)}_{S})^{j}] \dd{P}^{(n)}_{d}[L^{(n)}(0) \le q^{(n)}_x]\Big),\\
  \label{eq:SDQ-Rd-K}
  & j \dd{E}[\mu^{(n)}(L^{(n)}(0)) (R^{(n)}_{d})^{j-1} (1-r^{(n)}_{d}) 1( L^{(n)} > q^{(n)}_{x})]\nonumber\\
  & \quad = \alpha^{(n)}_{e} \Big( \dd{E}^{(n)}_{e}[(\widecheck{R}^{(n)}_{d}(0-))^{j} 1(L^{(n)}(0-) = q^{(n)}_{x})] + \dd{E}[(\widecheck{T}^{(n)}_{S})^{j}] \dd{P}^{(n)}_{d}[L^{(n)}(0) > q^{(n)}_{x}]\Big),
 \end{align}
 where recall that $r^{(n)}_{d} = 1(R^{(n)}_{d} > n^{1/2})$ and $r^{(n)}_{e} = 1(R^{(n)}_{e} > n^{1/2})$, then
 \begin{align}
  \label{eq:SDQ-Re-1}
  & j \dd{E}[\lambda^{(n)}(L^{(n)}) (R^{(n)}_{e})^{j-1} (1-r^{(n)}_{e}) 1(L^{(n)} \le q^{(n)}_{x})]  \nonumber\\
  & \quad = \alpha^{(n)}_{e}\Big( \dd{E}_{d}[(\widecheck{R}^{(n)}_{e}(0))^{j} 1(L^{(n)}(0) = q^{(n)}_{x})] + \dd{E}[(\widecheck{T}^{(n)}_{A})^{j}] \dd{P}^{(n)}_{e}[L^{(n)}(0) \le q^{(n)}_x]\Big),\\
  \label{eq:SDQ-Re-K}
  & j \dd{E}[\lambda^{(n)}(L^{(n)}) (1-r^{(n)}_{e}) 1(L^{(n)} > q^{(n)}_{x})] \nonumber\\
  & \quad = \alpha^{(n)}_{e}\Big(- \dd{E}_{d}[(\widecheck{R}^{(n)}_{e}(0))^{j} 1(L^{(n)}(0) = q^{(n)}_{x})] + \dd{E}[(\widecheck{T}^{(n)}_{A})^{j}] \dd{P}^{(n)}_{e}[L^{(n)}(0) > q^{(n)}_{x}] \Big).
 \end{align}
\end{lemma}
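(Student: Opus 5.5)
We apply the BAR \eq{BAR1} to test functions built from powers of the truncated nominal residual times, localized in the queue length. This mirrors the derivation of \eq{SDQ-Re 1} from $f(u,\vc{y}) = (y_{1}-w)1(y_{1} \ge w)$. For \eq{SDQ-Rd-1} and \eq{SDQ-Rd-K}, take
\begin{align*}
 f(u,\vc{v}) = (v_{2} \wedge n^{1/2})^{j} 1(u \in I),
\end{align*}
with $I = [0,q^{(n)}_{x}]$, respectively $I = (q^{(n)}_{x},\infty)$. For \eq{SDQ-Re-1} and \eq{SDQ-Re-K}, take the same function with $v_{1}$ in place of $v_{2}$. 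Each is bounded, has bounded right derivatives in $(v_{1},v_{2})$, and satisfies \eq{test-f} because $L^{(n)}$ changes only at counting times of $N^{(n)}$. So each belongs to $D^{p}_{b}(\dd{R}^{3}_{+})$ and \eq{BAR1} applies. We use $\alpha^{(n)}_{d} = \alpha^{(n)}_{e}$ from \lem{SDQ-basic 1} throughout.

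\textbf{Continuous part.} The right derivative of $(v\wedge n^{1/2})^{j}$ is $j v^{j-1}1(v < n^{1/2})$. Up to the null event $\{R^{(n)}_{d} = n^{1/2}\}$, which has zero stationary probability since $R^{(n)}_{d}(t)$ decreases at positive speed, this equals $j v^{j-1}(1-r^{(n)}_{d})$. Hence
\begin{align*}
 \dd{E}[\sr{H}^{(n)}f(X^{(n)})] = -j\,\dd{E}[\mu^{(n)}(L^{(n)})1(L^{(n)}>0)(R^{(n)}_{d})^{j-1}(1-r^{(n)}_{d})1(L^{(n)}\in I)].
\end{align*}
The factor $1(L^{(n)}>0)$ gives the restriction $0<L^{(n)}$ in \eq{SDQ-Rd-1}. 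It is automatic in \eq{SDQ-Rd-K}, since $q^{(n)}_{x} \ge 0$. For the arrival case there is no such indicator.

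\textbf{Jump parts, service test function.} Under $\dd{P}^{(n)}_{e}$, the $v_{2}$-coordinate is unchanged and $L^{(n)}$ moves from $L^{(n)}(0-)$ to $L^{(n)}(0-)+1$. So $\Delta_{e}f$ is nonzero only when the indicator $1(L^{(n)}\in I)$ flips, namely when $L^{(n)}(0-) = q^{(n)}_{x}$. This gives
\begin{align*}
 \mp(\widecheck{R}^{(n)}_{d}(0-))^{j} 1(L^{(n)}(0-) = q^{(n)}_{x}),
\end{align*}
with sign $-$ for $I=[0,q^{(n)}_{x}]$ and $+$ for the complement. Under $\dd{P}^{(n)}_{d}$, the $v_{2}$-coordinate jumps from $0$ to $T_{S}(N^{(n)}_{d}(0))$, and $L^{(n)}$ goes from $L^{(n)}(0)+1$ to $L^{(n)}(0)$. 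We get
\begin{align*}
 (\widecheck{T}_{S})^{j}1(L^{(n)}(0)\in I) - 0^{j}\cdot 1(L^{(n)}(0)+1 \in I) = (\widecheck{T}_{S})^{j}1(L^{(n)}(0)\in I),
\end{align*}
because $j \ge 1$ kills the pre-jump term. The new service time $T_{S}(N^{(n)}_{d}(0))$ is independent of the past. Its independence from $L^{(n)}(0)$ under the Palm measure gives the factorization $\dd{E}[(\widecheck{T}^{(n)}_{S})^{j}]\,\dd{P}^{(n)}_{d}[L^{(n)}(0)\in I]$. Substituting into \eq{BAR1} and rearranging yields \eq{SDQ-Rd-1} and \eq{SDQ-Rd-K}.

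\textbf{Jump parts, arrival test function.} Here the roles swap. Under $\dd{P}^{(n)}_{e}$, $v_{1}$ is reset to $T_{A}$ and $L^{(n)}$ becomes $L^{(n)}(0-)+1$. The contribution is $(\widecheck{T}^{(n)}_{A})^{j}1(L^{(n)}(0-)+1\in I)$, factored by independence into $\dd{E}[(\widecheck{T}^{(n)}_{A})^{j}]$ times a Palm probability. Under $\dd{P}^{(n)}_{d}$, $v_{1}$ is unchanged and $L^{(n)}$ drops by one. The indicator then flips exactly on $\{L^{(n)}(0) = q^{(n)}_{x}\}$, contributing $\pm(\widecheck{R}^{(n)}_{e}(0))^{j}1(L^{(n)}(0)=q^{(n)}_{x})$.

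\textbf{Expected obstacle.} The main difficulty is bookkeeping of the boundary indicators, which must match the displayed formulas exactly. The arrival-reset term naturally produces $\dd{P}^{(n)}_{e}[L^{(n)}(0-)+1 \le q^{(n)}_{x}]$. This must be reconciled with the stated $\dd{P}^{(n)}_{e}[L^{(n)}(0)\le q^{(n)}_{x}]$, noting that $L^{(n)}(0) = L^{(n)}(0-)+1$ under $\dd{P}^{(n)}_{e}$ when no departure occurs simultaneously. Simultaneous events can be handled through the arrival-first convention and the intermediate state $X^{(n)}_{1}$. If needed, the identity \eq{SDQ-balance 1} converts between $\dd{P}^{(n)}_{e}$ and $\dd{P}^{(n)}_{d}$ boundary probabilities. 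Finally, the case $j=3$ needs the third moment of $T_{A}$ and $T_{S}$ from \eq{moment-c}, but truncation at $n^{1/2}$ already keeps every term bounded, so no integrability issue arises.
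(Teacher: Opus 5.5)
Your proposal follows the paper's proof: the same four test functions $(v_{i}\wedge n^{1/2})^{j}1(u\le q^{(n)}_{x})$ and $(v_{i}\wedge n^{1/2})^{j}1(u>q^{(n)}_{x})$ are inserted into \eq{BAR1}, and your evaluation of the continuous and jump terms is correct. It is in fact more careful than the paper's sketch, and your computation also supplies the factor $(R^{(n)}_{e})^{j-1}$ that is missing on the left-hand side of \eq{SDQ-Re-K}.

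Your ``obstacle'' paragraph needs correcting. There is nothing to reconcile by argument. The BAR genuinely produces $\mathbb{P}^{(n)}_{e}[L^{(n)}(0-)+1\le q^{(n)}_{x}]$. This differs from the post-jump quantity $\mathbb{P}^{(n)}_{e}[L^{(n)}(0)\le q^{(n)}_{x}]$ by $\mathbb{P}^{(n)}_{e}[L^{(n)}(0-)=q^{(n)}_{x},\,\Delta N^{(n)}_{d}(0)=1]$. That term need not vanish, since the model allows simultaneous arrivals and departures. The arrival-first convention does not remove it, and \eq{SDQ-balance 1} does not control it.

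The statement's $L^{(n)}(0)$ under $\mathbb{P}^{(n)}_{e}$ must therefore be read as the intermediate value $L^{(n)}_{1}(0)=L^{(n)}(0-)+1$. This is exactly the reading \lem{SDQ-R2-1} relies on when it uses
$\mathbb{P}^{(n)}_{e}[L^{(n)}(0)\le q^{(n)}_{x}]-\mathbb{P}^{(n)}_{d}[L^{(n)}(0)\le q^{(n)}_{x}]=-\mathbb{P}^{(n)}_{e}[L^{(n)}(0-)=q^{(n)}_{x}]$.
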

\begin{proof}
 For $j \ge 1$, we first prove \eq{SDQ-Rd-1}. We apply $f(u,\vc{y}) = (y_{2} \wedge n^{1/2})^{j} 1(u \le q^{(n)}_{x})$ to \eq{BAR1}, then we have \eq{SDQ-Rd-1} because $\dd{E}^{(n)}_{d}[(\widecheck{R}^{(n)}_{d}(0-)(0))^{j}] = \dd{E}[(\widecheck{T}^{(n)}_{S})^{j}]$ and
 \begin{align*}
  & \sr{H}^{(n)} f(X^{(n)}(t)) = - j \mu^{(n)}(L^{(n)}(t)) (R^{(n)}_{d}(t))^{j-1} 1(R^{(n)}_{d} < n^{1/2}, 1 \le L^{(n)}(t) \le q^{(n)}_{x}),\\
  & \Delta_{e,x} f(X^{(n)})(t) = - (\widecheck{R}^{(n)}_{d}(0-)(t-))^{j} 1(L^{(n)}(t-) = q^{(n)}_{x}) 1(\Delta N^{(n)}_{e}(t) = 1),\\
  & \Delta_{d,x} f(X^{(n)})(t) = (\widecheck{R}^{(n)}_{d}(0-)(t))^{j} 1(L^{(n)}(t) \le q^{(n)}_{x}) 1(\Delta N^{(n)}_{d}(t) = 1).
 \end{align*}
 Similarly, \eq{SDQ-Rd-K} is obtained from \eq{BAR1} by applying $f(u,\vc{y}) = (y_{2} \wedge n^{1/2})^{j} 1(u > q^{(n)}_{x})$. \eq{SDQ-Re-1} and \eq{SDQ-Re-K} are similarly proved by applying $f(u,\vc{y}) = (y_{1} \wedge n^{1/2})^{j} 1(u \le q^{(n)}_{x})$ and $f(u,\vc{y}) = (y_{1} \wedge n^{1/2})^{j} 1(u > q^{(n)}_{x})$, respectively.
\end{proof}

From Lemmas \lemt{SDQ-moment} and \lemt{SDQ-Red}, we have the following corollary.

\begin{corollary}
 \label{cor:SDQ-Re-Rs 1}
 For each $j=1,2,3$ and $x \in \dd{R}_{+}$, 
\begin{align}
\label{eq:wRnd}
  & \sup_{n \ge 1} \dd{E}_{e}[(\widecheck{R}^{(n)}_{d}(0-))^{j} 1(L^{(n)}(0-) = q^{(n)}_{x})] < \infty,\\
\label{eq:wRne}
  & \sup_{n \ge 1}  \dd{E}_{d}[(\widecheck{R}^{(n)}_{e}(0))^{j} 1(L^{(n)}(0) = q^{(n)}_{x})] < \infty,
\end{align}
where recall that $\widecheck{R}^{(n)}_{e}(0) = R^{(n)}_{e}(0) \wedge n^{1/2}$ and $\widecheck{R}^{(n)}_{d}(0-) = R^{(n)}_{d}(0-) \wedge n^{1/2}$.
\end{corollary}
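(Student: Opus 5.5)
We read the bounds directly off the identities of \lem{SDQ-Red}. For \eq{wRnd}, rearrange \eq{SDQ-Rd-K}, or equivalently add \eq{SDQ-Rd-1} and \eq{SDQ-Rd-K}. The $\dd{E}^{(n)}_{e}$ term then appears with a plus sign in \eq{SDQ-Rd-K}. Every other term in that identity is nonnegative except the one being isolated, so
\begin{align*}
 \alpha^{(n)}_{e} \dd{E}^{(n)}_{e}[(\widecheck{R}^{(n)}_{d}(0-))^{j} 1(L^{(n)}(0-) = q^{(n)}_{x})] \le j \dd{E}[\mu^{(n)}(L^{(n)}) (R^{(n)}_{d})^{j-1} (1-r^{(n)}_{d})].
\end{align*}
The right-hand side is at most $j \mu^{(n)}_{\sup} \dd{E}[(R^{(n)}_{d})^{j-1}]$. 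Similarly, \eq{SDQ-Re-1} gives
\begin{align*}
 \alpha^{(n)}_{e} \dd{E}^{(n)}_{d}[(\widecheck{R}^{(n)}_{e}(0))^{j} 1(L^{(n)}(0) = q^{(n)}_{x})] \le j \lambda^{(n)}_{\sup} \dd{E}[(R^{(n)}_{e})^{j-1}].
\end{align*}
Both bounds hold for every $x$.

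Next we bound the right-hand sides uniformly in $n$. By the proof of \lem{alpha 1}, $\sup_{n} \lambda^{(n)}_{\sup}$ and $\sup_{n} \mu^{(n)}_{\sup}$ are finite under (\sect{preliminaries}.f). For $j = 1$ the moment factor is $1$. For $j = 2, 3$, \eq{SDQ-Re 2} and \eq{SDQ-Rd 2} bound $\dd{E}[(R^{(n)}_{e})^{j-1}]$ and $\dd{E}[(R^{(n)}_{d})^{j-1}]$ by constants times $\dd{E}[T^{j}]$, with $T = T_{A}$ or $T_{S}$ respectively. These are finite by \eq{moment-c}, since $j \le 3$. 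The prefactors $(\lambda^{(n)}_{\inf})^{-1}$, $(\mu^{(n)}_{\inf})^{-1}$ and $\alpha^{(n)}_{e}$ are uniformly bounded: the infima converge to $\inf \lambda$ and $\inf \mu$, which are positive by (f1) and \lem{h-limit}, and $\alpha^{(n)}_{e}$ is uniformly bounded by \lem{alpha 1}.

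Finally we divide by $\alpha^{(n)}_{e}$, so a uniform lower bound on $\alpha^{(n)}_{e}$ is needed. We expect this to be the main obstacle. By \eq{SDQ-alpha 1}, $\alpha^{(n)}_{e} \ge \lambda^{(n)}_{\inf}$, and $\lambda^{(n)}_{\inf}$ is bounded away from zero uniformly in $n$ by the same argument as for the upper bounds: it is eventually at least $\inf_{u} \lambda(u) - \varepsilon > 0$, and each of the finitely many remaining terms is positive by (\sect{preliminaries}.a). Combining the three steps gives both suprema finite. Some care is needed over the notation $\dd{E}_{d}$ versus $\dd{E}^{(n)}_{d}$ and over the typographic artifacts in the proof of \lem{SDQ-Red}, but the bound does not depend on these details.
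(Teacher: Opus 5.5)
Your proof is correct and is essentially the paper's: isolate the Palm term in one of the identities of \lem{SDQ-Red}, drop the nonnegative terms, and control what remains by \lem{SDQ-moment} plus uniform-in-$n$ bounds from (\sect{preliminaries}.f). One slip: adding \eq{SDQ-Rd-1} and \eq{SDQ-Rd-K} cancels the Palm term, so that is not an equivalent way to isolate it.

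The only real difference is in \eq{wRnd}. The paper rearranges \eq{SDQ-Rd-1}, where the Palm term carries a minus sign, so $\alpha^{(n)}_{e}$ cancels and $\dd{E}^{(n)}_{e}[(\widecheck{R}^{(n)}_{d}(0-))^{j} 1(L^{(n)}(0-) = q^{(n)}_{x})] \le \dd{E}[T_{S}^{j}]$ is immediate. Your use of \eq{SDQ-Rd-K} instead routes through \eq{SDQ-Rd 2} and needs the uniform lower bound $\alpha^{(n)}_{e} \ge \lambda^{(n)}_{\inf}$, which you correctly supply.
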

\begin{proof}
From \eq{SDQ-Rd-1}, we have
\begin{align*}
  & \alpha^{(n)}_{e} \dd{E}^{(n)}_{e}[(\widecheck{R}^{(n)}_{d}(0-))^{j} 1(L^{(n)}(0-) = q^{(n)}_{x})] \nonumber\\
  & \quad = \alpha^{(n)}_{e} \dd{E}[(\widecheck{T}^{(n)}_{S})^{j}] \dd{P}^{(n)}_{d}[L^{(n)}(0) \le q^{(n)}_x] \nonumber\\
  & \qquad - j \dd{E}[\mu^{(n)}(L^{(n)}) (R^{(n)}_{d})^{j-1} (1-r^{(n)}_{d}) 1(0 < L^{(n)} \le q^{(n)}_{x})].
\end{align*}
Hence, \eq{wRnd} is proved by \lem{SDQ-moment}. Similarly, \eq{wRne} follows from \eq{SDQ-Re-1}.
\end{proof}

Note that two different types of probability measures, stationary and Palm, are involved in the BAR \eq{BAR1}. This causes difficulty in analyzing it. To relax this problem, we consider relations among them. To this end, we introduce the following auxiliary quantity. Define, for $x \in \dd{R}_{+}$,
\begin{align}
 \label{eq:SDQ-Dn-i}
 \Delta^{(n)}_{x} = \alpha^{(n)}_{e} \Big( & \dd{E}^{(n)}_{e}[\widecheck{R}^{(n)}_{d}(0-) 1(L^{(n)}(0-) = q^{(n)}_{x})] + \dd{E}^{(n)}_{d}[\widecheck{R}^{(n)}_{e}(0) 1(L^{(n)}(0) = q^{(n)}_{x})] \nonumber\\
 & - \dd{P}^{(n)}_{e}[L^{(n)}(0-) = q^{(n)}_{x}] \Big).
\end{align}
Then, $\Delta^{(n)}_{x}$ is well defined and finite by \cor{SDQ-Re-Rs 1}, and we have the following lemma, which gives relations the expectations under $\dd{P}$, $\dd{P}^{(n)}_{e}$ and $\dd{P}^{(n)}_{d}$.
\begin{lemma}
 \label{lem:SDQ-R2-1}
 \begin{align}
  \label{eq:SDQ-Dn-j+}
  & n^{-1/2} \dd{E}[\widehat{b}^{(n)}(\widehat{L}^{(n)}) 1(\widehat{L}^{(n)} \le n^{-1/2} q^{(n)}_{x})] + \mu^{(n)}(0) \dd{P}[\widehat{L}^{(n)} = 0] + o(n^{-1}) = \Delta^{(n)}_{x},\\
  \label{eq:SDQ-Dn-j-}
  & n^{-1/2} \dd{E}[\widehat{b}^{(n)}(\widehat{L}^{(n)}) 1(\widehat{L}^{(n)} > n^{-1/2} q^{(n)}_{x})] + o(n^{-1}) = - \Delta^{(n)}_{x},\\
  \label{eq:BAR-Pn-0}
  & \mu^{(n)}(0) \dd{P}[L^{(n)} = 0] = - n^{-1/2} \dd{E}[\widehat{b}^{(n)}(\widehat{L}^{(n)})] + o(n^{-1}).
 \end{align}
\end{lemma}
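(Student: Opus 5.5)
Equations \eq{SDQ-Dn-j+}--\eq{BAR-Pn-0} are first-order (linear in the residual times) identities, so I would derive them from \lem{SDQ-Red} with $j=1$ together with \lem{SDQ-basic 1}, rather than from the exponential test function.

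\emph{Step 1: the case $j=1$ of \eq{SDQ-Re-1} and \eq{SDQ-Rd-1}.} With $j=1$, \eq{SDQ-Re-1} reads
\begin{align*}
\dd{E}[\lambda^{(n)}(L^{(n)})(1-r^{(n)}_{e})1(L^{(n)} \le q^{(n)}_{x})] = \alpha^{(n)}_{e}\big(\dd{E}^{(n)}_{d}[\widecheck{R}^{(n)}_{e}(0)1(L^{(n)}(0)=q^{(n)}_{x})] + \dd{E}[\widecheck{T}^{(n)}_{A}]\,\dd{P}^{(n)}_{e}[L^{(n)}(0-)\le q^{(n)}_{x}]\big),
\end{align*}
where the Palm event is understood for the state just before the arrival. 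Similarly, \eq{SDQ-Rd-1} with $j=1$ gives $\dd{E}[\mu^{(n)}(L^{(n)})(1-r^{(n)}_{d})1(0<L^{(n)}\le q^{(n)}_{x})]$ as $\alpha^{(n)}_{e}$ times
\begin{align*}
-\dd{E}^{(n)}_{e}[\widecheck{R}^{(n)}_{d}(0-)1(L^{(n)}(0-)=q^{(n)}_{x})] + \dd{E}[\widecheck{T}^{(n)}_{S}]\,\dd{P}^{(n)}_{d}[L^{(n)}(0)\le q^{(n)}_{x}].
\end{align*}

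\emph{Step 2: subtract and identify $\Delta^{(n)}_{x}$.} Subtract the second identity from the first. By \eq{SDQ-balance 1},
\begin{align*}
\dd{P}^{(n)}_{d}[L^{(n)}(0)\le q^{(n)}_{x}] - \dd{P}^{(n)}_{e}[L^{(n)}(0-)\le q^{(n)}_{x}-1] = \dd{P}^{(n)}_{e}[L^{(n)}(0-)=q^{(n)}_{x}].
\end{align*}
The Palm terms on the right-hand side therefore combine exactly into $\Delta^{(n)}_{x}$ of \eq{SDQ-Dn-i}, up to three kinds of error:
\begin{itemize}
\item the truncation errors $\dd{E}[\widecheck{T}^{(n)}_{z}]-1$, which are $o(n^{-1})$ by \eq{SDQ-Tu 1} with $j=0$;
\item the $r^{(n)}_{e}, r^{(n)}_{d}$ terms on the left, which are $O(\dd{P}[R^{(n)}_{z}>n^{1/2}]) = o(n^{-1})$ by \eq{SDQ-Re 1} and \eq{SDQ-Rd 1}, since $\lambda^{(n)}$ and $\mu^{(n)}$ are uniformly bounded in $n$ and $\alpha^{(n)}_{e}$ is uniformly bounded by \lem{alpha 1};
\item the $\mu^{(n)}$-part at $L^{(n)}=0$, which is missing from the $\mu$-sum.
\end{itemize}
Recall $\mu^{(n)}(0)=\lambda^{(n)}(0)$. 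Writing $\lambda^{(n)}(\ell)-\mu^{(n)}(\ell)1(\ell>0)$ as $n^{-1/2}\widehat{b}^{(n)}$ on $\{L^{(n)}>0\}$, plus $\lambda^{(n)}(0)$ on $\{L^{(n)}=0\}$, gives exactly the left-hand side of \eq{SDQ-Dn-j+}. Here $\{L^{(n)}\le q^{(n)}_{x}\} = \{\widehat{L}^{(n)}\le n^{-1/2}q^{(n)}_{x}\}$, and we note $\widehat{b}^{(n)}(0)=0$ by the convention $\mu^{(n)}(0)=\lambda^{(n)}(0)$.

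\emph{Step 3: the complementary identity and \eq{BAR-Pn-0}.} I would obtain \eq{SDQ-Dn-j-} in the same way from \eq{SDQ-Re-K} and \eq{SDQ-Rd-K} with $j=1$. There the Palm boundary terms appear with opposite signs, and $\dd{P}^{(n)}_{e}[L^{(n)}(0-)\ge q^{(n)}_{x}] - \dd{P}^{(n)}_{d}[L^{(n)}(0)>q^{(n)}_{x}] = \dd{P}^{(n)}_{e}[L^{(n)}(0-)=q^{(n)}_{x}]$, which yields $-\Delta^{(n)}_{x}$. Alternatively, I would simply add the two identities: using \eq{SDQ-alpha 1} and \eq{SDQ-alpha 2} together with $\alpha^{(n)}_{e}=\alpha^{(n)}_{d}$ gives
\begin{align*}
\dd{E}[\lambda^{(n)}(L^{(n)})] - \dd{E}[\mu^{(n)}(L^{(n)})1(L^{(n)}>0)] = 0.
\end{align*}
This is \eq{BAR-Pn-0} (even without the error term), and \eq{SDQ-Dn-j-} then follows from \eq{SDQ-Dn-j+} by subtraction.

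\emph{Main obstacle.} The delicate point is the exact bookkeeping of the boundary level $q^{(n)}_{x}$. The Palm events must be matched correctly: in \lem{SDQ-Red}, the event under $\dd{P}^{(n)}_{e}$ should be read as $L^{(n)}(0-)$, and the arrival-first ordering matters. Only with this matching does \eq{SDQ-balance 1} turn the differences of Palm probabilities into the single boundary mass $\dd{P}^{(n)}_{e}[L^{(n)}(0-)=q^{(n)}_{x}]$ with the correct sign. I would also check that the error terms are uniform in $x$, which holds because every bound used above is independent of $x$.
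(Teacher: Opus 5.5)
Your proof is correct. For \eq{SDQ-Dn-j+} and \eq{SDQ-Dn-j-} it follows the paper's argument. You subtract the $j=1$ case of \eq{SDQ-Rd-1} from \eq{SDQ-Re-1} (resp.\ \eq{SDQ-Rd-K} from \eq{SDQ-Re-K}). You absorb the truncation terms and the $r^{(n)}_{e}, r^{(n)}_{d}$ terms into $o(n^{-1})$ via \lem{SDQ-t-moment} and \lem{SDQ-moment}. For the tail bound you also need $\lambda^{(n)}_{\inf}$ and $\mu^{(n)}_{\inf}$ bounded away from $0$ for large $n$, which follows from \lem{h-limit} and (f1). Finally, you collapse the Palm probabilities with \eq{SDQ-balance 1}.

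There is one slip in Step 1. The arrival term must be $\dd{E}[\widecheck{T}^{(n)}_{A}]\,\dd{P}^{(n)}_{e}[L^{(n)}(0-)\le q^{(n)}_{x}-1]$, not $\dd{P}^{(n)}_{e}[L^{(n)}(0-)\le q^{(n)}_{x}]$. The reason is that $R^{(n)}_{e}(0-)=0$ at an arrival, so $\Delta_{e}f=\widecheck{T}^{(n)}_{A}\,1(L^{(n)}(0-)+1\le q^{(n)}_{x})$. With the event as you wrote it, \eq{SDQ-balance 1} would cancel it exactly against $\dd{P}^{(n)}_{d}[L^{(n)}(0)\le q^{(n)}_{x}]$. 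The term $-\dd{P}^{(n)}_{e}[L^{(n)}(0-)=q^{(n)}_{x}]$ of $\Delta^{(n)}_{x}$ would then disappear. Your Step 2 already uses the correct $q^{(n)}_{x}-1$ version, so only the Step 1 display needs fixing.

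Your alternative derivation of \eq{BAR-Pn-0} is a genuinely different and cleaner route. The paper obtains it by substituting \eq{SDQ-Dn-j-} into \eq{SDQ-Dn-j+}, so it inherits an $o(n^{-1})$ error. You instead use \eq{SDQ-alpha 1}, \eq{SDQ-alpha 2} and $\alpha^{(n)}_{e}=\alpha^{(n)}_{d}$ to get $\dd{E}[\gamma^{(n)}(L^{(n)})]=0$. Since $\widehat{b}^{(n)}(0)=0$, this is \eq{BAR-Pn-0} with zero error term. Deducing \eq{SDQ-Dn-j-} from \eq{SDQ-Dn-j+} by subtraction then makes \eq{SDQ-Re-K} and \eq{SDQ-Rd-K} unnecessary for this lemma. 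It also shows that the global balance is an exact rate-conservation identity, not merely an asymptotic one.
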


\begin{proof}
For making computations easy, we introduce the following notations.
\begin{align*}
 \sr{E}^{(n)}_{1,x} & = \dd{E}[\lambda^{(n)}(L^{(n)}) r^{(n)}_{e} 1(L^{(n)} \le q^{(n)}_{x})] - \dd{E}[\mu^{(n)}(L^{(n)}) r^{(n)}_{d}1(0<L^{(n)} \le q^{(n)}_{x})] \nonumber\\
 \sr{E}^{(n)}_{2,x} & = \dd{E}[\lambda^{(n)}(L^{(n)}) r^{(n)}_{e} 1(L^{(n)} > q^{(n)}_{x})] - \dd{E}[\mu^{(n)}(L^{(n)}) r^{(n)}_{d} 1(L^{(n)} > q^{(n)}_{x})], \nonumber\\
 \sr{E}^{(n)} & = \dd{E}[\lambda^{(n)}(L^{(n)}) r^{(n)}_{e}] - \dd{E}[\mu^{(n)}(L^{(n)}) r^{(n)}_{d} 1(L^{(n)} > 0)].
\end{align*}
Note that
\begin{align*}
  & \lambda^{(n)}(L^{(n)}) - \mu^{(n)}(L^{(n)}) = \widehat{\lambda}^{(n)}(\widehat{L}^{(n)}) - \widehat{\mu}^{(n)}(\widehat{L}^{(n)}) = n^{-1/2} \widehat{b}^{(n)}(\widehat{L}^{(n)}),
\end{align*}
and, by \lem{SDQ-t-moment}, 
  \begin{align*}
  & \dd{E}[\widecheck{T}^{(n)}_{A}] \dd{P}^{(n)}_{e}[L^{(n)}(0) \le q^{(n)}_{x}] - \dd{E}[\widecheck{T}^{(n)}_{S}] \dd{P}^{(n)}_{d}[L^{(n)}(0) \le q^{(n)}_{x}]\\
  & \quad = \dd{P}^{(n)}_{e}[L^{(n)}(0) \le q^{(n)}_{x}] - \dd{P}^{(n)}_{d}[L^{(n)}(0) \le q^{(n)}_{x}] + o(n^{-1}).
 \end{align*}
Subtracting both sides of \eq{SDQ-Rd-1} from those of \eq{SDQ-Re-1} for $j=1$, and applying the notation $\sr{E}^{(n)}_{1,x}$ and the above formulas, we have
 \begin{align}
  \label{eq:SDQ-Rn-1}
  & \dd{E}[n^{-1/2} \dd{E}[\widehat{b}^{(n)}(\widehat{L}^{(n)}) 1(L^{(n)} \le q^{(n)}_{x})]  + \mu^{(n)}(0) \dd{P}[L^{(n)} = 0] - \sr{E}^{(n)}_{1,x}\nonumber\\
  & \quad = \alpha^{(n)}_{e} \Big(\dd{E}^{(n)}_{e}[\widecheck{R}^{(n)}_{d}(0-) 1(L^{(n)}(0-) = q^{(n)}_{x})] + \dd{E}^{(n)}_{d}[\widecheck{R}^{(n)}_{e}(0) 1(L^{(n)}(0) = q^{(n)}_{x})] \nonumber\\
  & \hspace{11ex} +\dd{P}^{(n)}_{e}[L^{(n)}(0) \le q^{(n)}_{x}] - \dd{P}^{(n)}_{d}[L^{(n)}(0) \le q^{(n)}_{x}] + o(n^{-1}) \Big).  
\end{align}
  This implies \eq{SDQ-Dn-j+}
 because $\sr{E}^{(n)}_{1,x} = o(n^{-1})$ by \eq{SDQ-Re 1} and
 \begin{align*}
  \dd{P}^{(n)}_{e}[L^{(n)}(0) \le q^{(n)}_{x}] - \dd{P}^{(n)}_{d}[L^{(n)}(0) \le q^{(n)}_{x}] = - \dd{P}^{(n)}_{e}[L^{(n)}(0-) = q^{(n)}_{x}]
\end{align*}
by \lem{SDQ-basic 1}. Similarly, subtracting both sides of \eq{SDQ-Rd-K} from those of \eq{SDQ-Re-K} for $k=1$, we have \eq{SDQ-Dn-j-}. Substituting $\Delta^{(n)}_{x}$ of \eq{SDQ-Dn-j-} into \eq{SDQ-Dn-j+}, we have \eq{BAR-Pn-0}.
\end{proof}

Our next task is to expand the jump component $E^{(n)}_{\Delta,[0,x]}(\theta)$ of \eq{SDQ=<En1} using $\Delta^{(n)}_{x}$ for large $n$. For this, we will use the following Taylor expansions.

\begin{lemma}
 \label{lem:SDQ-asymp 1}
 For each fixed ${\theta}$ and $x$, as $n \to \infty$,
 \begin{align}
  \label{eq:SDQ-exp 1}
  & e^{n^{-1/2} \theta  q^{(n)}_{x}} = e^{{\theta} x} (1 + o({\theta} n^{-1/2})), \\
  \label{eq:SDQ-exp 2}
  & e^{n^{-1/2}{\theta} (q^{(n)}_{x}+1)} = e^{{\theta} x} (1 + {\theta}n^{-1/2} + o({\theta} n^{-1/2})).
 \end{align}
\end{lemma}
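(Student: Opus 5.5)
The plan is a direct Taylor expansion. The only model input is the definition \eq{qn-x} of $q^{(n)}_{x}$, and the whole argument reduces to controlling the rounding error. Write
\begin{align*}
 n^{-1/2} q^{(n)}_{x} = x - \delta^{(n)}_{x}, \qquad \delta^{(n)}_{x} \equiv n^{-1/2}\big(n^{1/2}x - [n^{1/2}x]\big) \in [0, n^{-1/2}).
\end{align*}
This gives the exact identities
\begin{align*}
 e^{n^{-1/2}\theta q^{(n)}_{x}} = e^{\theta x} e^{-\theta \delta^{(n)}_{x}}, \qquad e^{n^{-1/2}\theta (q^{(n)}_{x}+1)} = e^{\theta x} e^{-\theta \delta^{(n)}_{x}} e^{\theta n^{-1/2}}.
\end{align*}
Since $\theta$ and $x$ are fixed and $\delta^{(n)}_{x} \to 0$, I would expand $e^{-\theta\delta^{(n)}_{x}} = 1 - \theta \delta^{(n)}_{x} + O(\theta^{2} n^{-1})$ and $e^{\theta n^{-1/2}} = 1 + \theta n^{-1/2} + O(\theta^{2} n^{-1})$. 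The $O(\theta^{2}n^{-1})$ terms are $o(\theta n^{-1/2})$ for fixed $\theta \ne 0$. Multiplying the two expansions then reduces both \eq{SDQ-exp 1} and \eq{SDQ-exp 2} to the single claim $\theta \delta^{(n)}_{x} = o(\theta n^{-1/2})$. Equivalently, the fractional part $\{n^{1/2}x\} = n^{1/2}\delta^{(n)}_{x}$ must tend to $0$.

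The main obstacle is exactly this rounding term. Without further information it is only $O(|\theta| n^{-1/2})$, since $0 \le n^{1/2}\delta^{(n)}_{x} < 1$. I would therefore establish the $o$-statement for those $x$ for which $\{n^{1/2}x\} \to 0$. This covers $x = 0$, and also every $x$ with $n^{1/2}x \in \dd{Z}_{+}$, which includes the levels of the multi-level queue when $\ell^{(n)}_{i}$ is taken integer as in (M4). I would also check that this convention can be imposed on the grid of $x$ used later.

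For general $x$ I would record the uniform bound
\begin{align*}
 |e^{n^{-1/2}\theta q^{(n)}_{x}} - e^{\theta x}| \le |\theta| n^{-1/2} e^{|\theta| (x+1)},
\end{align*}
together with the exact factorization above. In the subsequent use (multiplying by $\alpha^{(n)}_{e}$, dividing by $n^{-1}$), the rounding error then enters only through $\delta^{(n)}_{x}$ times the bounded Palm quantities of \cor{SDQ-Re-Rs 1}. The difference $e^{n^{-1/2}\theta(q^{(n)}_{x}+1)} - e^{n^{-1/2}\theta q^{(n)}_{x}} = e^{n^{-1/2}\theta q^{(n)}_{x}}(\theta n^{-1/2} + o(\theta n^{-1/2}))$, which is what \eq{SDQ=<En1} actually requires, holds exactly without any condition on $x$.
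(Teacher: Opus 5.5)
Your expansion is the natural argument, and you have found a real defect in the statement. The paper gives no proof of its own, only a pointer to Lemma~4.7 of \cite{Miya2025}. But you stop one step short of what your reduction forces. The error in both \eq{SDQ-exp 1} and \eq{SDQ-exp 2} is exactly $-\theta\delta^{(n)}_{x}+O(\theta^{2}n^{-1})$. So each holds (for fixed $\theta\neq0$) \emph{if and only if} the fractional part $n^{1/2}x-[n^{1/2}x]$ tends to $0$.

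For $x>0$ this never happens. The sequence $n^{1/2}x$ increases to $\infty$ with increments $x/(\sqrt{n+1}+\sqrt{n})\to0$, so its fractional parts are dense in $[0,1)$ along every tail. For instance, take $x=1$ and $n=m^{2}+m$. Then $q^{(n)}_{1}=m$ and $n^{1/2}-m\to\frac12$, so $e^{n^{-1/2}\theta q^{(n)}_{1}}=e^{\theta}(1-\frac12\theta n^{-1/2}+o(n^{-1/2}))$. The lemma as stated is therefore false for every $x>0$, and no proof can deliver it.

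Your proposed salvage is correspondingly empty beyond $x=0$:
\begin{itemize}
\item $n^{1/2}x\in\dd{Z}_{+}$ for all large $n$ forces $x=0$.
\item (M4) does not make the levels integers; the paper says explicitly that $\ell^{(n)}_{i}$ need not be. Moreover, $n^{1/2}\ell_{i}\in\dd{Z}$ for $n=1,2$ already forces $\ell_{i}=0$.
\item Along the full sequence $n\to\infty$ there is no nontrivial grid of $x$ on which the convention can be imposed.
\end{itemize}

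Your last paragraph is the correct repair, and it is what the later arguments actually use. It consists of two facts: $e^{n^{-1/2}\theta q^{(n)}_{x}}=e^{\theta x}(1+O(\theta n^{-1/2}))$, uniformly in $x$, since $|e^{-\theta\delta}-1|\le|\theta|n^{-1/2}e^{|\theta|}$; and the exact identity $e^{n^{-1/2}\theta(q^{(n)}_{x}+1)}=e^{n^{-1/2}\theta q^{(n)}_{x}}e^{\theta n^{-1/2}}$. The paper's usage matches this:
\begin{itemize}
\item The second form of \eq{SDQ=<En1} is precisely this factorization.
\item \lem{BAR=<P2} and \lem{asym-BAR1} keep the factor $e^{n^{-1/2}q^{(n)}_{x}\theta}$.
\item That factor is replaced by $e^{\theta x}$ only in \eq{asym-BAR>2} and in the limit relations \eq{BAR2}, \eq{SDQ-BAR3}, \eq{BAR-nu=<x}. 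There an $O(\theta n^{-1/2})$ error times a bounded quantity is absorbed into $\theta o(1)$ or vanishes.
\end{itemize}

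One caution on that paragraph. In the proof of \lem{BAR=<P2}, the factor $e^{\theta n^{-1/2}}-1$ multiplies $\alpha^{(n)}_{e}\dd{P}^{(n)}_{d}[L^{(n)}(0)=q^{(n)}_{x}]$, which in general is only known to be bounded. At the $n^{-1}$ scale of \eq{BAR=<P2} you therefore need its second-order term $\frac12\theta^{2}n^{-1}$ as well, not just $\theta n^{-1/2}+o(\theta n^{-1/2})$. The exact factorization supplies that term directly.
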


\begin{lemma}
 \label{lem:SDQ-asymp 2}
 Let $P_{n}(u) = \sum_{k=1}^{n} u^{k}$ for $u \in \dd{R}$ and integer $n \ge 1${. Then,} for each fixed ${\theta}$, as $n \to \infty$, for $i=1,2$,
 \begin{align}
  \label{eq:SDQ-exp-eta 1}
  e^{-\eta^{(n)}(n^{-1/2}{\theta}) \widecheck{R}^{(n)}_{e}(0)} & = 1 - n^{-1/2}{\theta} \widecheck{R}^{(n)}_{e}(0) \nonumber\\
  & \quad - 2^{-1} (n^{-1/2}{\theta})^{2} [ \sigma_{A}^{2} \widecheck{R}^{(n)}_{e}(0) - (\widecheck{R}^{(n)}_{e}(0))^{2} ] + P_{3}(\widecheck{R}^{(n)}_{e}(0)) O((n^{-1/2}{\theta})^{3}),\\
  \label{eq:SDQ-exp-zeta 1}
  e^{-\zeta^{(n)}(n^{-1/2}{\theta}) \widecheck{R}^{(n)}_{d}(0-)} & = 1 + n^{-1/2}{\theta} \widecheck{R}^{(n)}_{d}(0-) \nonumber\\
  & \quad - 2^{-1} (n^{-1/2}{\theta})^{2} [ \sigma_{S}^{2} \widecheck{R}^{(n)}_{d}(0-) - (\widecheck{R}^{(n)}_{d}(0-))^{2} ] + P_{3}(\widecheck{R}^{(n)}_{d}(0-)) O((n^{-1/2}{\theta})^{3}).
 \end{align}
\end{lemma}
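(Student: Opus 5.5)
We obtain \eq{SDQ-exp-eta 1} from a second-order Taylor expansion of the exponential in the exponent $w = \eta^{(n)}(n^{-1/2}\theta)\widecheck{R}^{(n)}_{e}(0)$, combined with the expansion of $\eta^{(n)}(n^{-1/2}\theta)$ itself. The key feature is that $\widecheck{R}^{(n)}_{e}(0) \le n^{1/2}$ is truncated, while $\eta^{(n)}(n^{-1/2}\theta) = O(n^{-1/2}\theta)$. Hence the products entering the remainder are of the form $(n^{-1/2}\theta)^{k}(\widecheck{R}^{(n)}_{e}(0))^{k}$, and these can be absorbed into $P_{3}(\widecheck{R}^{(n)}_{e}(0))\, O((n^{-1/2}\theta)^{3})$ without any moment condition.

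\textbf{Step 1: third-order expansion of $\eta^{(n)}$.} We strengthen \eq{SDQ-eta1} to
\[
\eta^{(n)}(n^{-1/2}\theta) = n^{-1/2}\theta + \tfrac12 \sigma_{A}^{2} (n^{-1/2}\theta)^{2} + O((n^{-1/2}\theta)^{3}),
\]
with the $O$ uniform in $n$. To get this, we differentiate \eq{SDQ-boundary1} up to three times in $\theta$, as in the proof of \cite[Lemma 5.8]{BravDaiMiya2024}. The derivatives at $0$ are
\[
\eta'(0) = 1/\dd{E}[\widecheck{T}^{(n)}_{A}], \qquad \eta''(0) = \mathrm{Var}(\widecheck{T}^{(n)}_{A})/(\dd{E}[\widecheck{T}^{(n)}_{A}])^{3}.
\]
By \lem{SDQ-t-moment} these equal $1 + o(n^{-1})$ and $\sigma_{A}^{2} + o(n^{-1/2})$. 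The third derivative is bounded uniformly in $n$ near $0$, because $\dd{E}[T_{A}^{3}] < \infty$ controls the third moments of $\widecheck{T}^{(n)}_{A}$.

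There is one subtlety. The correction $o(n^{-1})$ multiplied by $n^{-1/2}\theta\widecheck{R}$ gives a term $o(n^{-3/2})\widecheck{R}$. This is of order $(n^{-1/2})^{3}$ times $\widecheck{R}$ for fixed $\theta$, so it is absorbed into the remainder. In the same way, $(\eta''(0)-\sigma_{A}^{2})\theta^{2}n^{-1}\widecheck{R} = o(n^{-3/2})\widecheck{R}$ is absorbed.

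\textbf{Step 2: expand the exponential.} We write
\[
e^{-w} = 1 - w + \tfrac12 w^{2} + O(|w|^{3}) e^{|w|}.
\]
By \eq{SDQ-eta-zeta1}, $|w| \le |\theta| d_{A}$ is bounded for large $n$, so the factor $e^{|w|}$ is harmless. We then substitute Step 1:
\[
-w = -n^{-1/2}\theta \widecheck{R} - \tfrac12\sigma_{A}^{2}(n^{-1/2}\theta)^{2}\widecheck{R} + \widecheck{R}\, O((n^{-1/2}\theta)^{3}),
\qquad
\tfrac12 w^{2} = \tfrac12 (n^{-1/2}\theta)^{2}\widecheck{R}^{2} + \widecheck{R}^{2} O((n^{-1/2}\theta)^{3}),
\]
and $|w|^{3} \le C\widecheck{R}^{3}(n^{-1/2}|\theta|)^{3}$. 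Collecting terms gives exactly
\[
1 - n^{-1/2}\theta\widecheck{R} - \tfrac12(n^{-1/2}\theta)^{2}\bigl[\sigma_{A}^{2}\widecheck{R} - \widecheck{R}^{2}\bigr],
\]
and all leftovers are bounded by $(\widecheck{R} + \widecheck{R}^{2} + \widecheck{R}^{3})\, O((n^{-1/2}\theta)^{3}) = P_{3}(\widecheck{R})\, O(\cdot)$.

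\textbf{Step 3: the $\zeta^{(n)}$ case.} We handle \eq{SDQ-exp-zeta 1} identically, using \eq{SDQ-zeta1}. The leading sign flips to $-n^{-1/2}\theta$, while the second-order coefficient is $+\tfrac12\sigma_{S}^{2}$, which produces the stated form.

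\textbf{Main obstacle.} The hard part is Step 1: establishing the $O((n^{-1/2}\theta)^{3})$ remainder uniformly in $n$, rather than the $o(\cdot)$ of \lem{SDQ-eta-zeta1}. This needs uniform bounds on the third derivative of the implicit function. To obtain them, we would differentiate \eq{SDQ-boundary1} implicitly and use that the Laplace transform derivatives of $\widecheck{T}^{(n)}_{A}$ up to order three are bounded, uniformly in $n$, on a neighborhood $|\eta| < a'$. The same neighborhood argument gives the constant $a$ of \lem{SDQ-eta-zeta1}.
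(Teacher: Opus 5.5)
Your route is the paper's. The paper only says the lemma is obtained, as Lemma 4.8 of \cite{Miya2025}, by Taylor-expanding the exponentials, and that is your Steps 2--3. You are also right that the $o(n^{-1}\theta^{2})$ remainder of \lem{SDQ-eta-zeta1} is too weak on its own, because it gets multiplied by $\widecheck{R}^{(n)}_{e}(0)$, which can be as large as $n^{1/2}$. A third-order expansion of $\eta^{(n)}$ and $\zeta^{(n)}$ based on the finite third moments is the right ingredient, and your handling of the $o(n^{-3/2})\widecheck{R}$ corrections is correct.

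One justification fails as you state it. The derivatives of $s \mapsto \dd{E}[e^{-s\widecheck{T}^{(n)}_{A}}]$ up to order three are \emph{not} bounded uniformly in $n$ on a fixed neighborhood $|s| < a'$.
\begin{itemize}
\item For $s = -a'/2$ one has $\dd{E}[(\widecheck{T}^{(n)}_{A})^{3} e^{a'\widecheck{T}^{(n)}_{A}/2}] \ge n^{3/2} e^{a' n^{1/2}/2}\, \dd{P}[T_{A} > n^{1/2}]$.
\item This diverges if, say, $\dd{P}[T_{A} > x] \sim x^{-4}$, which gives a finite third moment but no exponential moment.
\item Negative arguments are exactly what occurs for $\eta^{(n)}(n^{-1/2}\theta)$ when $\theta < 0$, and for $\zeta^{(n)}$ when $\theta > 0$. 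So the problem sits in the case $\theta \le 0$ that the paper actually uses.
\end{itemize}

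The fix is to bound the third derivative only on the segment Taylor's theorem needs.
\begin{itemize}
\item For $t$ between $0$ and $n^{-1/2}\theta$, \eq{SDQ-eta-zeta1} gives $|\eta^{(n)}(t)| \le d_{A} n^{-1/2}|\theta|$ for large $n$. Hence $e^{-\eta^{(n)}(t)\widecheck{T}^{(n)}_{A}}$ lies in $[e^{-d_{A}|\theta|}, e^{d_{A}|\theta|}]$.
\item With $\psi_{n}(s) = \log \dd{E}[e^{-s\widecheck{T}^{(n)}_{A}}]$, this bounds $|\psi_{n}''|$ and $|\psi_{n}'''|$ at $s = \eta^{(n)}(t)$ by constants depending only on $\theta$ and $\dd{E}[T_{A}^{k}]$, $k \le 3$.
\item It also gives $|\psi_{n}'| \ge e^{-2d_{A}|\theta|}\dd{E}[T_{A} \wedge 1] > 0$.
\item Differentiating $\psi_{n}(\eta^{(n)}(t)) = -t$ shows that $(\eta^{(n)})'''$ is a polynomial in $\psi_{n}''$, $\psi_{n}'''$ and $1/\psi_{n}'$. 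It is therefore bounded on that segment by some $C(\theta)$ for all large $n$, which is enough because $\theta$ is fixed.
\item The same argument works for $\zeta^{(n)}$, using $d_{S}$ and $T_{S}$.
\end{itemize}

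Drop your closing remark that this neighborhood argument also yields the constant $a$ of \lem{SDQ-eta-zeta1}; it fails for the same reason, and you only need that lemma as given.
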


\lem{SDQ-asymp 1} is essentially the same as Lemma 4.7 for $r = n^{-1/2}$ of \cite{Miya2025}, and \lem{SDQ-asymp 2} is obtained similarly to Lemma 4.8 by expanding those exponential functions up to the order $n^{-1/2}{\theta}$. From \eq{SDQ=<En1} and \eq{SDQ>En1} for $E^{(n)}_{\Delta,[0,x]}(\theta)$, we have the following lemma.

\begin{lemma}[Discontinuous part of the asymptotic BAR]
 \label{lem:BAR=<P2}
 For $x \in \dd{R}_{+}$ and each $\theta \le 0$, as $n \to \infty$,
\begin{align}
\label{eq:BAR=<P2}
  & \alpha^{(n)}_{e} E^{(n)}_{\Delta,[0,x]}(\theta) \nonumber\\
  & \quad = n^{-1} \theta  \left(e^{n^{-1/2} q^{(n)}_{x} {\theta}} \big\{\dd{E}[\widehat{b}^{(n)}(\widehat{L}^{(n)}) 1(\widehat{L}^{(n)} > n^{-1/2} q^{(n)}_{x})] + \theta H^{(n)}_{x}\big\} + \theta o(1)\right),\\
\label{eq:BAR>P2}
  & \alpha^{(n)}_{e} E^{(n)}_{\Delta,(x,\infty)}(\theta) = - \alpha^{(n)}_{e} E^{(n)}_{\Delta,[0,x]}(\theta),
\end{align}
 where recall that $H^{(n)}_{x}$ is defined by \eq{Hn-x}.
\end{lemma}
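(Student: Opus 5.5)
The second identity \eq{BAR>P2} is already contained in \eq{SDQ>En1}: multiply it by $\alpha^{(n)}_{e}$. So the work is \eq{BAR=<P2}. The plan is to start from the second expression for $E^{(n)}_{\Delta,[0,x]}(\theta)$ in \eq{SDQ=<En1}. Insert the expansions of \lem{SDQ-asymp 2} into the two Palm expectations, multiply by $\alpha^{(n)}_{e}$, and collect terms up to order $n^{-1}\theta^{2}$. The cubic remainders $P_{3}(\cdot)\,O((n^{-1/2}\theta)^{3})$ are $\theta\, o(n^{-1})$ for each fixed $\theta$, because the Palm moments of $\widecheck{R}^{(n)}_{d}(0-)$ and $\widecheck{R}^{(n)}_{e}(0)$ up to order $3$ on $\{L^{(n)}=q^{(n)}_{x}\}$ are bounded uniformly in $n$ by \cor{SDQ-Re-Rs 1}. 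The same corollary, together with \lem{alpha 1}, keeps all the quadratic-order coefficients bounded.

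For the first term of \eq{SDQ=<En1}, write $e^{n^{-1/2}\theta}-1 = n^{-1/2}\theta + \tfrac12 n^{-1}\theta^{2} + o(n^{-1})$ and multiply it by $\dd{E}^{(n)}_{d}[(1 - n^{-1/2}\theta \widecheck{R}^{(n)}_{e}(0)+\cdots)1(L^{(n)}(0)=q^{(n)}_{x})]$. The leading part is $n^{-1/2}\theta\, \dd{P}^{(n)}_{d}[L^{(n)}(0)=q^{(n)}_{x}]$, which equals $n^{-1/2}\theta\,\dd{P}^{(n)}_{e}[L^{(n)}(0-)=q^{(n)}_{x}]$ by \eq{SDQ-balance 1}. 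The $n^{-1}\theta^{2}$ corrections are $\tfrac12 \dd{P}^{(n)}_{d}[\cdot] - \dd{E}^{(n)}_{d}[\widecheck{R}^{(n)}_{e}(0)1(\cdot)]$.

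For the bracketed difference in \eq{SDQ=<En1}, the constant terms cancel, again by \eq{SDQ-balance 1}. The first-order terms give $n^{-1/2}\theta\,(\dd{E}^{(n)}_{e}[\widecheck{R}^{(n)}_{d}(0-)1(\cdot)] + \dd{E}^{(n)}_{d}[\widecheck{R}^{(n)}_{e}(0)1(\cdot)])$. Combining this with the leading part of the first term, the total first-order contribution is exactly $-\,n^{-1/2}\theta\,\Delta^{(n)}_{x}/\alpha^{(n)}_{e}$, with $\Delta^{(n)}_{x}$ from \eq{SDQ-Dn-i}. By \eq{SDQ-Dn-j-} of \lem{SDQ-R2-1}, $-\Delta^{(n)}_{x} = n^{-1/2}\dd{E}[\widehat{b}^{(n)}(\widehat{L}^{(n)})1(\widehat{L}^{(n)}>n^{-1/2}q^{(n)}_{x})] + o(n^{-1})$. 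Hence this part becomes $n^{-1}\theta\, e^{n^{-1/2}q^{(n)}_{x}\theta}\,\dd{E}[\widehat{b}^{(n)}(\cdot)1(\cdot)] + \theta\, o(n^{-1})$. The prefactor $e^{n^{-1/2}\theta q^{(n)}_{x}}$ stays bounded for $\theta\le 0$.

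The second-order terms must then be shown to assemble into $n^{-1}\theta^{2}e^{n^{-1/2}q^{(n)}_{x}\theta}H^{(n)}_{x}$. From the $e$-side, via \eq{SDQ-exp-zeta 1}, they give $+\tfrac12(\sigma_{S}^{2}\widecheck{R}^{(n)}_{d}(0-) - (\widecheck{R}^{(n)}_{d}(0-))^{2})$. From the $d$-side they give $-\tfrac12(\sigma_{A}^{2}\widecheck{R}^{(n)}_{e}(0) - (\widecheck{R}^{(n)}_{e}(0))^{2})$. The cross term from the first part contributes $-\widecheck{R}^{(n)}_{e}(0)$, which turns $\sigma_{A}^{2}$ into $\sigma_{A}^{2}+2$. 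There remains a leftover $\tfrac12 n^{-1}\theta^{2}\dd{P}^{(n)}_{d}[L^{(n)}(0)=q^{(n)}_{x}]$. I would show it is $o(n^{-1})$ after multiplication by $\alpha^{(n)}_{e}$, or absorb it. Since $\theta^{2}\le$ bounded multiples of $\theta$ in the $\theta o(1)$ sense only for fixed $\theta$, the honest route is to prove $\dd{P}^{(n)}_{e}[L^{(n)}(0-)=q^{(n)}_{x}]\to 0$. For this I would use \eq{SDQ-Dn-j+}, \eq{SDQ-Dn-j-} and \eq{SDQ-Rd-1}. The claim is plausible because a single lattice point of the unscaled chain carries Palm mass $O(n^{-1/2})$, but this needs an argument. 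A fallback is to note that $\Delta^{(n)}_{x}$ already contains this probability, and to rewrite the grouping so that it is absorbed there.

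The main obstacle is this bookkeeping at order $n^{-1}\theta^{2}$: matching exactly the constant $\sigma_{A}^{2}+2$ in \eq{Hn-x}, and controlling the lone Palm-probability term. Uniformity in $x$ over compacts comes from the uniformity in \cor{SDQ-Re-Rs 1} and \lem{SDQ-asymp 1}.
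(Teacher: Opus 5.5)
Your route is the paper's. You start from \eq{SDQ=<En1}, insert \lem{SDQ-asymp 2}, collect the first-order terms into $-n^{-1/2}\theta\Delta^{(n)}_{x}$ and replace $\Delta^{(n)}_{x}$ using \eq{SDQ-Dn-j-}. You then bound the cubic remainders with \cor{SDQ-Re-Rs 1} and read off \eq{BAR>P2} from \eq{SDQ>En1}. Your second-order bookkeeping is right, including the cross term that turns $\sigma_{A}^{2}$ into $\sigma_{A}^{2}+2$. The leftover term you found is also genuine. Set $h=n^{-1/2}\theta$ and $p_{n}=\dd{P}^{(n)}_{e}[L^{(n)}(0-)=q^{(n)}_{x}]$. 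The exact expansion is
\[
\alpha^{(n)}_{e}E^{(n)}_{\Delta,[0,x]}(\theta)=e^{hq^{(n)}_{x}}\bigl\{-h\Delta^{(n)}_{x}+h^{2}H^{(n)}_{x}+\tfrac12 h^{2}\alpha^{(n)}_{e}p_{n}\bigr\}+O(h^{3}).
\]
The paper's proof loses the last term. It writes $e^{n^{-1/2}\theta}-1=n^{-1/2}\theta(1+o(1))$ and keeps that $o(1)$ only on the $\widecheck{R}^{(n)}_{e}(0)$ cross term. But this $o(1)$ equals $\frac12 n^{-1/2}\theta+O(n^{-1})$, so the discarded piece is $\frac12 n^{-1}\theta^{2}\alpha^{(n)}_{e}p_{n}$.

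Still, your proposal does not prove the stated lemma, because it leaves exactly this term open. For fixed $\theta<0$ and with $H^{(n)}_{x}$ as in \eq{Hn-x}, the statement is equivalent to $p_{n}\to0$, and the identities you propose cannot give that.
\begin{itemize}
\item Since $\widehat{b}^{(n)}$ is bounded, \eq{SDQ-Dn-j+} and \eq{SDQ-Dn-j-} only say $\Delta^{(n)}_{x}=O(n^{-1/2})$. That is, $\alpha^{(n)}_{e}p_{n}$ equals $\alpha^{(n)}_{e}\bigl(\dd{E}^{(n)}_{e}[\widecheck{R}^{(n)}_{d}(0-)1(L^{(n)}(0-)=q^{(n)}_{x})]+\dd{E}^{(n)}_{d}[\widecheck{R}^{(n)}_{e}(0)1(L^{(n)}(0)=q^{(n)}_{x})]\bigr)+O(n^{-1/2})$. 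This merely trades the Palm probability for Palm residual moments on the same level.
\item Differencing \eq{SDQ-Rd-1} and \eq{SDQ-Re-1} in the level gives single-level identities linking $\dd{P}[L^{(n)}=q^{(n)}_{x}]$ to Palm quantities at neighbouring levels. None of these is a priori small, so the argument is circular.
\end{itemize}

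The missing ingredient is an independent local estimate that no single level retains stationary mass: $\dd{P}[L^{(n)}\in\{q^{(n)}_{x},q^{(n)}_{x}+1\}]\to0$. Given it, apply \eq{BAR-RdRe} at the levels $q^{(n)}_{x}$ and $q^{(n)}_{x}+1$, together with Cauchy--Schwarz and the second-moment bounds of \lem{SDQ-moment}. This kills both residual terms, hence $p_{n}\to0$. The estimate follows from \eq{Lqx-delta} when the vague limit has a density near $x$. However, the lemma is stated, and used for \thr{tight-2}, without that assumption.

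Your fallback does not make the term disappear either. Substituting the relation above only moves it into the $\theta^{2}$ coefficient, replacing $\sigma_{S}^{2}$ and $\sigma_{A}^{2}+2$ in \eq{Hn-x} by $\sigma_{S}^{2}+1$ and $\sigma_{A}^{2}+1$. That modified $H^{(n)}_{x}$ would serve all later uses equally well, but it proves a different formula from the one stated.
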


\begin{proof}
 We evaluate $E^{(n)}_{\Delta,[0,x]}(\theta)$ through \eq{SDQ=<En1}. For this, we note that, by \eq{SDQ-exp-eta 1} and \eq{SDQ-exp-zeta 1},
 \begin{align*}
  & \dd{E}^{(n)}_{d}[e^{-\eta^{(n)}(n^{-1/2}{\theta}) \widecheck{R}^{(n)}_{e}(0)} 1(L^{(n)}(0) = q^{(n)}_{x})] \nonumber\\
  & \quad = \dd{E}^{(n)}_{d}[(1 - n^{-1/2}{\theta} \widecheck{R}^{(n)}_{e}(0) - (2n)^{-1} \theta^{2} (\sigma_{A}^{2} \widecheck{R}^{(n)}_{e}(0) - (\widecheck{R}^{(n)}_{e}(0))^{2} + o(1)))1(L^{(n)}(0) = q^{(n)}_{x})], \nonumber\\
  & \dd{E}^{(n)}_{e}[e^{-\zeta^{(n)}(n^{-1/2}{\theta}) \widecheck{R}^{(n)}_{d}(0-)} 1(L^{(n)}(0-) = q^{(n)}_{x})] \nonumber\\
  & \quad = \dd{E}^{(n)}_{e}[(1 + n^{-1/2}{\theta} \widecheck{R}^{(n)}_{d}(0-) - (2n)^{-1} \theta^{2} (\sigma_{S}^{2} \widecheck{R}^{(n)}_{d}(0-) - (\widecheck{R}^{(n)}_{d}(0-))^{2} + o(1)))1(L^{(n)}(0-) = q^{(n)}_{x})].
 \end{align*}
 Substituting these formulas into \eq{SDQ=<En1} and recalling the definition of $\Delta^{(n)}_{x}$, we have
 \begin{align*}
  & E^{(n)}_{\Delta,[0,x]}(\theta) = e^{n^{-1/2}{\theta} q^{(n)}_{x}} n^{-1/2} \theta (1 + o(1)) \big\{\dd{E}^{(n)}_{d}[(1 - n^{-1/2}{\theta} \widecheck{R}^{(n)}_{e}(0)) 1(L^{(n)}(0) = q^{(n)}_{x})]  \nonumber\\
  & \quad \hspace{22ex} - 2^{-1} n^{-1} \theta^{2} \dd{E}^{(n)}_{d}[(\sigma_{A}^{2} \widecheck{R}^{(n)}_{e}(0) - (\widecheck{R}^{(n)}_{e}(0))^{2} + o(1))1(L^{(n)}(0) = q^{(n)}_{x})] \big\} \nonumber\\
  & \quad - e^{n^{-1/2}{\theta} q^{(n)}_{x}} n^{-1/2}{\theta} \big\{\dd{E}^{(n)}_{e}[\widecheck{R}^{(n)}_{d}(0-) 1(L^{(n)}(0-) = q^{(n)}_{x})] + \dd{E}^{(n)}_{d}[\widecheck{R}^{(n)}_{e}(0) 1(L^{(n)}(0) = q^{(n)}_{x})] \big\} \nonumber\\
  & \quad + e^{n^{-1/2}{\theta} q^{(n)}_{x}} 2^{-1} n^{-1} \theta^{2} \big\{\dd{E}^{(n)}_{e}[(\sigma_{S}^{2} \widecheck{R}^{(n)}_{d}(0-) - (\widecheck{R}^{(n)}_{d}(0-))^{2} + o(1)) 1(L^{(n)}(0-) = q^{(n)}_{x})] \nonumber\\
  & \hspace{24ex} - \dd{E}^{(n)}_{d}[(\sigma_{A}^{2} \widecheck{R}^{(n)}_{e}(0) - (\widecheck{R}^{(n)}_{e}(0))^{2} + o(1)) 1(L^{(n)}(0) = q^{(n)}_{x})] \big\}.
\end{align*}
Then, using the definition \eq{SDQ-Dn-i} of $\Delta^{(n)}_{x}$, this equation yields
\begin{align*}
  \alpha^{(n)}_{e} E^{(n)}_{\Delta,[0,x]}(\theta) & = e^{n^{-1/2}{\theta} q^{(n)}_{x}} \big\{ - n^{-1/2} \theta \Delta^{(n)}_{x} - n^{-1} \theta^{2} (1 + o(1)) \alpha^{(n)}_{e} \dd{E}^{(n)}_{d}[ \widecheck{R}^{(n)}_{e}(0) 1(L^{(n)}(0) = q^{(n)}_{x})]  \nonumber\\
  & \quad- 2^{-1} n^{-3/2} \theta^{3} \alpha^{(n)}_{e} \dd{E}^{(n)}_{d}[(\sigma_{A}^{2} \widecheck{R}^{(n)}_{e}(0) - (\widecheck{R}^{(n)}_{e}(0))^{2} + o(1))1(L^{(n)}(0) = q^{(n)}_{x})] \nonumber\\
  & \quad+ 2^{-1} n^{-1} \theta^{2} \alpha^{(n)}_{e} \dd{E}^{(n)}_{e}[(\sigma_{S}^{2} \widecheck{R}^{(n)}_{d}(0-) - (\widecheck{R}^{(n)}_{d}(0-))^{2} + o(1)) 1(L^{(n)}(0-) = q^{(n)}_{x})] \nonumber\\
  & \quad - 2^{-1} n^{-1} \theta^{2} \alpha^{(n)}_{e} \dd{E}^{(n)}_{d}[(\sigma_{A}^{2} \widecheck{R}^{(n)}_{e}(0) - (\widecheck{R}^{(n)}_{e}(0))^{2} + o(1)) 1(L^{(n)}(0) = q^{(n)}_{x})] \big\}.
\end{align*}
Hence, by the definition \eq{Hn-x} of $H^{(n)}_{x}$, we arrive at
\begin{align*}
  \alpha^{(n)}_{e} E^{(n)}_{\Delta,[0,x]}(\theta) & = e^{n^{-1/2}{\theta} q^{(n)}_{x}} \big\{ - n^{-1/2} \theta \Delta^{(n)}_{x} - \theta^{2} o(n^{-1}) \alpha^{(n)}_{e} \dd{E}^{(n)}_{d}[ \widecheck{R}^{(n)}_{e}(0) 1(L^{(n)}(0) = q^{(n)}_{x})]  \nonumber\\
  & \quad + n^{-1} \theta^{2} H^{(n)}_{x} \nonumber\\
  & \quad - 2^{-1} n^{-3/2} \theta^{3} \alpha^{(n)}_{e} \dd{E}^{(n)}_{d}[(\sigma_{A}^{2} \widecheck{R}^{(n)}_{e}(0) - (\widecheck{R}^{(n)}_{e}(0))^{2} + o(1))1(L^{(n)}(0) = q^{(n)}_{x})] \big\}.
 \end{align*}
Thus we have \eq{BAR=<P2} by \eq{SDQ-Dn-j-} for $\Delta^{(n)}_{x}$ and \eq{wRne} in \cor{SDQ-Re-Rs 1}. Finally, \eq{BAR>P2} is immediate from \eq{SDQ>En1}.
\end{proof}

\section{Proofs of the main results}
\label{sec:p-main}
\setnewcounter

In this section, we prove Theorems \thrt{tight-2}, \thrt{SDQ-main-1} and \thrt{MLQ-main-1}.

\subsection{Proof of \thr{tight-2}}
\label{sec:p-tight-2}

Recall that $\sr{S}_{sd} = \{\nu^{(n)}; n \ge 1\}$. By \lem{Helly}, for any subsequence of $\sr{S}_{sd}$, we can find a further subsequence $\{\nu^{(n_{k})}; k \ge 1\}$ and a sub-probability measure {$\nu$} on $(\dd{R}_{+},\sr{B})$ such that $\nu^{{(n_{k})}} \os{v}{\rightarrow} \nu$ as $k \to \infty$. We show that $\nu$ is a probability measure if $b_{\infty} < 0$. This proves \thr{tight-2} by \lem{tight-1}.

From the condition (\sect{preliminaries}.g), $b(u)$ converges to $b_{\infty} \in \dd{R}$ as $u \to \infty$. Hence, for any $\varepsilon > 0$, there {exists} $x_{0} \ge 1$ such that
\begin{align*}
 |b(u) - b_{\infty}| < \varepsilon, \qquad \forall u \ge x_{0}.
\end{align*}
For this $\varepsilon$, by {the condition} (\sect{preliminaries}.f), we can choose $k_{0} \ge 1$ such that
\begin{align*}
 \sup_{u \ge x_{0}}|\widehat{b}^{(n_{k})}(u) - b(u)| < \varepsilon, \qquad \forall k \ge k_{0}.
\end{align*}
Hence, we have
\begin{align}
 \label{eq:b0}
 \sup_{u \ge x_{0}}|\widehat{b}^{(n_{k})}(u) - b_{\infty}| < 2\varepsilon, \qquad \forall k \ge k_{0}.
\end{align}
On the other hand, by the condition (\sect{preliminaries}.a) and \lem{h-limit}, there is a $\sigma_{\infty}$ such that
\begin{align}
 \label{eq:s0}
 \sigma_{\infty} = \sup_{k \ge 1} \sup_{u \in \dd{R}_{+}} \sigma^{(n_{k})}(u) < \infty.
\end{align}

We now work on the asymptotic BAR in \lem{asym-BAR1} for $n=n_{k}$. By \eq{asym-BAR>1}, we have
\begin{align*}
 & \dd{E}\left[ \left(\widehat{b}^{(n_{k})}(\widehat{L}^{(n_{k})}) + \frac 12 {\theta} (\widehat{\sigma}^{(n_{k})})^{2}(\widehat{L}^{(n_{k})}) \right) e^{\theta \widehat{L}^{(n_{k})}} 1(\widehat{L}^{(n_{k})} > x) \right] \nonumber\\
 & \qquad = e^{\theta x} \dd{E}[\widehat{b}^{(n_{k})}(\widehat{L}^{(n_{k})}) 1(\widehat{L}^{(n_{k})} > x)] + \theta e^{\theta n_{k}^{-1/2} q^{(n_{k})}_{x}} H^{(n_{k})}_{x} + \theta o(1), \qquad \theta < 0, \ x \in \dd{R}_{+},
\end{align*}
or equivalently,
\begin{align}
\label{eq:asym-BAR>2}
 & \int_{x}^{\infty} \left(\widehat{b}^{(n_{k})}(u) + \frac 12 {\theta} (\widehat{\sigma}^{(n_{k})})^{2}(u) \right) e^{\theta u} d \nu^{(n_{k})}(u)  \nonumber\\
 & \quad = e^{\theta x} \int_{x}^{\infty} \widehat{b}^{(n_{k})}(u) d\nu^{(n_{k})}(u) + \theta e^{\theta n_{k}^{-1/2} q^{(n_{k})}_{x}} H^{(n_{k})}_{x} + \theta o(1), \qquad \theta < 0, \ x \in \dd{R}_{+}.
\end{align}
Letting $j \to \infty$ in this equation, by $\nu^{(n_{k})} \os{v}{\to} \nu$ and \lem{limit-hbn}, we have, for $\theta < 0$,
\begin{align}
\label{eq:BAR2}
 & \int_{x}^{\infty} \left(b(u) + \frac 12 {\theta} \sigma^{2}(u) \right) e^{\theta u} d \nu(u) = e^{\theta x} \int_{x}^{\infty} b(u) d\nu^{(n_{k})}(u) + \theta e^{\theta x} \lim_{k \to \infty} H^{(n_{k})}_{x}, \; x \in \dd{R}_{+}.
\end{align}
Hence, $\lim_{k \to \infty} H^{(n_{k})}_{x}$ exists and is finite. We denote this limit by $H_{x}$.

Applying \eq{b0} and \eq{s0} to \eq{BAR2} for $x=x_{0}$, we have, for $\theta < 0$ and $k \ge k_{0}$,
\begin{align}
\label{eq:asym-BAR4}
 & \left(b_{\infty} - 2\varepsilon + \frac 12 {\theta} \sigma_{\infty}^{2}\right) \int_{x_{0}}^{\infty} e^{\theta u} d \nu^{(n_{k})}(u) \le e^{\theta x_{0}} (b_{\infty} + 2\varepsilon) \int_{x_{0}}^{\infty} d\nu^{(n_{k})}(u) + \theta e^{\theta n_{k}^{-1/2} q^{(n_{k})}_{x_{0}}} H^{(n_{k})}_{x_{0}} + \theta o(1) \nonumber\\
 & \quad = e^{\theta x_{0}} (b_{\infty} + 2\varepsilon) ( 1 - \nu^{(n_{k})}([0,x_{0}])) + \theta e^{\theta n_{k}^{-1/2} q^{(n_{k})}_{x_{0}}} H^{(n_{k})}_{x_{0}} + \theta o(1),
\end{align}
because $\nu^{(n_{k})}(\dd{R}_{+}) = 1$. Since $b_{\infty} - 2\varepsilon + \frac 12 {\theta} \sigma_{\infty}^{2} \le 0$ for $\theta < 0$ and
\begin{align*}
 \int_{x_{0}}^{\infty} e^{\theta u} d \nu^{(n_{k})}(u) \le \int_{x_{0}}^{x} e^{\theta u} d \nu^{(n_{k})}(u) + e^{\theta x} \le \nu^{(n_{k})}((x_{0},x]) + e^{\theta x}, \qquad x \ge x_{0}, \ \theta < 0,
\end{align*}
letting $j \to \infty$ in \eq{asym-BAR4} yields, for any $x \ge x_{0}$,
\begin{align}
\label{eq:asym-BAR5}
 & \left(b_{\infty} - 2\varepsilon + \frac 12 {\theta} \sigma_{\infty}^{2}\right) \left(\nu((x_{0},x]) + e^{\theta x}\right) \le e^{\theta x_{0}} (b_{\infty} + 2 \varepsilon) ( 1 - \nu([0,x_{0}])) + \theta e^{\theta x_{0}} H_{x_{0}}.
\end{align}
For each fixed $\theta < 0$, \eq{asym-BAR5} for $x \to \infty$ yields
\begin{align*}
 \left(b_{\infty} - 2\varepsilon + \frac 12 {\theta} \sigma_{\infty}^{2}\right) \nu((x_{0},\infty)) \le e^{\theta x_{0}} (b_{\infty} + 2\varepsilon) ( 1 - \nu([0,x_{0}])) + \theta e^{\theta x_{0}} H_{x_{0}}.
\end{align*}
Then, letting $\theta \uparrow 0$ and letting $\varepsilon \downarrow 0$, we {arrive} at
\begin{align*}
 b_{\infty} \nu((x_{0},\infty)) \le b_{\infty} ( 1 - \nu([0,x_{0}])).
\end{align*}
Hence, if $b_{\infty} < 0$, then this implies that $\nu(\dd{R}_{+}) \ge 1$. Since $\nu$ is a sub-probability measure, $\nu(\dd{R}_{+}) = 1$, and therefore $\sr{S}_{sd}$ is tight by \lem{tight-1}.

\subsection{Proof of \thr{SDQ-main-1}}
\label{sec:p-SDQ-main-1}

To prove Theorem \thrt{SDQ-main-1}, we refines the asymptotic BAR \eq{asym-BAR=<1} for any vaguely convergent subsequence. This will be done in two ways. For this, we use the following condition.
\begin{mylist}{3}
\item [(\sect{p-main}.a)] If $\{\nu^{(n_{k})}; k \ge 1\}$ is a vaguely convergent subsequence of $\sr{S}_{sd}$, then
 \begin{align}
  \label{eq:Hn-x01}
  & \lim_{k \to \infty} H^{(n_{k})}_{x} = 0, \qquad x \ge 0.
 \end{align}
\end{mylist}
Note that, if \eq{Hn-x01} holds, then \eq{asym-BAR=<1} for the subsequence $\{\nu_{n_{k}}; k \ge 1\}$ is simplified to
\begin{align}
 \label{eq:asym-BAR=<2}
 & \frac 12 \dd{E}\left[ (\widehat{\sigma}^{(n_{k})})^{2}(\widehat{L}^{(n_{k})}) \left(\widehat{\beta}^{(n_{k})}(\widehat{L}^{(n_{k})}) + {\theta} \right) e^{\theta \widehat{L}^{(n_{k})}} 1(\widehat{L}^{(n_{k})} \le x) \right] + \mu^{(n_{k})}(0) n^{1/2} \dd{P}[\widehat{L}^{(n_{k})} = 0] \nonumber\\
 & \quad + e^{n^{-1/2} q^{(n_{k})}_{x} {\theta}} \dd{E}[\widehat{b}^{(n_{k})}(\widehat{L}^{(n_{k})}) 1(\widehat{L}^{(n_{k})} > x)] + \theta o(1) = 0, \quad x \in \dd{R}_{+},\ \theta \le 0,\ n \to \infty,
\end{align}

\begin{lemma}\rm
\label{lem:Ln-x01}
Assume the conditions (\sect{preliminaries}.a)--(\sect{preliminaries}.f) for the state-dependent queue. For a vaguely convergent subsequence $\{\nu^{(n_{k})}; k \ge 1\}$ of $\sr{S}_{sd}$, denote its vague limit by $\nu$. Then, For $x \in \dd{R}_{+}$, $\nu$ has a density in a neighborhood of $x$ if and only if \eq{Hn-x01} holds in a neighborhood of $x$.
\end{lemma}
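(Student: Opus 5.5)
The plan is to turn the asymptotic BAR \eq{asym-BAR>1} into an exact identity for the limit $H_{x} = \lim_{k} H^{(n_{k})}_{x}$. Its existence for every $x$ was already shown in the proof of \thr{tight-2}: see \eq{BAR2}, which uses only (\sect{preliminaries}.a)--(\sect{preliminaries}.f). Write $B(x) = \int_{(x,\infty)} b\, d\nu$. Passing to the limit along $n_{k}$ in \eq{asym-BAR>1}, by \lem{limit-hbn}, gives for every $\theta<0$
\begin{align*}
\int_{(x,\infty)} \Big(b(u) + \tfrac 12 \theta \sigma^{2}(u)\Big) e^{\theta u} d\nu(u) = e^{\theta x} B(x) + \theta e^{\theta x} H_{x} .
\end{align*}
Subtracting this identity at $y>x$ from the one at $x$ gives an identity on $(x,y]$. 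Both sides are analytic in $\theta$ near $0^{-}$ (finite measure, bounded $b,\sigma$), so I compare the coefficients of $\theta^{0}$ and $\theta^{1}$. The $\theta^{0}$ terms give only $\int_{(x,y]} b\,d\nu = B(x)-B(y)$. The $\theta^{1}$ terms give the key relation
\begin{align*}
H_{x} - H_{y} = \frac 12 \int_{(x,y]} \sigma^{2}(u)\, d\nu(u) + \int_{(x,y]} (u-x) b(u)\, d\nu(u) - (y-x) B(y).
\end{align*}
Hence $H$ is of bounded variation. Moreover, since $\sigma^{2}$ is bounded above and away from $0$ and $b$ is bounded, $|H_{x}-H_{y}|$ and $\nu((x,y])$ control each other up to an $O(y-x)$ error.

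\emph{($\Leftarrow$)} Suppose $H\equiv 0$ on a neighborhood $(x-\delta,x+\delta)$. Then for $x-\delta<s<t<x+\delta$ the relation gives
\begin{align*}
\frac 12 \sigma_{\min}^{2}\, \nu((s,t]) \le \|b\|_{\infty}(t-s)\big(\nu((s,t]) + 1\big),
\end{align*}
so $\nu((s,t]) \le C(t-s)$. Thus $\nu$ is Lipschitz on that neighborhood, hence absolutely continuous there with a bounded density.

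\emph{($\Rightarrow$)} Suppose $\nu$ has a density near $x$. Here I must use the prelimit structure of $H^{(n)}_{x}$ in \eq{Hn-x}, because the limiting relation alone only shows that $H$ is continuous, not that it vanishes. The terms of $H^{(n)}_{x}$ are Palm expectations of powers $j=1,2$ of $\widecheck{R}$ restricted to the single level $\{L = q^{(n)}_{x}\}$. By H\"older's inequality with exponent $3/j$ and the uniform third-moment bound of \cor{SDQ-Re-Rs 1} (case $j=3$),
\begin{align*}
\dd{E}^{(n)}_{e}\big[(\widecheck{R}^{(n)}_{d}(0-))^{j} 1(L^{(n)}(0-)=q^{(n)}_{x})\big] \le C\, \dd{P}^{(n)}_{e}[L^{(n)}(0-)=q^{(n)}_{x}]^{1-j/3},
\end{align*}
and similarly under $\dd{P}^{(n)}_{d}$, which by \eq{SDQ-balance 1} involves the same probability. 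So it suffices to show $\alpha^{(n)}_{e}\dd{P}^{(n)}_{e}[L^{(n)}(0-)=q^{(n)}_{x}] \to 0$ uniformly near $x$.

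This last step is the main obstacle, since it converts a Palm point mass into stationary mass. I would first apply \eq{SDQ-Re-1} with $j=1$ at the levels $q$ and $q-1$ and subtract. This expresses $\alpha^{(n)}_{e}\dd{P}^{(n)}_{e}[L^{(n)}(0-)=q]$ as $\dd{E}[\lambda^{(n)}(L^{(n)})(1-r^{(n)}_{e})1(L^{(n)}=q)]$, which is at most $\lambda_{\sup}\nu^{(n)}(\{n^{-1/2}q\})$, plus a difference of the Palm residual terms at adjacent levels. To handle that difference I would not bound it pointwise but average over the $m \approx \varepsilon n^{1/2}$ levels in $(x-\varepsilon,x]$. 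The residual differences then telescope, and the averaged Palm mass is at most $C\nu^{(n)}((x-\varepsilon,x+\varepsilon])/(\varepsilon n^{1/2}) + O(n^{-1/2})$.

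Combining this averaged bound with the prelimit version of the increment relation (so that $H^{(n)}_{x}$ is nearly constant over windows of width $o(1)$) should give $\limsup_{k}|H^{(n_{k})}_{x}| \le C\,\sup_{z}\nu((z-\varepsilon,z+\varepsilon])^{\kappa}$ for some $\kappa>0$, the supremum taken over $z$ near $x$. Since $\nu$ has a density near $x$, this tends to $0$ as $\varepsilon\downarrow 0$, which gives \eq{Hn-x01}.
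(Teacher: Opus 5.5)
Your ($\Leftarrow$) direction is the paper's argument. Reading off the $\theta^{1}$ coefficient of the $(x,y]$ identity is the same as dividing by $\theta$ and letting $\theta\uparrow 0$ to reach \eq{SDQ-BAR6}, and the Lipschitz bound follows as you say. The last term of your increment relation should be $+(y-x)B(y)$, cf.\ \eq{SDQ-BAR6}, but you only use $|B(y)|\le\|b\|_{\infty}$, so this slip is harmless. Your H\"older reduction in ($\Rightarrow$) is also sound.

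The gap is the step you only say ``should give''. Your stated target, $\alpha^{(n)}_{e}\dd{P}^{(n)}_{e}[L^{(n)}(0-)=q^{(n)}_{x}]\to 0$ at the given level, is never reached. The telescoping leaves boundary terms $\alpha^{(n)}_{e}\dd{E}^{(n)}_{d}[\widecheck{R}^{(n)}_{e}(0)1(L^{(n)}(0)=q)]$ at the two ends of the window. These are only $O(1)$ by \cor{SDQ-Re-Rs 1}, and they are precisely ingredients of $H^{(n)}$. So your averaged bound is of the same order as the trivial bound $1/m$ that follows from $\sum_{q}\dd{P}^{(n)}_{e}[L^{(n)}(0-)=q]=1$. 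All it yields is \emph{some} $n$-dependent level $q^{*}$ in the window with Palm mass $O((\varepsilon n^{1/2})^{-1})$, at which $H^{(n)}$ is small by H\"older.

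To transfer this from $q^{*}$ to $q^{(n)}_{x}$ you need a prelimit estimate such as $\sup_{|z-x|\le\varepsilon}|H^{(n)}_{z}-H^{(n)}_{x}|\le C(\varepsilon+\nu^{(n)}([x-\varepsilon,x+\varepsilon]))+o(1)$. You cannot obtain this the way you obtained the limiting relation. \eq{asym-BAR>1} holds only up to a $\theta o(1)$ error for each fixed $\theta$, with no analyticity in $\theta$, so there are no coefficients to compare.

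The step can be repaired as follows.
\begin{enumerate}
\item Write \eq{asym-BAR>1} at two fixed values $\theta_{1}\ne\theta_{2}<0$ and eliminate $\dd{E}[\widehat{b}^{(n)}(\widehat{L}^{(n)})1(\widehat{L}^{(n)}>x)]$. This expresses $H^{(n)}_{x}$, up to $o(1)$, through quantities that change by at most $C(|z-x|+n^{-1/2}+\nu^{(n)}([x-\varepsilon,x+\varepsilon]))$ between $x$ and $z$.
\item Use the uniformity in $x$ asserted in \lem{asym-BAR1}; this is needed because $z$ depends on $n$.
\item Pigeonhole and H\"older then give $\limsup_{k}|H^{(n_{k})}_{x}|\le C(\varepsilon+\nu([x-2\varepsilon,x+2\varepsilon]))$, which vanishes as $\varepsilon\downarrow 0$.
\end{enumerate}

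The paper instead performs directly the pointwise conversion you identified as the main obstacle. It applies \eq{BAR1} to $f(u,\vc{v})=(v_{1}\wedge n^{1/2})(v_{2}\wedge n^{1/2})^{j}1(u=q^{(n)}_{x})$, which gives \eq{BAR-RdRe}. There, stationary expectations on the single level $\{L^{(n)}=q^{(n)}_{x}\}$ equal a sum of \emph{nonnegative} Palm terms at the same level. The stationary side is negligible by \eq{Lqx-delta} and the uniform integrability in \lem{SDQ-moment}. Hence \eq{RLn-e} and \eq{RLn-d} follow term by term, shifting to $q^{(n)}_{x}+1$ for the arrival-Palm term. That route is local and needs no averaging, no H\"older step and no uniformity in $x$.
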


\begin{proof}
For a vaguely convergent subsequence $\{\nu^{(n_{k})}; k \ge 1\}$ of $\sr{S}_{sd}$, denote its vague limit by $\nu$. We first assume that $\nu$ has a density  in a neighborhood of $x$, and prove that it implies \eq{Hn-x01} at $x$.

From the definition \eq{Hn-x} for $H^{(n_{k})}_{x}$, \eq{Hn-x01} is obtained if we show that, for $j=1,2$,
\begin{align}
\label{eq:RLn-e}
 & \lim_{n \to \infty} \dd{E}^{(n_{k})}_{d}[(\widecheck{R}^{(n_{k})}_{e}(0))^{j} 1(L^{(n_{k})}(0) = q^{(n_{k})}_{x})] = 0, \\
\label{eq:RLn-d}
 & \lim_{n \to \infty} \dd{E}^{(n_{k})}_{e}[(\widecheck{R}^{(n_{k})}_{d}(0-))^{j} 1(L^{(n_{k})}(0-) = q^{(n_{k})}_{x})] = 0. 
\end{align}
To prove these facts, we first show that if $\nu$ has a density in a neighborhood of $x$, then
\begin{align}
\label{eq:Lqx-delta}
  \lim_{n \to \infty} \dd{P}\left[L^{(n_{k})} - q^{(n_{k})}_{x} \in [-\delta,\delta]\right] = 0, \qquad \delta \ge 1, \ x > 0.
\end{align}
Since $q^{(n_{k})}_{x} = [n_{k}^{1/2} x]$, there is $k_{0} \ge 1$ for each $\varepsilon > 0$ such that
\begin{align*}
  [n_{k}^{-1/2} (q^{(n_{k})}_{x} - \delta), n_{k}^{-1/2} (q^{(n_{k})}_{x} + \delta)] \subset [x - \varepsilon, x + \varepsilon], \qquad k \ge k_{0}.
\end{align*}
This implies that
\begin{align*}
  \dd{P}\left[L^{(n_{k})} - q^{(n_{k})}_{x} \in [-\delta,\delta]\right] & \le \dd{P}\left[\widehat{L}^{(n_{k})} \in [x - \varepsilon, x + \varepsilon]\right]  \nonumber\\
  & = \nu^{(n_{k})}([x - \varepsilon, x + \varepsilon]) \to \nu([x - \varepsilon, x + \varepsilon]), \qquad k \to \infty,
\end{align*}
where $\nu$ is the vague limit of $\{\nu^{(n_{k})}; k \ge 1\}$. Since $\nu$ has a density in a neighborhood of $x$, we have \eq{Lqx-delta} by letting $\varepsilon \downarrow 0$ in this formla.

We now prove \eq{RLn-e}. This can be done in the same way as the proof of \cite[Lemma 13]{Miya2025}. We outline this proof. Applying the test function $f(u,v_{1},v_{2}) = (v_{1} \wedge n^{1/2}) (v_{2} \wedge n^{1/2})^{j} 1(u=q^{(n_{k})}_{x})$ to the BAR \eq{BAR1}, we have
\begin{align}
\label{eq:BAR-RdRe}
 &\dd{E}[\lambda^{(n_{k})}(q^{(n_{k})}_{x}) \widecheck{R}^{(n_{k})}_{d} 1(R^{(n_{k})}_{e} \le n^{1/2}, L^{(n_{k})} = q^{(n_{k})}_{x})] \nonumber\\
 & \qquad + \dd{E}[\mu^{(n_{k})}(q^{(n_{k})}_{x}) \widecheck{R}^{(n_{k})}_{e} 1(R^{(n_{k})}_{d} \le n^{1/2}, L^{(n_{k})} = q^{(n_{k})}_{x})] \nonumber\\
 & \quad = \alpha^{(n_{k})}_{e} \dd{E}[\widecheck{T}^{(n_{k})}_{A}] \dd{E}^{(n_{k})}_{e}[\widecheck{R}^{(n_{k})}_{d}(0-) 1(L^{(n_{k})}_{1}(0) = q^{(n_{k})}_{x})]  \nonumber\\
 & \qquad + \alpha^{(n_{k})}_{e} \dd{E}[\widecheck{T}^{(n_{k})}_{S}] \dd{E}^{(n_{k})}_{d}[\widecheck{R}^{(n_{k})}_{e}(0) 1(L^{(n_{k})}(0) = q^{(n_{k})}_{x})].
\end{align}
This BAR corresponds to the BAR \cite[(A.139)]{Miya2025}. Then, using \lem{SDQ-moment} and \cor{SDQ-Re-Rs 1} instead of \cite[Lemma 5 and Corollary 5]{Miya2025}, respectively, \eq{RLn-e} is proved.

We next prove \eq{RLn-d}. This can be essentially done similarly to the proof of \eq{RLn-e}, but we cannot directly use \eq{BAR-RdRe} because the $\dd{E}^{(n_{k})}_{e}[\widecheck{R}^{(n_{k})}_{d}(0-) 1(L^{(n_{k})}_{1}(0) = q^{(n_{k})}_{x})]$ in it is slightly different from $\dd{E}^{(n_{k})}_{e}[(\widecheck{R}^{(n_{k})}_{d}(0-))^{j} 1(L^{(n_{k})}(0-) = q^{(n_{k})}_{x})]$ in \eq{RLn-d}. We fill this gap by \eq{Lqx-delta}. Namely, we replace $q^{(n_{k})}_{x}$ by $q^{(n_{k})}_{x}+1$, then \eq{RLn-d} is obtained in the exactly same way as \eq{RLn-e} using the fact that
\begin{align*}
  \dd{E}^{(n_{k})}_{e}[\widecheck{R}^{(n_{k})}_{d}(0-) 1(L^{(n_{k})}_{1}(0) = q^{(n_{k})}_{x}+1)] = \dd{E}^{(n_{k})}_{e}[\widecheck{R}^{(n_{k})}_{d}(0-) 1(L^{(n_{k})}(0-) = q^{(n_{k})}_{x})],
\end{align*}
because $L^{(n_{k})}_{1}(0) = L^{(n_{k})}(0-) + 1$. Thus, (i) is proved.

We next assume that \eq{Hn-x01} holds in a neighborhood of $x$, and prove that it implies that $\nu$ has a density in a neighborhood of $x$. For this, applying \lem{limit-hbn} to \eq{asym-BAR=<2} for $k \to \infty$, we have, for $\theta \le 0$,
\begin{align}
 \label{eq:SDQ-BAR3}
 & \frac 12 \int_{0}^{x} \sigma^{2}(u) \left(\beta(u) + \theta \right) e^{\theta u} d\nu(u) \nonumber\\
 & \quad  + \lim_{k \to \infty} \mu^{(n_{k})}(0) n^{1/2} \nu^{(n)}(\{0\}) + e^{\theta x} \int_{x}^{\infty} b(u) d\nu(u) = 0.
\end{align}
Substituting $\theta = 0$ in this equation and using $\beta(u) \sigma^{2}(u) = 2b(u)$, we have
\begin{align}
 \label{eq:Ln-x03}
 \lim_{k \to \infty} \mu^{(n_{k})}(0) n^{1/2} \nu^{(n_{k})}_{0}(\{0\}) = - \int_{0}^{\infty} b(u) \nu(du).
\end{align}
Hence, \eq{SDQ-BAR3} can be written as
\begin{align}
 \label{eq:SDQ-BAR4}
 & \int_{0}^{x} \left( \beta(u) + \theta \right) \sigma^{2}(u)e^{\theta u} d\nu(u) = \int_{0}^{\infty} \beta(u) \sigma^{2}(u) d\nu(u) - e^{\theta x} \int_{x}^{\infty} \beta(u) \sigma^{2}(u) d\nu(u).
\end{align}

Define a measure $\xi$ on $(\dd{R}_{+},\sr{B}_{+})$ by
\begin{align}
\label{eq:xi-distribution-1}
 \xi([0,x]) \equiv \int_{0}^{x} \sigma^{2}(u) d\nu(u), \qquad x \ge 0,
\end{align}
then $\xi$ is a finite measure on $(\dd{R}_{+},\sr{B}_{+})$ because $\sigma^{2}(u)$ is bounded, and \eq{SDQ-BAR4} becomes
\begin{align}
 \label{eq:SDQ-BAR5}
 & \int_{0}^{x} \left( \beta(u) + \theta \right) e^{\theta u} d\xi(u) = \int_{0}^{\infty} \beta(u) d\xi(u) - e^{\theta x} \int_{x}^{\infty} \beta(u) d\xi(u).
\end{align}
From this equation and the assumption on \eq{Hn-x01}, choosing $y > x$  in the neighborhood of $x$ in which \eq{Hn-x01} holds, we have
\begin{align*}
 & \int_{x}^{y} \left( (e^{\theta u} - e^{\theta x}) \beta(u) + \theta e^{\theta u} \right) d\xi(u) = (e^{\theta x} - e^{\theta y}) \int_{y}^{\infty} \beta(u) d\xi(u).
\end{align*}
Dividing both sides of this equation and letting $\theta \uparrow 0$, we have
\begin{align}
 \label{eq:SDQ-BAR6}
 & \int_{x}^{y} \left( (u - x) \beta(u) + 1 \right) d\xi(u) = (x-y) \int_{y}^{\infty} \beta(u) d\xi(u),
\end{align}
which yields
\begin{align*}
 & \xi((x,y]) \le (y-x) ((y-x)/2 + 1) {\beta^{*}} \xi([0,\infty)), \qquad 0 \le x < y,
\end{align*}
where recall that $\beta^{*} = \sup_{u \ge 0} |\beta(u)| < \infty$. Hence, $\xi([0,x])$ is Lipschitz continuous in a neighborhood of $x$, so $\xi$ has a density in the neighborhood of $x$. By its definition \eq{xi-distribution-1}, $\nu$ must have a density in the neighborhood of $x$ as well.
\end{proof}

\begin{proof}[Proof of \thr{SDQ-main-1}]
By \lem{Helly}, for each subsequence of $\sr{S}_{sd}$, there is a further subsequence $\{\nu^{(n_{k})}; k \ge 1\}$ such that
\begin{align*}
  \nu^{(n_{k})} \os{v}{\to} \nu, \qquad k \to \infty. 
\end{align*}
In what follows, we show that $\nu$ has the density $h$ of \eq{SDQ-h} up to a constant multiplier for all vaguely converging subsequences.

As in the proof of \lem{Ln-x01}, we define $\xi$ by \eq{xi-distribution-1}. Then, $\xi$ has a density because $\nu$ has a density by (\sect{preliminaries}.h). Denote this density function by $g$. Furthermore, (\sect{preliminaries}.h) implies the condition (\sect{p-main}.a) by \lem{Ln-x01}, so we have the asymptotic BAR \eq{asym-BAR=<2}. Hence, \eq{SDQ-BAR5} is obtained by the exactly same arguments in the proof of \lem{Ln-x01}. 

Thus, we can differentiate \eq{SDQ-BAR5} on $x$ for each fixed $\theta \le 0$, which yields
\begin{align*}
 & (\beta(x) + \theta) e^{\theta x} g(x) = - \theta e^{\theta x} \int_{x}^{\infty} \beta(u) g(u) du + e^{\theta x} \beta(x) g(x).
\end{align*}
This equation can be written as
\begin{align}
 \label{eq:SDQ-BAR7}
 g(x) + \int_{x}^{\infty} \beta(u) g(u) du = 0.
\end{align}
Obviously, the solution $g$ of this integral equation is
\begin{align}
 \label{eq:g}
 g(x) = C' \exp\left(\int_{0}^{x} \beta(u) du\right),
\end{align}
where $C'$ is a constant.

Let $h(x)$ be the density function of $\nu$, then $h(x) = g(x)/\sigma^{2}(x)$ by \eq{xi-distribution-1}, and, by \eq{g},
\begin{align*}
 h(x) = \frac {C'}{\sigma^{2}(x)} \exp\left(\int_{0}^{x} \beta(u) du\right), \qquad x \ge 0.
\end{align*}
This density agrees with the density $h$ of \eq{SDQ-h} up to a constant multiplier. Hence, $\nu$ is a probability measure if only if 
\begin{align*}
  \int_{0}^{\infty} h(x) dx < \infty.
\end{align*}
Let $\ul{\sigma} = \inf_{u \in \dd{R}_{+}}$ and $\ol{\sigma} = \sup_{u \in \dd{R}_{+}}$. Since $0 < \ul{\sigma} \le \ol{\sigma} < \infty$ by (f1) of (\sect{preliminaries}.f), this condition is equivalent to
\begin{align}
\label{eq:g-stable}
  \int_{0}^{\infty} g(x) dx < \infty.
\end{align}
We show that \eq{g-stable} is equivalent to $b_{\infty} < 0$. First assume that $b_{\infty} < 0$. Then, there are $\delta > 0$ and $x_{0} \ge 0$ such that $b(u) < -\delta$ for $u \ge x_{0}$, so it follows from \eq{SDQ-BAR7} that
\begin{align*}
  \int_{x_{0}}^{\infty} g(u) du < \frac {\ol{\sigma}}{2 \delta} g(x_{0}) < \infty.
\end{align*}
Hence, we have \eq{g-stable}. We next assume that $b_{\infty} \ge 0$. In this case, there is $x_{1} \ge 0$ for any $\varepsilon > 0$ such that $b(u) > -\varepsilon$ for $u \ge x_{1}$, and, from \eq{SDQ-BAR7}, we have
\begin{align*}
  \int_{x_{0}}^{\infty} g(u) du > \frac {\ul{\sigma}}{2 \varepsilon} g(x_{0}) \to \infty, \qquad \varepsilon \downarrow 0.
\end{align*}
Hence, \eq{g-stable} does not hold. This completes the proof of \thr{SDQ-main-1}.
\end{proof}

\subsection{Proof of \thr{MLQ-main-1}}
\label{sec:p-MLQ-main-1}

We recall some notation for the multi-level queue. For its $n$-th system, its $i$-th level is $\ell^{(n)}_{i} \equiv n^{1/2} \ell_{i}$ for strictly increasing sequence $\{\ell_{i}; i =1,2,\ldots\}$, and $\lambda^{(n)}(u) = \lambda^{(n)}_{i}$, $\mu^{(n)}(u) = \mu^{(n)}_{i}$ and $\beta^{(n)}(u) = \beta^{(n)}_{i}$ for $u \in S^{(n)}_{i}$, where $S^{(n)}_{1} = [0,q^{(n)}_{1}]$ and $S^{(n)}_{i} = (q^{(n)}_{i-1}, \ell^{(n)}_{i}]$ for $i \ge 2$. For $n \to \infty$, $\widehat{b}^{(n)}_{i} \to b_{i}$, $\widehat{\sigma}^{(n)}_{i} \to \sigma_{i}$ and $\widehat{\beta}^{(n)}_{i} \to \beta_{i}$ (see \eq{2g-heavy}). Furthermore, the heavy traffic condition (\sect{preliminaries}.f) holds by \lem{MLQ-heavy}.

For the multi-level queue, we prove the condition (\sect{preliminaries}.h). Once (\sect{preliminaries}.h) is proved, we have \thr{MLQ-main-1} by \thr{SDQ-main-1}. To prove (\sect{preliminaries}.h), we start with the asymptotic BAR's \eq{asym-BAR=<1} and \eq{asym-BAR>1} for an arbitrarily chosen vaguely convergent subsequence $\{\nu_{n_{k}}; k \ge 1\}$ of $\sr{S}_{sd}$. That is, there is a subprobability measure $\nu$ such that
\begin{align*}
 \nu^{(n_{k})} \os{v}{\to} \nu, \qquad k \to \infty.
\end{align*}
Put $\theta = 0$ in \eq{asym-BAR=<1} for $\{\nu^{(n_{k})}; k \ge 1\}$, and let $k \to \infty$, then we have \eq{SDQ-BAR3} because $\beta(u) \sigma^{2}(u) = 2b(u)$ and $H^{(n_{k})}_{x}$ disappears by $\theta=0$. Hence, \eq{Ln-x03} is obtained again. Let
\begin{align*}
  B_{x} = - \int_{x}^{\infty} b(u) \nu(du), \qquad x \in \dd{R}_{+},
\end{align*}
then, applying the subsequence $\{\nu^{(n_{k})}; k \ge 1\}$ to \eq{asym-BAR=<1}, and, letting $k \to \infty$, we have
\begin{align*}
 & \frac 12 \int_{0}^{x} (\beta(u) + \theta) e^{\theta u} \sigma^{2}(u) \nu(du) = - B_{0} + e^{\theta x} B_{x} - \lim_{k \to \infty} \theta e^{\theta x} H^{(n_{k})}_{x}, \qquad x \in \dd{R}_{+}, \ \theta \le 0,
\end{align*}
by \eq{Ln-x03}. Hence, $\lim_{k \to \infty} H^{(n_{k})}_{x}$ exists and is finite. Denote this limit by $H_{x}$, then we have
\begin{align}
 \label{eq:BAR-nu=<x}
 & \frac 12 \int_{0}^{x} (\beta(u) + \theta) e^{\theta u} \sigma^{2}(u) \nu(du) = - B_{0} + e^{\theta x} B_{x} - \theta e^{\theta x} H_{x}, \qquad x \in \dd{R}_{+}, \ \theta \in \dd{R},
\end{align}
where the range of $\theta$ is extended from $\theta \le 0$ to $\theta \in \dd{R}$ because the lefthand side of this equation is the integral on a finite interval. For $0 < x < y$, from \eq{BAR-nu=<x}, we have
 \begin{align}
 \label{eq:BAR-nu=<x<y}
 & \frac 12 \int_{x}^{y} (\beta(u) + \theta) e^{\theta u} \sigma^{2}(u) \nu(du) \nonumber\\
 & \quad = e^{\theta y} B_{y} - e^{\theta x} B_{x} - \theta (e^{\theta y} H_{y} - e^{\theta x} H_{x}), \qquad x, y \in \dd{R}_{+}, \ \theta \in \dd{R},
\end{align}
Similarly, from \eq{asym-BAR>1}, we have
\begin{align}
 \label{eq:BAR-nu>x}
 & \frac 12 \int_{x}^{\infty} (\beta(u) + \theta) e^{\theta u} \sigma^{2}(u) \nu(du) = - e^{\theta x} B_{x} + \theta e^{\theta x} H_{x}, \quad \theta \le 0, \ x > 0,
\end{align}
where $\theta$ is limited to be nonpositive because the lefthand side of this equation is the integral over $(x,\infty)$.

Let $\nu_{i}$ be the restriction of $\nu$ on $S_{i}$, that is, $\nu_{i}(B) = \nu(B \cap S_{i})$ for $B \in \sr{B}$, and let $\varphi_{i}(\theta)$ be the mgf of $\nu_{i}$, where mgf means moment generating function. Namely,
\begin{align*}
  \varphi_{i}(\theta) = \int_{S_{i}} e^{\theta u} \nu_{i}(du), \qquad \theta \le 0,\ i \in J_{1,K+1},
\end{align*}
where recall that $i < \infty$ for $i \in J_{1,K+1}$, so $J_{1,K+1} = \{i \ge 1; i < \min(K+2,\infty)\}$. Note that $S_{K+1} = \emptyset$ for $K=\infty$.

Using \eq{BAR-nu=<x<y}, we compute $\varphi_{i}(\theta)$ for $i \in J_{1,K}$, and compute $\varphi_{K+1}(\theta)$ by \eq{BAR-nu>x} if $K < \infty$. We prove (\sect{preliminaries}.h) by these computations and the following fact that
\begin{align}
\label{eq:mgf-exp}
  \frac {e^{(\beta_{i} + \theta) \ell_{j}} - 1} {\beta_{i} + \theta} = \int_{0}^{\ell_{j}} e^{\theta x} e^{\beta_{i} x} dx, \qquad i, j \in J_{1,K}, 
\end{align}
is the mgf of the exponential function $e^{\beta_{i} x}$ if $\beta_{i} \not= 0$ and the constant function $1$ if $\beta_{i} = 0$ on $[0,\ell_{j}] = \cup_{j'=1}^{j} S_{j'}$. 
 
We first compute the mgf $\varphi_{1}(\theta)$ of $\nu_{1}$. For this, put $x=\ell_{1}$ in \eq{BAR-nu=<x}. Since $b(u) = b_{1}$, $\beta(u) = \beta_{1}$ and $\sigma^{2}(u) = \sigma^{2}_{1}$ for $u \in S_{1}$, we have
\begin{align}
\label{eq:phi-S10}
 & \frac 12 (\beta_{1} + \theta) \sigma^{2}_{1} \varphi_{1}(\theta) = - B_{0} + e^{\theta \ell_{1}} B_{\ell_{1}} - \theta e^{\theta \ell_{1}} H_{\ell_{1}}, \qquad \theta \in \dd{R}.
\end{align}
Letting $\theta = - \beta_{1}$ in this equation, we have
\begin{align*}
 & 0 = - B_{0} + e^{-\beta_{1} \ell_{1}} B_{\ell_{1}} + \beta_{1} e^{-\beta_{1} \ell_{1}} H_{\ell_{1}}.
\end{align*}
Subtracting this equation from \eq{phi-S10} side by side, we have
\begin{align*}
  \frac 12 (\beta_{1} + \theta) \sigma^{2}_{1} \varphi_{1}(\theta) & = (e^{\theta \ell_{1}} - e^{- \beta_{1} \ell_{1}}) (B_{\ell_{1}} + \beta_{1} H_{\ell_{1}}) - (\beta_{1} + \theta) e^{\theta \ell_{1}} H_{\ell_{1}} 
\end{align*}
Dividing both sides of this equation by $\beta_{1} + \theta$ for $\theta \not= - \beta_{1}$, we have
\begin{align}
\label{eq:phi-S11}
  \frac 12 \sigma^{2}_{1} \varphi_{1}(\theta) = \frac {e^{(\beta_{1} + \theta) \ell_{1}} - 1} {\beta_{1} + \theta} e^{- \beta_{1} \ell_{1}}( B_{\ell_{1}} + \beta_{1} H_{\ell_{1}}) - e^{\theta \ell_{1}} H_{\ell_{1}}.
\end{align}
By \eq{mgf-exp}, this shows that $\nu_{1}$ has a density on [$0,\ell_{1})$ while it has a point mass at $\ell_{1}$ if and only if $H_{\ell_{1}} \not= 0$.

We next consider the mgf of $\nu_{i}$ on $S_{i}$ for $2 \le i < \min(K+1,\infty)$. For this, put $x=\ell_{i-1}$ and $y=\ell_{i}$ in \eq{BAR-nu=<x<y}. Since $b(u) = b_{i}$, $\beta(u) = \beta_{i}$ and $\sigma^{2}(u) = \sigma^{2}_{i}$ for $u \in S_{i}$, we have
\begin{align*}
 & \frac 12 (\beta_{i} + \theta) \sigma^{2}_{i} \varphi_{i}(\theta) = e^{\theta \ell_{i}} B_{\ell_{i}} - e^{\theta \ell_{i-1}} B_{\ell_{i-1}} - \theta (e^{\theta \ell_{i}} H_{\ell_{i}} - e^{\theta \ell_{i-1}} H_{\ell_{i-1}}) \nonumber\\
 & \quad = e^{\theta \ell_{i}} (B_{\ell_{i}} + \beta_{i} H_{\ell_{i}}) - e^{\theta \ell_{i-1}} (B_{\ell_{i-1}} + \beta_{i} H_{\ell_{i-1}}) - (\beta_{i} + \theta) (e^{\theta \ell_{i}} H_{\ell_{i}} - e^{\theta \ell_{i-1}} H_{\ell_{i-1}}).
\end{align*}
Dividing both sides of this equation by $\beta_{i} + \theta$ for $\theta \not= - \beta_{i}$, we have
\begin{align}
\label{eq:phi-Si2}
  \frac 12 \sigma^{2}_{i} \varphi_{i}(\theta) & = \frac {e^{(\beta_{i} + \theta) \ell_{i}} - 1} {\beta_{i} + \theta} e^{- \beta_{i} \ell_{i}} (B_{\ell_{i}} + \beta_{i} H_{\ell_{i}}) - \frac {e^{(\beta_{i} + \theta) \ell_{i-1}} - 1} {\beta_{i} + \theta} e^{- \beta_{i} \ell_{i-1}} (B_{\ell_{i-1}} + \beta_{i} H_{\ell_{i-1}}) \nonumber\\
  & \quad + e^{\theta \ell_{i-1}} H_{\ell_{i-1}} - e^{\theta \ell_{i}} H_{\ell_{i}}.
\end{align}
Hence, by \eq{mgf-exp} and the fact that $\varphi_{i}(\theta)$ is the mgf of $\nu_{i}$ on $S_{i} = (\ell_{i-1},\ell_{i}]$, we must have
\begin{align}
\label{eq:Hi=0}
  H_{\ell_{i-1}} = 0, \qquad H_{\ell_{i}} \le 0, \qquad 2 \le i < \min(K+1,\infty).
\end{align}
Hence, $\nu_{i}$ has a density on $(\ell_{i-1}, \ell_{i})$ and a point mass at $\ell_{i}$ if $H_{\ell_{i}} < 0$. Furthermore, we can see that $H_{\ell_{j}} = 0$ for $1 \le j \le i-1$, and therefore $\nu_{j}$ has a density on $S_{j}$ for $1 \le j \le i-1$. Namely, $\nu$ has a density on $[0,\ell_{i-1}]$ for $2 \le i < \min(K+1,\infty)$.
This proves (\sect{preliminaries}.h) for $K=\infty$.

Thus, it remains to prove (\sect{preliminaries}.h) for $K < \infty$. In this case, $\nu$ has a density on $[0,\ell_{K-1}]$, so it remains to prove that $\nu_{i}$ has a density on $S_{K+1} = (\ell_{K}, \infty)$. To see this, we derive the following equation from \eq{BAR-nu>x} for $x=\ell_{K}$.
\begin{align}
 \label{eq:phi-SK1}
 & \frac 12 (\beta_{K+1} + \theta) \sigma^{2}_{K+1} \varphi_{K+1}(\theta) = - e^{\theta \ell_{K}} B_{\ell_{K}} + \theta e^{\theta \ell_{K}} H_{\ell_{K}}, \qquad \theta \le 0.
\end{align}
This can be written as
\begin{align}
 \label{eq:phi-SK2}
 & \frac 12 \sigma^{2}_{K+1} \varphi_{K+1}(\theta) = \frac {- e^{\theta \ell_{K}}}{\beta_{K+1} + \theta} (B_{\ell_{K}} + \beta_{K+1} H_{\ell_{K}}) + e^{\theta \ell_{K}} H_{\ell_{K}}, \qquad \theta \le 0.
\end{align}
Since the lefthand side of this equation is the mgf of $\nu_{K+1}$ on $S_{K+1} = (\ell_{K}, \infty)$, $\nu(\{\ell_{K}\}) = 0$. On the other hand, in its righthand side, $\frac {- e^{\theta \ell_{K}}}{\beta_{K+1} + \theta}$ is the mgf of $\exp(\beta_{K+1} x)$ on $(\ell_{K}, \infty)$ and $e^{\theta \ell_{K}} H_{\ell_{K}}$ is the mgf of the mass $H_{\ell_{H}}$ at $\ell_{K}$. Hence, we must have $H_{\ell_{K}} = 0$, and therefore $\nu_{K+1}$ on $S_{K+1}$ has a density for $K<\infty$.

Thus, the proof of \thr{MLQ-main-1} is completed. 

\appendix

\section*{Appendix}
\setnewcounter
\setcounter{section}{1}

\subsection{Proof of \lem{Ln-unique}}
\label{app:Ln-unique}

In the proof of this lemma, the index $n$ of the state-dependent queue is fixed. So, we remove the superscript ``$^{(n)}$'' from all the notations for the $n$-th state-dependent queue throughout this proof. For example, $\lambda^{(n)}(u)$, $\mu^{(n)}(u)$ and $L^{(n)}(u)$ are denoted by $\lambda(u)$, $\mu(u)$ and $L(u)$, respectively. In particular, \eq{ZLn-t} and \eq{Ln-t1} can be written as
\begin{align}
\label{eq:ZLn-tA}
 & Z(L,t) = A(U(L,t)) - S(V(L,t)),\\
 \label{eq:Ln-t1A}
 & L(t) = L(0-) + Z(L,t), \qquad t \ge 0.
\end{align}

We prove this lemma by induction over the times when either an arrival or a departure is counted. Denote its $k$-th time by $t_{k}$, and inductively construct $t_{k}$ and $L(s)$ for $s \in [0,t_{k}]$. Define
\begin{align*}
 & L_{k}(s) = L(s) 1(s \le t_{k}) + L(t_{k}) 1(s > t_{k}), \qquad s \ge 0, \qquad k=0,1,\ldots.
\end{align*}
Consider the case that $k=0$. In this case, from \eq{ZLn-tA} and \eq{Ln-t1A},
\begin{align*}
  L(0) = L(0-) + Z(L,0) = L(0-) + A(0) - S(0).
\end{align*}
We put $t_{0} = 0$, then $L(s)$ for $s \in [0,t_{0}]$ is uniquely determined. We consider this $t_{0}$ and $L(0)$ to be the initial state of $L(\cdot)$. For the next step, we purt $t_{e,0} = t_{d,0} = 0$ and $\ell_{e}(0) = \ell_{d}(0) = 0$, where the meaning of $\ell_{e}(i)$ and $\ell_{d}(i)$ for $i \ge 0$ will be explained later.

Then, for $k=1$, define $t_{e,1}$, $t_{d,1}$ and $t_{1}$ by
\begin{align}
\label{eq:t-step1}
\begin{split}
 & t_{e,1} = \inf \left\{t > t_{e,\ell_{e}(0)}; \int_{t_{e,\ell_{e}(0)}}^{t} \lambda(L_{0}(s)) ds = T_{A}(0) \right\},\\
 & t_{d,1} = \inf \left\{t > t_{d,\ell_{d}(0)} ; \int_{t_{d,\ell_{d}(0)}}^{t} \mu(L_{0}(s)) 1(L_{0}(s) > 0) ds = T_{S}(0) \right\},\\
 & t_{1} = t_{e,1} \wedge t_{d,1}. 
\end{split}
\end{align}
Note that these $t_{e,1}$ and $t_{d,1}$ may be different from $t^{(n)}_{e,1}$ and $t^{(n)}_{d,1}$, respectively, which are defined above \eq{Re-t2}. We can see that $L_{0}(s) = L_{0}(0)$ for $s \in [0, t_{1})$, so $t_{e,1}$ and $t_{d,1}$ are uniquely determined and $t_{1}$ is the first time when either an arrival or a departure is counted after time $0$. From \eq{ZLn-tA}, we have
\begin{align}
\label{eq:Z-step1}
 \Delta Z(L,t_{1}) = Z(L,t_{1}) - Z(L,t_{1}-) = \begin{cases}
  1,  & t_{e,1} < t_{d,1}, \\
  -1, & t_{e,1} > t_{d,1},  \\
  0, & t_{e,1} = t_{d,1},
 \end{cases}
\end{align}
and define $L(t_{1})$ by \eq{Ln-t1A}, then $L(s)$ for $s \in [0,t_{1}]$ is uniquely determined by \eq{Ln-t1A}. This completes the first step of the induction. 

To proceed with the induction, assume that $t_{k}$, $t_{e,k}$, $t_{d,k}$, $\ell_{e}(k-1)$, $\ell_{d}(k-1)$ and $L_{k}(s)$ for $s \in [0,t_{k}]$ are defined, where $t_{k} = t_{e,k} \wedge t_{d,k}$, and $\ell_{e}(k-1)$ and $\ell_{d}(k-1)$ are the last arrival and departure numbers at time $t_{k}$. Then, similarly to \eq{t-step1}, define
\begin{align*} 
 & t_{e,k+1} = \inf \left\{t > t_{e,\ell_{e}(k)}; \int_{t_{e,\ell_{e}(k)}}^{t} \lambda(L_{k}(s)) ds = T_{A}(k) \right\},\\
 & t_{d,k+1} = \inf \left\{t > t_{d,\ell_{d}(k)} ; \int_{t_{d,\ell_{d}(k)}}^{t} \mu(L_{k}(s)) 1(L_{k}(s) > 0) ds = T_{S}(k) \right\},\\
 & t_{k+1} = t_{e,k+1} \wedge t_{d,k+1},
\end{align*}
where
\begin{align*}
  & \ell_{e}(k) = \begin{cases}
 \ell_{e}(k-1) + 1, & \mbox{ if } t_{e,k} \le t_{d,k},  \\
 \ell_{e}(k-1), & \mbox{ if } t_{e,k} > t_{d,k},
\end{cases} \quad
  & \ell_{d}(k) = \begin{cases}
 \ell_{d}(k-1), & \mbox{ if } t_{e,k} < t_{d,k},  \\
 \ell_{d}(k-1)+1, & \mbox{ if } t_{e,k} \ge t_{d,k}.
\end{cases}
\end{align*}
Similarly to $t_{e,1}$ and $t_{d,1}$, these $t_{e,k+1}$ and $t_{d,k+1}$ may be different from $t^{(n)}_{e,k+1}$ and $t^{(n)}_{d,k+1}$, respectively. Then, the change of the queue length at time $t_{k+1}$ is given by
\begin{align*}
 \Delta Z(L,t_{k}) = \begin{cases}
  1,  & t_{e,k} < t_{d,k}, \\
  -1, & t_{e,k} > t_{d,k},  \\
  0, & t_{e,k} = t_{d,k}.
 \end{cases}
\end{align*}
Hence, it is not hard to see that $L(s)$ for $s \in [0,t_{k+1}]$ is uniquely determined by \eq{Ln-t1A}. Thus, repeating this procedure, we have $\{L(s); s \in [0,t_{k}]\}$ for all $k \ge 0$ as the unique solution of \eq{Ln-t1}. Since $\lim_{k \to \infty} t_{k} = \infty$, we have uniquely constructed $L(t)$ for all $t \ge 0$.

\subsection{Proof of \lem{Xn-stability}}
\label{app:Xn-stability}

Similarly to \app{Ln-unique}, in the proof of this lemma, we remove the superscript ``$^{(n)}$'' from all the notations for the $n$-th state-dependent queue. In particular, the condition (\sect{preliminaries}.e) can be written as
\begin{mylist}{3}
\item [($\ast$)] $\gamma_{\infty} \equiv \lim_{x \to \infty} \gamma(x)$ exists and $\gamma_{\infty} < 0$ for $n \ge 1$, where $\gamma(u) = \lambda(u) - \mu(u)$.
\end{mylist}
Furthermore, we simply {denote} $U^{(n)}(L^{(n)},t)$ and $V^{(n)}(L^{(n)},t)$ by $U(t)$ and $V(t)$, respectively. We also denote by $t_{e,1}$ the first time when a customer arrives.

Let $|X|(t) = L(t) + R_{e}(t) + R_{d}(t)$ for $t \ge 0$. We show that there is a $\delta > 0$ such that
\begin{align}
 \label{eq:fluid-1}
 \lim_{x \to \infty} \frac 1x \dd{E}_{\vc{x}}[|X|(x \delta)] = 0,
\end{align}
where $\dd{E}_{\vc{x}}(Z) = \dd{E}\left[Z \left|X(0) = \vc{x} \right.\right]$ for a random variable $Z$ and $\vc{x} = ([x],x,x) \in \dd{Z}_{+} \times \dd{R}_{+}^{2}$. Thus, the initial state of $X(\cdot)$ is given by
\begin{align*}
 L(0) = [x], \qquad R_{e}(0) = T_{A}(0) = x, \qquad R_{d}(0) = T_{S}(0) = x,
\end{align*}
under $\dd{P}_{\vc{x}}$. Define the fluid scaled process of $L(\cdot)$ as
\begin{align*}
 \ol{L}_{x}(t) = \frac 1x L(t x), \qquad x > 0, t \ge 0.
\end{align*}
{If} $\lim_{x \to \infty} \ol{L}_{x}(t)$ exists for $t \ge 0$, we denote it by $\ol{L}(t)$, and call $\ol{L}(\cdot) \equiv \{\ol{L}(t); t \ge 0\}$ a fluid limit of $L(\cdot)$. If \eq{fluid-1} holds and if $\ol{L}(\cdot)$ is uniquely obtained and
\begin{align}
\label{eq:fluid-vanish}
  \exists \delta > 0 \; \mbox{ such that } \; \ol{L}(t) = 0, \quad \forall t \ge \delta,
\end{align}
then \lem{Xn-stability} is proved in a similar way to the proof of Theorem 4.2 of \cite{Dai1995}.

To show \eq{fluid-1}, we prove that there are $\delta_{e}, \delta_{d}, \delta_{L} > 0$ such that
\begin{align}
 \label{eq:fluid-Re}
 & \lim_{x \to \infty} \frac 1x \dd{E}_{\vc{x}}[R_{e}(x \delta_{e})] = 0,\\
 \label{eq:fluid-Rd}
 & \lim_{x \to \infty} \frac 1x \dd{E}_{\vc{x}}[R_{d}(x \delta_{d})] = 0,\\
 \label{eq:fluid-L1}
 & \lim_{x \to \infty} \dd{E}_{\vc{x}}[\ol{L}_{x}(\delta_{L})] = 0.
\end{align}
Since $\ol{L}_{x}(\delta_{L}) = \frac 1x L(x \delta_{L})$, these facts {imply} \eq{fluid-1} for $\delta = \max(\delta_{e}, \delta_{d}, \delta_{L})$.

We first prove \eq{fluid-Re}. From the definition \eq{Re-t1} of $R_{e}(\cdot)$, we have
\begin{align}
 \label{eq:Re-t3}
 R_{e}(t) = A(U(t)) - \sum_{i=1}^{A(U(t))} (1-T_{A}(i)) - U(t) + T_{A}(0).
\end{align}
Define $M_{e}(\cdot)$ as
\begin{align*}
 M_{e}(t) = \sum_{i=1}^{A(U(t))} (1-T_{A}(i)), \qquad t \ge 0,
\end{align*}
then $M_{e}(\cdot)$ is shown to be an $\dd{F}$-martingale similarly to Lemma 3.1 of \cite{AtarMiya2026}, and \eq{Re-t3} becomes
\begin{align}
 \label{eq:Re-Mt}
 R_{e}(t) = A(U(t)) - M_{e}(t) - U(t) + T_{A}(0).
\end{align}
\begin{remark}
 \label{rem:DM-decomposition-A}
 \eq{Re-Mt} can be written as
\begin{align}
\label{eq:DM-decomposition-A}
  A(U(t)) = U(t) + R_{e}(t) - T_{A}(0) + M_{e}(t).
\end{align}
Similarly, let $M_{d}(t) = \sum_{i=1}^{S(V(t))} (1-T_{S}(i))$, then $M_{d}(\cdot)$ is a martingale, and
\begin{align}
\label{eq:DM-decomposition-S}
  S(V(t)) = U(t) + R_{d}(t) - T_{S}(0) + M_{d}(t).
\end{align}
Thus, we have semi-martingale decompositions for counting processes $A(U(\cdot))$ and $S(V(\cdot))$, which are detailed in \cite[Section 3]{AtarMiya2026}.
\end{remark}

Since $\dd{E}_{\vc{x}}[M(t)] = 0$ and $T_{A}(0)=x$ under $\dd{P}_{\vc{x}}$, it follows from \eq{Re-Mt} that
\begin{align}
 \label{eq:Re-t4}
 \dd{E}_{\vc{x}}[R_{e}(t)] & = \dd{E}_{\vc{x}}[A(U(t)) - U(t)] + x.
\end{align}
Let $U_{1}(t) = U(t) - U(t_{e,1})$ for $t \ge t_{e,1}(L)$ and let $A_{1}(t) = 1 + \inf \{k \ge 0; t_{A,k+1} - T_{A}(0) > t\}$, then $U(t)$ and $A(U(t))$ can be written as
\begin{align*}
 & U(t) = U(t \wedge t_{e,1}) + U_{1}(t) 1(t \ge t_{e,1}),\\
 & A(U(t)) = A_{1}(U_{1}(t)) 1(t \ge t_{e,1}).
\end{align*}
Note that $ A_{1}(\cdot)$ is a non-delayed renewal process with {inter-counting} times subject to $T_{A}$. Hence, \eq{Re-t4} becomes
\begin{align}
 \label{eq:Re-t5}
 \dd{E}_{\vc{x}}[R_{e}(t)] & = \dd{E}[A_{1}(U_{1}(t)) - U_{1}(t))1(t \ge t_{e,1})] \nonumber\\
 & \quad + \dd{E}_{\vc{x}}[x - U(t \wedge t_{e,1})],
\end{align}
where $U(t_{e,1}) = T_{A}(0) = x$.

By (\sect{preliminaries}.c) and the strong law of large numbers, $\lim_{y \to \infty} A(y)/y = 1$ $a.s.$, so there is a $y_{0}$ for any $\varepsilon > 0$ such that $|A(y) - y| < \varepsilon y$ for all $y \ge y_{0}$ $a.s.$, where $y_{0}$ may depend on the sample $\omega \in \Omega$, that is, it is a finite-valued random variable. Hence,
\begin{align*}
 |A(U(t)) - U(t)| < \varepsilon U(t) \le \varepsilon \lambda_{\sup} t, \qquad U(t) \ge y_{0}, \qquad {a.s.}
\end{align*}
This implies that, for any $\delta > 0$,
\begin{align*}
 \left|\dd{E}[(A_{1}(U_{1}(\delta y)) - U_{1}(\delta y))1(y_{0} \le U_{1}(\delta y))]\right| < \varepsilon \lambda_{\sup} \delta y,  \qquad \forall y > t_{e,1}/\delta.
\end{align*}
Hence,
\begin{align}
 \label{eq:A1U1}
 \lim_{y \to \infty} \frac 1y \left|\dd{E}[(A_{1}(U_{1}(\delta y)) - U_{1}(\delta y))1(y_{0} \le U_{1}(\delta y))]\right| = 0.
\end{align}
On the other hand, 
\begin{align*}
 \lim_{y \to \infty} \frac 1y \dd{E}[(A_{1}(U_{1}(\delta y)) - U_{1}(\delta y))1(U_{1}(\delta y) < y_{0})] = 0,
\end{align*}
because $U_{1}(\delta y) > \delta \lambda_{\inf} y$ and $y_{0} < \infty$. Combining this with \eq{A1U1}, we have
\begin{align*}
 \lim_{y \to \infty} \frac 1y \dd{E}[(A_{1}(U_{1}(\delta y)) - U_{1}(\delta y))] = 0,
\end{align*}
and therefore
\begin{align}
\label{eq:A1U2}
 & \lim_{y \to \infty} \frac 1y \dd{E}[(A_{1}(U_{1}(\delta y)) - U_{1}(\delta y))1(\delta y \ge t_{e,1})] = 0,
\end{align}
because
\begin{align*}
 & \lim_{y \to \infty} \frac 1y \dd{E}[(A_{1}(U_{1}(\delta y)) - U_{1}(\delta y))1(\delta y < t_{e,1})] = 0.
\end{align*}
Applying \eq{A1U2} to \eq{Re-t5}, we have \eq{fluid-Re} for $\delta_{e} = \delta$ if
\begin{align}
\label{eq:Un-tne}
  \lim_{x \to \infty} \frac 1x \dd{E}_{x}[x - U(\delta x \wedge t_{e,1})] = 0.
\end{align}
Since $t_{e,1} \lambda_{\inf} \le T_{A}(0) = x$ under $\dd{P}_{\vc{x}}$, let $\delta = 1/\lambda_{\inf}$, then $U(\delta x \wedge t_{e,1}) = U(t_{e,1}) = x$ under $\dd{P}_{\vc{x}}$. Hence, choosing $\delta_{e} = 1/\lambda_{\inf}$, we have \eq{Un-tne}, and therefore \eq{fluid-Re} is proved. By exactly the same arguments, we also have
\eq{fluid-Rd} for $\delta_{d} = 1/\mu_{\inf}$ .

It remains to prove \eq{fluid-vanish} and \eq{fluid-L1}. In what follows, we prove them in five steps.\medskip

\noindent {\it Step 1}: We derive a flow balance equation for the fluid scaled process $\ol{L}_{y}(\cdot)$ from the equation \eq{Ln-t1} of the queue length process. For $y \ge 1$, let $L(0) = [y]$, then it follows from \eq{Ln-t1} that
\begin{align}
\label{eq:fluid-2}
 & \ol{L}_{y}(t) = \frac 1y L(y\, t) = 1 + \frac 1y \left( A(U(y\, t)) - S(V(y\, t))\right), \qquad t \ge 0.
\end{align}
Let $\ol{A}_{y}(t) = \frac 1y A(yt)$ and $\ol{S}_{y}(t) = \frac 1y S(yt)$, then \eq{fluid-2} can be written as
\begin{align}
\label{eq:fluid-3}
	\ol{L}_{y}(t) & = 1 + \ol{A}_{y}\left(\int_{0}^{t} \lambda(y \ol{L}_{y}(s)) ds \right) \nonumber\\
	& \quad - \ol{S}_{y}\left(\int_{0}^{t} \mu(y \ol{L}_{y}(s)) 1(\ol{L}_{y}(s) > 0) ds \right),
\end{align}
because
\begin{align}
\label{eq:Un-yt}
  U(y\, t) = \int_{0}^{y\, t} \lambda(L(s)) ds = y \int_{0}^{t} \lambda(L(y\, s)) ds = y \int_{0}^{t} \lambda(y \ol{L}_{y}(s)) ds
\end{align}
and similarly
\begin{align}
\label{eq:Vn-yt}
  V(y\, t) = y \int_{0}^{t} \mu(y \ol{L}_{y}(s))1( \ol{L}_{y}(s) > 0) ds.
\end{align}
Thus, we have the equation \eq{fluid-3} for the fluid scaled process $\ol{L}_{y}(\cdot)$.\medskip

\noindent {\it Step 2}: We show that, for any sequence in $\dd{R}_{+}$ which diverges, there is its subsequence $\{y_{k}; k \ge 1\}$ such that $\ol{L}_{y_{k}}(\cdot)$ converges weakly to a continuous process, which is denoted by $\ol{L}(\cdot)$ as we already defined. We prove this weak convergence following the arguments in \cite{AtarMiya2026}. Let $\sr{K} = \{y_{k}; k \ge \}$, and let
\begin{align*}
  U_{y}(t) = \int_{0}^{t} \lambda(y \ol{L}_{y}(s)) ds, \quad V_{y}(t) = \int_{0}^{t} \mu(y \ol{L}_{y}(s)) 1(\ol{L}_{y}(s) > 0) ds, 
\end{align*}
then $\{\ol{A}_{y}(U_{y}(\cdot)); y \in \sr{K}\}$ and $\{\ol{S}_{y}(V_{y}(\cdot)); y \in \sr{K}\}$ are shown to be $C$-tight for any countable set $\sr{K} \subset \dd{N}$ such that $\sup_{y \in \sr{K}} y = \infty$ by similar arguments in Step 1 in the proof of \cite[Lemma 4.4]{AtarMiya2026}. Here, a sequence of processes, $\{X_{y}(\cdot); y \in \sr{K}\}$, is called $C$-tight if the sequence of the probability laws of them is tight, and there is its subsequence $\{y_{k}; k \ge 1\} \subset \sr{K}$ such that $X_{y_{k}}(\cdot)$ converges weakly to a continuous process as $k \to \infty$ (see \cite[Section VI.3.25]{JacoShir2003} and \cite[Lemma 4.8]{AtarMiya2026}). Hence, the righthand side of \eq{fluid-3} is $C$-tight, and threfore $\{\ol{L}_{y}(\cdot); y \in \sr{K}\}$ is also $C$-tight.

Thus, we can find a subsequence $\{y'_{k}; n \ge 1\} \subset \sr{K}$ such that
\begin{align*}
  \left\{\ol{L}_{y'_{k}}(t), \ol{A}_{y'_{k}}\left(\int_{0}^{t} \lambda(y'_{k} \ol{L}_{y'_{k}}(s)) ds\right), \ol{S}_{y'_{k}}\left(\int_{0}^{t} \mu(y'_{k} \ol{L}_{y'_{k}}(s)) 1(\ol{L}_{y'_{k}}(s) > 0) ds \right); t \ge 0\right\}
\end{align*}
converge weakly to continuous processes. Hence, we can find a further subsequence $\{y''_{n}; k \ge 1\} \subset \sr{K}$, by which they converges almost surely to these continuous processes for each fixed $t \ge 0$. We will work on this sub-sub sequence $\{y''_{k}; k \ge 1\}$, but, in what follows, we simply write $y''_{k}$ as $y$ for convenience.
\medskip

\noindent {\it Step 3}: We rewrite \eq{fluid-2} so that it is tractable for computing the limit of $\ol{L}_{y}(\cdot)$ as $y \to \infty$, and derive an equation for the fluid process $\ol{L}_{y}(\cdot)$. To this end, we use the semi-martingale decompositions \eq{DM-decomposition-A} and \eq{DM-decomposition-S} in \rem{DM-decomposition-A}, from which we have.
\begin{align}
\label{eq:A-DM-decomposition-A}
  \frac 1y A(U(yt)) = \frac 1y \left(U(yt) + R_{e}(yt) - T_{A}(0) + M_{e}(yt)\right),\\
\label{eq:S-DM-decomposition-A}
  \frac 1y S(V(yt)) = \frac 1y \left(V(yt) + R_{d}(yt) - T_{S}(0) + M_{d}(yt)\right).
\end{align}
Substituting these formulas into \eq{fluid-2} and using \eq{Un-yt} and \eq{Vn-yt}, we have
\begin{align}
\label{eq:fluid-6}
  \ol{L}_{y}(t) & = 1 + \int_{0}^{t} \lambda(y \ol{L}_{y}(s)) ds - \int_{0}^{t} \mu(y \ol{L}_{y}(s))1( \ol{L}_{y}(s) > 0) ds \nonumber\\
  & \quad + \frac 1y \left(D_{e}(yt) + D_{d}(yt)\right), \quad t \ge 0,
\end{align}
where
\begin{align*}
 & D_{e}(t) = R_{e}(t) - T_{A}(0) + M_{e}(t), \qquad D_{d}(t) = R_{d}(t) - T_{S}(0) + M_{d}(t).
\end{align*}
We here note that $y^{-1} R_{e}(yt)$ and $y^{-1} R_{d}(yt)$ vanish almost surely as $y \to \infty$ because there is a constant $c > 0$ such that $y^{-1} \sup_{s \le cy^{2}} R_{e}(s) \to 0$ in probability as $y \to \infty$ by the arguments in Step 3 of \cite[Section 4.5]{AtarMiya2026}. Furthermore, $y^{-1} M_{e}(yt)$ and $y^{-1} M_{d}(yt)$ vanish almost surely as $y \to \infty$ by the strong law of large numbers for renewal processes $A(\cdot)$ and $S(\cdot)$. Hence, $y^{-1} (D_{e}(yt) + D_{e}(yt))$ vanishes as $y \to \infty$, and therefore it follows from \eq{fluid-6} that
\begin{align}
\label{eq:fluid-4}
  \ol{L}(t) = \lim_{y \to \infty} \ol{L}_{y}(t) = 1 + \lim_{y \to \infty} \frac 1y \left(U(yt) - V(yt)\right) \ge 0.
\end{align}

\noindent {\it Step 4}: We further compute the right side of \eq{fluid-4}, by which $\ol{L}(\cdot)$ will be uniquely determined in Step 5. Recall the drift $\gamma(u) = \lambda(u) - \mu(u) 1(u > 0)$ for $u \in \dd{R}_{+}$, and then
\begin{align}
\label{eq:UVn-yt}
 & \frac 1y \left(U(yt) - V(yt)\right) = \frac 1y \int_{0}^{yt} \gamma(y \ol{L}_{y}(s)) ds = \int_{0}^{t} \gamma(y \ol{L}_{y}(s)) ds \nonumber\\
  & \quad = \int_{0}^{t} \gamma(y \ol{L}_{y}(s)) 1(\ol{L}(s) > 0) ds + \int_{0}^{t} \gamma(y \ol{L}_{y}(s)) 1(\ol{L}(s) = 0) ds.
\end{align}

Since $\gamma_{\infty} = \lim_{y \to \infty} \gamma(y) < 0$ by (\sect{preliminaries}.e) (see ($\ast$) at the bigining of this proof) and $\ol{L}(s) > 0$ implies that $\ol{L}(s) > \varepsilon$ for some $\varepsilon > 0$, we have
\begin{align*}
 & \lim_{y \to \infty} \gamma(y \ol{L}_{y}(s)) 1(\ol{L}(s) > 0) = \gamma_{\infty} 1(\ol{L}(s) > 0) < 0.
\end{align*}
Because $\gamma(\cdot)$ is uniformly bounded, applying these limits to \eq{UVn-yt} as $y \to \infty$, we have
\begin{align*}
  \lim_{y \to \infty} \frac 1y \left(U(yt) - V(yt)\right) = \gamma_{\infty} \int_{0}^{t} 1(\ol{L}(s) > 0) ds + \ol{\gamma}_{0}(t),
\end{align*}
where $\ol{\gamma}_{0}(t) \equiv \lim_{y \to \infty} \int_{0}^{t} \gamma(y \ol{L}_{y}(s)) 1(\ol{L}(s) = 0) ds$ exists because the lefthand side exists. Hence, it follows from \eq{fluid-4} that
\begin{align}
 \label{eq:fluid-5}
 \ol{L}(t) & = 1 + \gamma_{\infty} \int_{0}^{t} 1(\ol{L}(s) > 0) ds + \ol{\gamma}_{0}(t) \ge 0, \qquad t \ge 0,
\end{align}

\noindent {\it Step 5}: We show that $\ol{L}(\cdot)$ is uniquely determined by \eq{fluid-5} and satisfies \eq{fluid-vanish}. To this end, we note that
\begin{align}
\label{eq:fluid-boundary}
  \int_{0}^{t} 1(\ol{L}(u) > 0) d\ol{\gamma}_{0}(u) = 0,
\end{align}
because
\begin{align*}
  |\ol{\gamma}_{0}(t) - \ol{\gamma}_{0}(s)| & \le  \left|\lim_{y \to \infty} \int_s^{t} \gamma(y \ol{L}_{y}(u)) 1(\ol{L}(u) = 0) \right| du \nonumber\\
  & \le |\lambda_{\sup} - \mu_{\inf}| \int_s^{t} 1(\ol{L}(u) = 0) du.
\end{align*}
Hence, $\ol{}\gamma_{0}(t)$ may have non-zero increments only when $\ol{L}(t) = 0$. Furthermore, for each $s_{0} > 0$, if $\ol{L}(s_{0}) = 0$ and if there is a $t_{1} > s_{0}$ such that $\ol{L}(t_{1}) > 0$, then there must be some $s_{1} \in (s_{0},t_{1})$ such that $\ol{L}(s_{1}) = 0$ and $\ol{L}(s) > 0$ for $s \in (s_{1},t_{1}]$ because $\ol{L}(\cdot)$ is a continuous and nonnegative function by Step 2. Under these conditions, from \eq{fluid-5},
\begin{align*}
  \ol{\gamma}_{0}(t) - \ol{\gamma}_{0}(s_{1}) = - \gamma_{\infty} \int_{s_{1}}^{t} 1(\ol{L}(s) > 0) ds + \ol{L}(t) > 0, \qquad t \in (s_{1},t_{1}].
\end{align*}
Because $\gamma_{\infty} < 0$ by (\sect{preliminaries}.e) (see ($\ast$)), this contradicts \eq{fluid-boundary}. Hence, once $\ol{L}(t)$ attains $0$ at time $t$, then $\ol{L}(s) = 0$ for all $s \ge t$. Since $\ol{L}(0) = 1$, this time $t$ equals $\delta_{L} =  1/|\gamma_{\infty}|$ by \eq{fluid-5}, and $\ol{L}(\cdot)$ is uniquely obtained as $\ol{L}(t) = (1 - |\gamma_{\infty}|t)^{+}$. Thus, \eq{fluid-vanish} and therefore \eq{fluid-L1} are proved. This completes the proof of \lem{Xn-stability}.


\subsection{Proof of \lem{h-limit}}
\label{app:h-limit}

We start to prove the first equation in \eq{limit-la-mu-n}. It follows from \eq{heavy-t-la1} in (\sect{preliminaries}.f) that there is an $n_{0} \ge 1$ for any $\varepsilon > 0$ such that
\begin{align*}
 \sup_{u \in \dd{R}_{+}} \left|\widehat{\lambda}^{(n)}(u) - \lambda(u) - n^{-1/2} \lambda^{*}(u)\right| < n^{-1/2} \varepsilon, \qquad \forall n \ge n_{0}.
\end{align*}
The triangle inequality and this yield
\begin{align*}
 \sup_{u \in \dd{R}_{+}} \left|\widehat{\lambda}^{(n)}(u) - \lambda(u)\right| & \le \sup_{u \in \dd{R}_{+}} \left|\widehat{\lambda}^{(n)}(u) - \lambda(u) - n^{-1/2} \lambda^{*}(u)\right| + \sup_{u \in \dd{R}_{+}} \left|n^{-1/2} \lambda^{*}(u)\right| \nonumber\\
 & \le n^{-1/2} \left(\varepsilon + \left|\sup_{u \in \dd{R}_{+}} \lambda^{*}(u)\right|\right) \to 0, \qquad n \to \infty,
\end{align*}
because $\lambda^{*}(u)$ is uniformly bounded in $u \in \dd{R}_{+}$. Thus, the first equation in \eq{limit-la-mu-n} is proved, and its second equation is similarly proved.

We next prove the first equation of \eq{limit-b-sg-n}. Also using the triangle formula, \eq{heavy-t-la1}, \eq{heavy-t-mu1} and $\lambda(u) = \mu(u)$ yield
\begin{align*}
 & \sup_{u \in \dd{R}_{+}}|\widehat{b}^{(n)}(u) - b(u)| = \sup_{u \in \dd{R}_{+}} \left|n^{1/2}(\widehat{\lambda}^{(n)}(u) - \widehat{\mu}^{(n)}(u)) - (\lambda^{*}(u) - \mu^{*}(u)) \right| \nonumber\\
 & \quad \le \sup_{u \in \dd{R}_{+}}\left|n^{1/2} (\widehat{\lambda}^{(n)}(u) - \lambda(u)) - \lambda^{*}(u)\right| \nonumber\\
 & \qquad + \sup_{u \in \dd{R}_{+}}\left|n^{1/2} (\widehat{\mu}^{(n)}(u) - \mu(u) ) - \mu^{*}(u)\right| \to 0, \qquad n \to \infty.
\end{align*}
Thus, the first equation in \eq{limit-b-sg-n} is proved. Its second equation is obtained by applying \eq{limit-b-sg-n} to the definitions of $\widehat{\sigma}^{(n)}(u)$ and $\sigma(u)$.

Finally, \eq{beta-limit} follows from \eq{limit-b-sg-n} because $\liminf_{n \ge 1} \inf_{u \in \dd{R}_{+}} \widehat{\sigma}^{(n)}(u) > 0$ and $\inf_{u \in \dd{R}_{+}} \sigma(u) > 0$.

\subsection{Proof of \lem{limit-hbn}}
\label{app:limit-hbn}

By (f2) in the condition (\sect{preliminaries}.f) and \eq{bsber-u}, $b(u)$ may have countably many discontinuous points which are finite in each finite interval. Denote the sequence of these discontinuous points in increasing order by $\{x_{i}; i \in \dd{N}\}$, and let $x_{0} = 0$ and $x_{\infty} = \lim_{i \to \infty}x_{i}$. Define intervals $U_{i}$ as $U_{1} = [0,x_{1}]$, $U_{i} = (x_{i-1}, x_{i}]$ for $i \ge 2$ and $U_{\infty} = (x_{\infty}, \infty)$. Note that $U_{\infty}$ is empty if $x_{\infty} = \infty$. Let $\ol{\dd{N}} = \dd{N} \cup \{\infty\}$. Obviously, $\cup_{i \in \ol{\dd{N}}} U_{i} = \dd{R}_{+}$. Since $b(\cdot)$ is uniformly continuous on each $U_{i}$ for $i \in \ol{\dd{N}}$ and bounded on $\dd{R}_{+}$, we have
\begin{align}
 \label{eq:bn-b}
 & \int_{0}^{\infty} \widehat{b}^{(n)}(u) d\nu^{(n)}(u) - \int_{0}^{\infty} b(u) d\nu(u) = \sum_{i=1}^{\infty} \left(\int_{U_{i}} \widehat{b}^{(n)}(u) d\nu^{(n)}(u) - \int_{U_{i}} b(u) d\nu(u)\right) \nonumber\\
 & \quad = \sum_{i=1}^{\infty} \left(\int_{U_{i}} (\widehat{b}^{(n)}(u) - b(u)) d\nu^{(n)}(u)\right) + \sum_{i=1}^{\infty} \left(\int_{U_{i}} b(u) d\nu^{(n)}(u) - \int_{U_{i}} b(u) d\nu(u) \right) \nonumber\\
 & \quad = \int_{0}^{\infty} (\widehat{b}^{(n)}(u) - b(u)) d\nu^{(n)}(u) + \sum_{i=1}^{\infty} \left(\int_{U_{i}} b(u) d\nu^{(n)}(u) - \int_{U_{i}} b(u) d\nu(u) \right).
\end{align}
By \lem{h-limit}, there is $n_{0} \ge 1$ for each $\varepsilon> 0$ such that $\sup_{u \in \dd{R}_{+}} |\widehat{b}^{(n)}(u) - b(u)| < \varepsilon$ for $\forall n \ge n_{0}$, and therefore
\begin{align*}
 & \left|\int_{0}^{\infty} (\widehat{b}^{(n)}(u) - b(u)) d\nu^{(n)}(u)\right| < \varepsilon, \qquad n \ge n_{0}.
\end{align*}
Similarly, since $\nu^{(n)} \os{v}{\to} \nu$ as $n \to \infty$, there are $n_{1} \ge 1$ and $y_{0}$ for each $\varepsilon> 0$ such that
\begin{align*}
 & \left| \int_{y_{0}}^{\infty} b(u) d\nu^{(n)}(u) - \int_{y_{0}}^{\infty} b(u) d\nu(u)\right| \le \sup_{u \ge y_{0}} |b(u)| {((\nu^{(n)}((y_{0},\infty)) + \nu((y_{0},\infty))} < \varepsilon \quad n \ge n_{1}.
\end{align*}
Furthermore, for each $i \in \ol{\dd{N}}$,
\begin{align*}
 & \int_{U_{i} \cap [0,y_{0}]} b(u) d\nu^{(n)}(u) - \int_{U_{i} \cap [0,y_{0}]} b(u) d\nu(u) \to 0, \qquad n \to \infty.
\end{align*}
Hence, the right-hand side of \eq{bn-b} vanishes as $n \to \infty$. This proves \eq{limit-hbn} for $x=0$ and $y=\infty$. Similarly, \eq{limit-hbn} is proved for $0 < x < y < \infty$, replacing $\widehat{b}^{(n)}(u)$ and $b(u)$ by $\widehat{b}^{(n)}(u) 1_{(x,y]}(u)$ and $b(u)1_{(x,y]}(u)$, respectively. The other cases are similarly proved for \eq{limit-hbn}. The proof for \eq{limit-hsn} is essentially the same as that of \eq{limit-hbn}.

\subsection*{Acknowledgement} This paper was originally submitted for the study of the level dependent queue. We thank the editor and two referees for their encouragement, which significantly improves the original submission \cite{KobaMiyaSaku2025} including its title. We are grateful to Rami Atar for discussions about the process and distributional limits for the multi-level queue. Yutaka Sakuma is supported by JSPS KAKENHI Grant Number JP22K11927.


\def\cprime{$'$} \def\cprime{$'$} \def\cprime{$'$} \def\cprime{$'$}
  \def\cprime{$'$} \def\cprime{$'$} \def\cprime{$'$}

\end{document}